\newcommand\bigcheck[1]{#1 \raise1ex\hbox{$\hspace{-1ex}{}^\vee$}}
\newcommand\sucheck[1]{#1 \raise0.5ex\hbox{$\hspace{-1ex}{}^\vee$}}
\newtheorem{theorem}{Theorem}[section]
\newtheorem{lemma}[theorem]{Lemma}
\newtheorem{corollary}[theorem]{Corollary}
\newtheorem{proposition}[theorem]{Proposition}
\newtheorem*{lemma*}{Lemma}
\theoremstyle{definition}
\newtheorem{definition}[theorem]{Definition}
\newtheorem{defprop}[theorem]{Definition/Proposition}
\theoremstyle{remark}
\newtheorem{remark}[theorem]{Remark}
\newcommand{\mc}[1]{{\mathcal #1}}
\newcommand{\mb}[1]{{\mathbb #1}}
\newcommand{\id}{{1 \mskip -5mu {\rm I}}}
\renewcommand{\tilde}{\widetilde}
\newcommand{\Mat}{\mathop{\rm Mat }}
\renewcommand{\ker}{\mathop{\rm Ker }}
\newcommand{\im}{\mathop{\rm Im }}
\newcommand{\sdeg}{\mathop{\rm sdeg }}
\newcommand{\ass}[1]{\xleftrightarrow[\phantom{ciao}]{#1}}
\newcommand{\assk}[2]{\xleftrightarrow[#2]{#1}}
\definecolor{light}{gray}{.9}
\begin{document}


\title{Singular degree of a rational matrix pseudodifferential operator}

\author{
Sylvain Carpentier
\thanks{Ecole Normale Superieure, 
75005 Paris, France, and M.I.T~~
sylvain.carpentier@ens.fr ~~~~},~~
Alberto De Sole
\thanks{Dipartimento di Matematica, Universit\`a di Roma ``La Sapienza'',
00185 Roma, Italy ~~
desole@mat.uniroma1.it 
},~~
Victor G. Kac
\thanks{Department of Mathematics, M.I.T.,
Cambridge, MA 02139, USA.~~
kac@math.mit.edu~~~~
Supported in part by 
the Simons Fellowship~~ 
}~~
}

\maketitle


\begin{abstract}
\noindent 
In our previous work we studied minimal fractional decompositions
of a rational matrix pseudodifferential operator:
$H=AB^{-1}$, where $A$ and $B$ are matrix differential operators,
and $B$ is non-degenerate of minimal possible degree $\deg(B)$.
In the present paper we introduce the singular degree $\sdeg(H)=\deg(B)$,
and show that for an arbitrary rational expression
$H=\sum_{\alpha}A^\alpha_1(B^\alpha_1)^{-1}\dots A^\alpha_n(B^\alpha_n)^{-1}$,
we have $\sdeg(H)\leq\sum_{\alpha,i}\deg(B^\alpha_i)$.
If the equality holds, we call such an expression minimal.
We study the properties of the singular degree and of minimal rational expressions.
These results are important for the computations 
involved in the Lenard-Magri scheme of integrability.
\end{abstract}

\section{Introduction}\label{sec:1}
\label{sec:intro}

Let $\mc K$ be a field with a derivation $\partial$ (this is called a differential field),
and let $\mc K[\partial]$ be the algebra of differential operators over $\mc K$
(with multiplication defined by the relation $\partial\circ f=\partial(f)+f\partial$).
The algebra $\mc K[\partial]$ embeds in the skewfield of pseudodifferential operators
$\mc K((\partial^{-1}))$
(with multiplication defined by the relation 
$\partial^m\circ f=\sum_{n=0}^\infty \binom{m}{n}\partial^n(f)\,\partial^{m-n}$, $m\in\mb Z$).
Denote by $\mc K(\partial)$ the subskewfield of $\mc K((\partial^{-1}))$
generated by $\mc K[\partial]$.
Elements of $\mc K(\partial)$ are called rational pseudodifferential operators.

In the present paper we continue the study of the algebra
$\Mat_{\ell\times\ell}\mc K(\partial)$
of $\ell\times\ell$ \emph{rational matrix pseudodifferential operators}
that we began in \cite{CDSK12,CDSK13a,CDSK13b}.

The first important property of the algebra $\Mat_{\ell\times\ell}\mc K[\partial]$
of matrix \emph{differential} operators,
is to be a left and right principal ideal ring,
hence one can talk about such arithmetic notions for this ring
as the left and right greatest common divisor 
and the left and right least common multiple of a collection of elements.
Using this one can deduce that 
a rational matrix pseudodifferential operator $H$
has a presentation in minimal terms,
very much like rational functions in one indeterminate over a field.
Namely, $H=AB^{-1}$,
where $A,B\in\Mat_{\ell\times\ell}\mc K[\partial]$,
$B$ is non-degenerate, i.e. invertible in $\Mat_{\ell\times\ell}\mc K(\partial)$,
and $A$ and $B$ are right coprime.
Moreover, for any other (right) fractional decomposition $H=\tilde{A}\tilde{B}^{-1}$
one has $\tilde{A}=AD$, $\tilde{B}=BD$,
where $D\in\Mat_{\ell\times\ell}\mc K[\partial]$ is non degenerate,
see \cite{CDSK13a,CDSK13b}.
In these papers 
we establish several equivalent properties of a minimal fractional decomposition $H=AB^{-1}$.
The most important for the present paper is that $\deg(B)$
(i.e. the degree of the Dieudonn\'e determinant of $B$)
is minimal among all (right) fractional decomposition of $H$.

We call $\deg(B)$ the \emph{singular degree} of the rational matrix pseudodifferential operator $H$.
It is a nonnegative integer, denoted by $\sdeg(H)$,
which is a ``non-commutative analogue'' of the number of poles (counting multiplicities)
of a rational function in one indeterminate.
We study the properties of the singular degree in some detail in Section \ref{sec:3.3a}.

It is not difficult to show (see Lemma \ref{20130705:lem2} below)
that for a collection $B_1,\dots,B_N$ of non-degenerate
$\ell\times\ell$ matrix differential operators one has
\begin{equation}\label{eq:1.1}
\deg\big(\text{l.c.m.}(B_1,\dots,B_N)\big)
\leq \deg(B_1)+\dots+\deg(B_N)
\,,
\end{equation}
where $\text{l.c.m.}$ denotes the left (resp. right) least common multiple.
These matrix differential operators
are called \emph{strongly left} (resp. \emph{right}) \emph{coprime}
if equality holds in \eqref{eq:1.1}.
This property implies pairwise coprimeness (see Proposition \ref{20130705:lem1}),
but it is stronger for $N\geq3$ (see Remark \ref{20130715:rem1}).
The main theorem on strong coprimeness says that
for strongly left coprime non-degenerate matrix differential
operators $B_1,\dots,B_N\in\Mat_{\ell\times\ell}\mc K[\partial]$
and vectors $F_1,\dots,F_N\in\mc K^\ell$
solving the equations $B_1F_1=\dots=B_NF_N$
there exists $F\in\mc K^\ell$ such that $F_i=C_iF$, $i=1,\dots,N$,
where $B_1C_1=\dots=B_NC_N$ is the right l.c.m. of $B_1,\dots,B_N$
(see Theorem \ref{20130705:thm}).
This result (which was proved for $N=2$ in \cite{CDSK13a})
plays an important role in our theory of minimal rational expressions.

A rational matrix pseudodifferential operator usually
comes in the form of a rational expression
\begin{equation}\label{20130611:eq1-intro}
H=\sum_{\alpha\in\mc A}
A^{\alpha}_1(B^\alpha_1)^{-1}\dots
A^{\alpha}_n(B^\alpha_n)^{-1}
\,,
\end{equation}
where 
$A^\alpha_i,B^\alpha_i\in\Mat_{\ell\times\ell}\mc K[\partial]$, $i\in\mc I=\{1,\dots,n\},\alpha\in\mc A$,
($\mc A$ is a finite index set),
and the $B^\alpha_i$'s are non-degenerate.
It is natural to ask what it means for such an expression to be in its ``minimal'' form,
and in the present paper we propose the following answer to this question.
We prove that, in general,
\begin{equation}\label{20130719:eq1}
\sdeg(H)\leq\sum_{i\in\mc I,\alpha\in\mc A}\deg(B^\alpha_i)
\,,
\end{equation}
(see Lemma \ref{20130703:lem1}),
and we say that the rational expression \eqref{20130611:eq1-intro} is \emph{minimal}
if equality holds in \eqref{20130719:eq1}.

In general, it is not easy to compute the singular degree 
of a rational expression \eqref{20130611:eq1-intro}.
One of our main results is Theorem \ref{20130801:thm},
which gives a better upper bound than \eqref{20130719:eq1},
and a lower bound, for the singular degree of $H$.
These upper and lower bounds become equal, 
thus giving an effective formula for $\sdeg(H)$,
if either the space $\mc E$ in \eqref{20130801:eq2},
or the space $\mc E^*$ in \eqref{20130801:eq3}, is zero.
As a consequence of these results,
we get, in Corollary \ref{20130801:cor},
an effective way to check when a rational expression \eqref{20130611:eq1-intro}
is minimal:
this happens if and only if both spaces $\mc E$ and $\mc E^*$ are zero.

One of the main goals of the paper is to demonstrate that this definition of minimality is the
right generalization of the minimality of a fractional decomposition.
A rational matrix pseudodifferential operator $H\in\Mat_{\ell\times\ell}\mc K(\partial)$
does not define a function $\mc K^\ell\ni\xi\mapsto P=H(\xi)\in\mc K^\ell$.
It is natural instead to define the following \emph{association relation}:
if $H$ has a rational expression as in \eqref{20130611:eq1-intro},
we reinterpret the equation ``$P=H(\xi)$''
via the association relation $\xi\ass{H}P$,
meaning that there exist 
$F^\alpha_i\in\mc K^\ell$ ($i=1,\dots,n,\,\alpha\in\mc A$) such that
\begin{equation}\label{20130611:eq2-intro}
\begin{array}{l}
\displaystyle{
\vphantom{\Big(}
\xi=B^\alpha_nF^\alpha_n
\,\,\,\,\text{ for all } \alpha \in\mc A
\,,} \\
\displaystyle{
\vphantom{\Big(}
A^\alpha_iF^\alpha_i
=B^\alpha_{i-1}F^\alpha_{i-1}
\,\,\,\,\text{ for all } 1\neq i\in\mc I\,,\alpha \in\mc A
\,,} \\
\displaystyle{
\vphantom{\Big(}
\sum_{\alpha\in\mc A} A^\alpha_1F^\alpha_1=P
\,.}
\end{array}
\end{equation}
Such association relation is a generalization 
of the $H$-association relation introduced in \cite{DSK13}.
It plays a crucial role in the theory of Hamiltonian equations,
and it is needed to develop the Lenard-Magri scheme of integrability
for a compatible pair of non-local Poisson structures
(written in the form of a general rational expression, as in \eqref{20130611:eq1-intro}).
Theorem \ref{20130704:prop},
which is our second main result,
says, in particular, that the association relation 
$\xi\ass{H}P$
is independent of the minimal rational expression \eqref{20130611:eq1-intro} for $H$.

This paper was written while the second and the third author were visiting IHES, France,
which we thank for the hospitality.

\section{Matrix differential operators and their degree}\label{sec:2}

\subsection{Matrix differential and pseudodifferential operators
and the Dieudonn\'e determinant
}\label{sec:2.1}

Let $\mathcal{K}$ be a differential field of characteristic $0$, with a 
derivation $\partial$, and let $\mathcal{C}=\ker\partial$ be the subfield of constants.
Consider the algebra ${\mathcal{K}}[\partial]$ (over $\mathcal{C}$) of 
differential operators with coefficients in $\mc K$. 
It is a subalgebra of the skewfield ${\mathcal{K}}(({\partial}^{-1}))$ 
of pseudodifferential operators with coefficients in $\mc K$.
Given $\ell\geq1$, we consider the algebra
$\Mat{}_{\ell\times\ell}\mc K[\partial]$
of $\ell\times\ell$ matrix differential operators with coefficients in $\mc K$.
It is a subalgebra of
$\Mat{}_{\ell\times\ell}\mc K((\partial^{-1}))$,
the algebra of $\ell\times\ell$ matrix pseudodifferential operators with coefficients in $\mc K$.

By definition, the \emph{Dieudonn\'e determinant} 
of $A \in \Mat_{\ell\times\ell}{\mathcal{K}}(({\partial}^{-1}))$ has the 
form $\det(A)={\det_1(A)}{{\xi}^{\deg(A)}}$ where $\det_1(A) \in \mathcal{K}$, 
$\xi$ is an indeterminate, and $\deg(A) \in \mathbb{Z}$. 
It exists and is 
uniquely defined by the following properties (see [Die43], [Art57]) :
\begin{enumerate}[$(i)$]
\item
$\det(AB)=\det(A)\det(B)$;
\item
if $A$ is upper triangular with non-zero diagonal entries 
$A_{ii} \in {\mathcal{K}}(({\partial}^{-1}))$ of degree (or order) $\deg(A_{ii})\in\mb Z$ and 
leading coefficient $a_i \in \mathcal{K}$, then 
$$
\det\nolimits_{1}(A)= \prod_{i=1}^{n} a_i \,\in\mc K
\,\,,\,\,\,\,
\deg(A)= \sum_{i=1}^{n} \deg(A_{ii})\,\in\mb Z
\,,
$$
and $\det(A)=0$ if one of the $A_{ii}$ is $0$.
\end{enumerate}
%
%

\begin{remark}\label{rem:3.3}
Let $A \in \Mat_{\ell\times\ell}{\mathcal{K}}((\partial^{-1}))$ 
and let $A^*$ be the adjoint matrix pseudodifferential operator.
If $det(A)=0$, then $det(A^*)=0$.
If $det(A) \neq 0$, then $det(A^*)={(-1)}^{\deg(A)}det(A)$.
\end{remark}

\subsection{Degree of a non-degenerate matrix}\label{sec:2.3}

A matrix $A\in\Mat{}_{\ell\times\ell}\mc K((\partial^{-1}))$  
whose Dieudonn\'e determinant is non-zero is called 
\emph{non-degenerate}. 
In this case the integer $\deg(A)$ is well defined. 
\begin{definition}\label{def:degree}
The \emph{degree} 
of a non-degenerate matrix pseudodifferential operator 
$A\in\Mat{}_{\ell\times\ell}\mc K((\partial^{-1}))$
is the integer $\deg(A)$.
\end{definition}
By the multiplicativity of the Dieudonn\'e determinant, 
we have that $\deg(AB)=\deg(A)+\deg(B)$ if both $A$ and $B$ are non-degenerate.

\begin{proposition}[\cite{CDSK13b}]
\label{cor:3.2} 
Let $A \in \Mat_{\ell\times\ell}{\mathcal{K}}[\partial]$ 
be a non-degenerate matrix differential operator. 
Then 
\begin{enumerate}[(a)]
\item
$\deg(A) \in {\mathbb{Z}}_{+}$.
\item
$A$ is an invertible element of  
$\Mat_{\ell\times\ell}{\mathcal{K}}[\partial]$
if and only if $A$ is non-degenerate and $\deg(A)=0$.
\end{enumerate}
\end{proposition}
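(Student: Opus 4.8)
The plan is to reduce both statements to the existence of a triangular form for $A$ over the Ore skewfield $\mc K((\partial^{-1}))$, together with the valuation-like behavior of the order function on $\mc K((\partial^{-1}))$. Recall that $\mc K((\partial^{-1}))$ is a (noncommutative) Euclidean domain with respect to the order (degree in $\partial^{-1}$), so by the usual Bruhat/Gaussian elimination over a skewfield one can write $A = U T V$, where $U, V \in \Mat_{\ell\times\ell}\mc K[\partial]$ are products of elementary matrices (row/column operations) and permutation matrices, and $T$ is upper triangular with diagonal entries $T_{ii} \in \mc K((\partial^{-1}))$. Since $A$ is non-degenerate, all $T_{ii}$ are nonzero. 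By property $(i)$ of the Dieudonn\'e determinant, $\deg(A) = \deg(U) + \deg(T) + \deg(V)$; and because the elementary and permutation matrices used for $U, V$ are invertible in $\Mat_{\ell\times\ell}\mc K[\partial]$, hence have degree $0$ (a fact one can check directly, or take as the easy half of $(b)$), we get $\deg(A) = \deg(T) = \sum_i \deg(T_{ii})$ by property $(ii)$.

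For part $(a)$, the point is that $A \in \Mat_{\ell\times\ell}\mc K[\partial]$ means $A$ acts on $\mc K((\partial^{-1}))^\ell$ preserving the ``polynomial part'' in an appropriate sense; more concretely, I would argue that if some $\deg(T_{ii}) < 0$ then one can produce a nonzero column vector $F$ in $\mc K[\partial]^\ell$ (or even in $\mc K^\ell$ after clearing) with $AF$ having strictly smaller order than forced, contradicting that $A$ has differential (non-negative order) entries — or, cleaner, invoke the fact established in \cite{CDSK13b} that for $A \in \Mat_{\ell\times\ell}\mc K[\partial]$ non-degenerate one has $\dim_{\mc C}\ker(A|_{\mc K^\ell}) = \deg(A)$, which is manifestly $\geq 0$. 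Given the citation to \cite{CDSK13b} in the statement, the intended proof almost certainly just quotes such a kernel-dimension formula (or the Smith normal form over the principal ideal ring $\Mat_{\ell\times\ell}\mc K[\partial]$): by Smith normal form $A = U D V$ with $U, V$ invertible in $\Mat_{\ell\times\ell}\mc K[\partial]$ and $D$ diagonal with entries $d_i \in \mc K[\partial]$, so $\deg(A) = \sum_i \deg(d_i) \geq 0$ since each $d_i$ is a genuine differential operator, nonzero by non-degeneracy, hence of order $\geq 0$.

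For part $(b)$: if $A$ is invertible in $\Mat_{\ell\times\ell}\mc K[\partial]$ with inverse $A^{-1} \in \Mat_{\ell\times\ell}\mc K[\partial]$, then $\deg(A) + \deg(A^{-1}) = \deg(I) = 0$ with both summands in $\mb Z_+$ by part $(a)$, forcing $\deg(A) = 0$; and $A$ is certainly non-degenerate since $\det(A)\det(A^{-1}) = 1 \neq 0$. Conversely, if $A$ is non-degenerate with $\deg(A) = 0$, take the Smith form $A = U D V$ as above; then $\sum_i \deg(d_i) = 0$ with each $\deg(d_i) \geq 0$, so every $d_i$ has order $0$, i.e. $d_i \in \mc K^\times$, so $D$ is invertible in $\Mat_{\ell\times\ell}\mc K[\partial]$, whence $A = UDV$ is too. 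The main obstacle is purely one of bookkeeping: making sure the auxiliary matrices (elementary/permutation or the unimodular factors in Smith normal form) are genuinely invertible inside $\Mat_{\ell\times\ell}\mc K[\partial]$ and therefore contribute degree $0$ — this is exactly the ``easy direction'' of $(b)$, so one must take care not to argue circularly; I would handle it by noting that a product of elementary and permutation matrices has an explicit two-sided inverse of the same type, independently of any determinant considerations.
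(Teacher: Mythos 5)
The paper itself offers no proof of this proposition: it is quoted from \cite{CDSK13b}, so there is no internal argument to compare against. Your proof is correct and is essentially the standard argument of that reference: reduce $A$ to triangular/diagonal form over $\mc K[\partial]$ (a left and right Euclidean ring) by factors invertible in $\Mat_{\ell\times\ell}\mc K[\partial]$, hence of degree $0$, use multiplicativity of the Dieudonn\'e determinant, and observe that the resulting diagonal entries are nonzero elements of $\mc K[\partial]$ and so have non-negative order; part (b) then follows exactly as you argue. Two minor corrections: carry out the elimination over $\mc K[\partial]$ via the Euclidean (division) algorithm so that the diagonal entries $T_{ii}$ genuinely lie in $\mc K[\partial]$ rather than merely in $\mc K((\partial^{-1}))$ (elimination over the skewfield would put the auxiliary factors outside $\Mat_{\ell\times\ell}\mc K[\partial]$), and the kernel-dimension formula you invoke as an alternative holds over the linear closure $\mc L$ (Corollary \ref{DSK11:prop}), not over $\mc K$ itself, where one only has an inequality.
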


\subsection{Right and left least common multiple}\label{sec:2.3b}

Recall the following result.
\begin{lemma}[\cite{CDSK13b}]
\label{20130703:lem2}
Let $A,B\in \Mat_{\ell\times\ell}\mc K[\partial]$
be matrix differential operators,
and assume that $B$ is non-degenerate.
\begin{enumerate}[(a)]
\item
There exist a \emph{right least common multiple}
$$
\text{right l.c.m.}(A,B)=A\widetilde{B}=B\widetilde{A}\,,
$$
with $\widetilde{A},\widetilde{B}\in\Mat_{\ell\times\ell}\mc K[\partial]$,
and $\widetilde{B}$ non-degenerate,
such that $\widetilde{A}$ and $\widetilde{B}$ are right coprime.
We have $\deg(\widetilde{B})\leq\deg(B)$,
and equality holds if and only if $A$ and $B$ are left coprime.
\item
There exist a \emph{left least common multiple}
$$
\text{left l.c.m.}(A,B)=\widetilde{B}A=\widetilde{A}B\,,
$$
with $\widetilde{A},\widetilde{B}\in\Mat_{\ell\times\ell}\mc K[\partial]$,
and $\widetilde{B}$ non-degenerate,
such that $\widetilde{A}$ and $\widetilde{B}$ are left coprime.
We have $\deg(\widetilde{B})\leq\deg(B)$,
and equality holds if and only if $A$ and $B$ are right coprime.
\end{enumerate}
\end{lemma}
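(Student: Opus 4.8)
The plan is to derive everything from the principal right/left ideal ring structure of $R:=\Mat_{\ell\times\ell}\mc K[\partial]$, the multiplicativity of the Dieudonn\'e determinant, and Proposition~\ref{cor:3.2}. It suffices to prove part~(a): part~(b) follows by applying~(a) to the adjoints $A^*,B^*$, since (Remark~\ref{rem:3.3}) the map $X\mapsto X^*$ preserves non-degeneracy and $\deg$, and is an anti-automorphism of $R$, hence interchanges left and right least common multiples, left and right coprimeness, and left and right divisibility.

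\emph{Existence.} The common right multiples of $A$ and $B$ form the right ideal $\mc J:=AR\cap BR$ of $R$. Since $B$ is non-degenerate, $B^{-1}A$ lies in $\Mat_{\ell\times\ell}\mc K(\partial)$ and, by the right Ore property of $R$, admits a right fractional decomposition $B^{-1}A=PQ^{-1}$ with $P,Q\in R$ and $Q$ non-degenerate; then $AQ=BP\in\mc J$, so $\mc J\neq0$ unless $A=0$. As $R$ is a principal right ideal ring, $\mc J=MR$; write $M=A\widetilde B=B\widetilde A$. Every common right multiple lies in $MR$, so $M$ is the right l.c.m.

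\emph{The case $A$ non-degenerate.} Let $D$ be a greatest common left divisor of $A$ and $B$, so $AR+BR=DR$; since $D$ left-divides $B$, it is non-degenerate. From $MR=AR\cap BR$ and $AR+BR=DR$ one gets the short exact sequence of right $R$-modules
\[
0\longrightarrow R/MR\longrightarrow R/AR\oplus R/BR\longrightarrow R/DR\longrightarrow 0,
\]
with maps $r+MR\mapsto(r+AR,r+BR)$ and $(a+AR,b+BR)\mapsto(a-b)+DR$. For non-degenerate $X$ the module $R/XR$ is finite-dimensional over $\mc K$, and $\dim_{\mc K}(R/XR)$ depends on $X$ only through $\deg(X)$, to which it is proportional (in fact $\dim_{\mc K}(R/XR)=\ell\deg(X)$; e.g.\ via the diagonal normal form of $X$ over $R$), while $R/XR$ is infinite-dimensional if $X$ is degenerate. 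Hence, $A$ being non-degenerate, $R/AR$ and $R/BR$ are finite-dimensional, so $R/MR$ is, so $M$ is non-degenerate, and then $\widetilde B$ is non-degenerate too (if $\det\widetilde B=0$ then $\det M=\det(A)\det(\widetilde B)=0$). Applying $\dim_{\mc K}$ to the exact sequence gives $\deg(M)+\deg(D)=\deg(A)+\deg(B)$; with $\deg(M)=\deg(A)+\deg(\widetilde B)$ this yields $\deg(\widetilde B)=\deg(B)-\deg(D)$. By Proposition~\ref{cor:3.2}(a) $\deg(D)\ge0$, so $\deg(\widetilde B)\le\deg(B)$, with equality iff $\deg(D)=0$, iff $D$ is invertible (Proposition~\ref{cor:3.2}(b)), iff $A$ and $B$ are left coprime. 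Right coprimeness of $\widetilde A,\widetilde B$: if $D'$ is a common right divisor, write $\widetilde A=\widetilde A'D'$, $\widetilde B=\widetilde B'D'$; then $M':=A\widetilde B'=B\widetilde A'$ is a common right multiple, so $M'=ME$ for some $E$ and $M=M'D'=MED'$; cancelling the non-degenerate $M$ gives $ED'=I$, whence $\deg(D')=0$ and $D'$ is invertible.

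\emph{The case $A$ degenerate; the main obstacle.} Write $A=DA_1$, $B=DB_1$ with $D$ a greatest common left divisor; then $D,B_1$ are non-degenerate and $A_1,B_1$ are left coprime. Cancelling $D$ identifies the common right multiples of $A,B$ with $D$ times those of $A_1,B_1$, so the right l.c.m.\ of $A,B$ is $D$ times that of $A_1,B_1$, the same $\widetilde A,\widetilde B$ serve both pairs, and the assertion reduces to showing, for the left-coprime pair $(A_1,B_1)$, that $\widetilde A,\widetilde B$ can be chosen with $\widetilde B$ non-degenerate and $\widetilde A,\widetilde B$ right coprime, and that then $\deg(\widetilde B)=\deg(B_1)$. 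If $A_1$ is non-degenerate this is the previous case. The residual situation, $A_1$ degenerate, is the main obstacle: the dimension count fails since $R/A_1R$ is infinite-dimensional, so one must argue directly on the affine set $\{\widetilde B:A_1\widetilde B=M_1\}$ (where $M_1=B_1\widetilde A$ is the right l.c.m.\ of $A_1,B_1$), showing that it contains a non-degenerate member right coprime to $\widetilde A$, and that right coprimeness singles out precisely the members of least degree, which is $\deg(B_1)$. I expect this to be the only place needing input beyond the principal ideal ring formalism; everything else is routine.
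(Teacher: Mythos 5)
This lemma is quoted in the paper from \cite{CDSK13b} without proof, so there is no in-paper argument to compare with; judged on its own terms, your proposal is incomplete. What you do prove is correct: the reduction of (b) to (a) by adjoints; the existence of the right l.c.m.\ as a generator $M=A\widetilde B=B\widetilde A$ of the principal right ideal $AR\cap BR$ (nonzero by the Ore property), where $R=\Mat_{\ell\times\ell}\mc K[\partial]$; and, when $A$ is non-degenerate, the exact sequence $0\to R/MR\to R/AR\oplus R/BR\to R/DR\to 0$ together with $\dim_{\mc K}(R/XR)=\ell\deg(X)$, which yields $\deg(\widetilde B)=\deg(B)-\deg(D)$, the equality criterion, and the right coprimality of $\widetilde A,\widetilde B$. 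The genuine gap is the case you yourself flag and leave open: after splitting off the greatest common left divisor, $A=DA_1$, $B=DB_1$, the reduced factor $A_1$ may still be degenerate; there your dimension count breaks down, and you offer only a programme (``argue directly on the affine set $\{\widetilde B:\,A_1\widetilde B=M_1\}$''), not a proof. This case cannot be waived: the lemma is stated for arbitrary $A$ and is used in this paper in exactly that generality (for instance in Lemma \ref{20130801:lem}, and in the proof of Theorem \ref{20130801:thm}, where one takes the right l.c.m.\ of a non-degenerate operator and an operator $A^\alpha_n$ that may well be degenerate). Moreover, when $A_1$ is degenerate the cofactor $\widetilde B$ is determined by the l.c.m.\ only modulo the nonzero right annihilator of $A_1$ in $R$, so both the existence of a non-degenerate, right coprime choice of $\widetilde B$ and the identity $\deg(\widetilde B)=\deg(B_1)$ genuinely require an argument.

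For what it is worth, the missing case can be settled inside your own framework by a perturbation. Choose $N$ larger than the orders of the entries of $B^{-1}A$ and set $A'=A+B\,\partial^N\id$. Then $A'=B\big((\id+(B^{-1}A)\partial^{-N})\,\partial^N\id\big)$ is non-degenerate, because $\id+(B^{-1}A)\partial^{-N}$ is invertible in $\Mat_{\ell\times\ell}\mc K((\partial^{-1}))$. On the other hand the substitution $A\mapsto A'$ changes nothing relevant: $AX\in BR$ if and only if $A'X\in BR$, the cofactor pairs correspond via $(X,Y)\leftrightarrow(X,Y+\partial^N X)$ and so have the same common right divisors, and the common left divisors of $(A,B)$ and of $(A',B)$ coincide. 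Hence your non-degenerate case applied to $(A',B)$ produces $\widetilde B$, and transporting back gives the right l.c.m.\ $A\widetilde B=B\widetilde A$ of the original pair with all the asserted properties, including $\deg(\widetilde B)=\deg(B)-\deg(D)\le\deg(B)$ with equality precisely when $A$ and $B$ are left coprime.
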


In Section \ref{sec:4} we will need the following generalization of Lemma \ref{20130703:lem2}. 
\begin{lemma}\label{20130801:lem}
Let $A_i,B_i\in\Mat_{\ell\times\ell}\mc K[\partial]$, $i=1,\dots,n$,
where the $B_i$'s are non-degenerate.
Then there exist $X_1,\dots,X_n\in\Mat_{\ell\times\ell}\mc K[\partial]$,
with $X_n$ non-degenerate,
such that
\begin{equation}\label{20130801:eq1}
B_iX_i=A_{i+1}X_{i+1}
\text{ for all } i=1,\dots,n-1
\,.
\end{equation}
In this case, we have the following identity of rational matrix pseudodifferential operators
(see Section \ref{sec:3.1}):
\begin{equation}\label{20130801:eq1b}
A_1B_1^{-1}\dots A_nB_n^{-1}=(A_1X_1)(B_nX_n)^{-1}
\,.
\end{equation}
\end{lemma}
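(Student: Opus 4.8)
The plan is to construct the matrices $X_1,\dots,X_n$ by a downward induction on $i$, building them from repeated applications of the right least common multiple construction of Lemma \ref{20130703:lem2}(a). The point is that the condition \eqref{20130801:eq1} couples consecutive indices in one direction only: $B_iX_i = A_{i+1}X_{i+1}$ expresses $X_i$ in terms of $X_{i+1}$ via a l.c.m.\ relation. So I would start at the top index. Set $X_n = \id$ (the identity), which is non-degenerate, so the last requirement is automatically met; there is nothing yet to satisfy since the constraints \eqref{20130801:eq1} run over $i=1,\dots,n-1$. Then I would work downward: having already chosen $X_{i+1},\dots,X_n$, I need $X_i$ with $B_iX_i = A_{i+1}X_{i+1}$. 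Writing $A_{i+1}X_{i+1}=:C$, this asks for $X_i$ and some $Y$ with $B_iX_i = CY = $ right l.c.m.$(B_i, C)$; by Lemma \ref{20130703:lem2}(a), applied with the roles $A\rightsquigarrow C$, $B\rightsquigarrow B_i$ (noting $B_i$ is non-degenerate), such a right l.c.m.\ $B_i\widetilde{B_i} = C\widetilde{C}$ exists with $\widetilde{B_i}$ non-degenerate. So I take $X_i := \widetilde{B_i}$, which is non-degenerate. Induction then produces all of $X_1,\dots,X_n$ with $X_n$ (indeed each $X_i$ for $i\ge 2$, and also $X_1$ in this construction) non-degenerate, which is more than the claimed $X_n$ non-degenerate.

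Once the $X_i$ are in hand, I would turn to the identity \eqref{20130801:eq1b} of rational matrix pseudodifferential operators. Here each $B_iX_i$ and $B_nX_n$ is non-degenerate (product of non-degenerate matrices), so all the inverses appearing make sense in $\Mat_{\ell\times\ell}\mc K(\partial)$, and the computation is purely algebraic in the skewfield $\Mat_{\ell\times\ell}\mc K((\partial^{-1}))$. The key algebraic observation is that from $B_iX_i = A_{i+1}X_{i+1}$ one gets, multiplying on the left by $A_{i+1}^{?}$... more precisely, $A_{i+1}B_{i+1}^{-1}\cdots$ should telescope. I would proceed as follows: starting from the right end, $(B_nX_n)^{-1} = X_n^{-1}B_n^{-1}$, so $A_nB_n^{-1}(B_nX_n)^{-1}$... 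Actually the cleanest route is to verify, by a finite induction on $k$ from $k=n$ down to $k=1$, the identity
\begin{equation*}
A_kB_k^{-1}A_{k+1}B_{k+1}^{-1}\cdots A_nB_n^{-1} = (A_kX_k)(B_nX_n)^{-1},
\end{equation*}
where for $k=n$ this reads $A_nB_n^{-1} = (A_nX_n)(B_nX_n)^{-1}$, which holds since $X_n$ is non-degenerate and $(A_nX_n)(B_nX_n)^{-1} = A_nX_nX_n^{-1}B_n^{-1} = A_nB_n^{-1}$. For the inductive step, assume the identity for $k+1$; then
\begin{equation*}
A_kB_k^{-1}\big(A_{k+1}X_{k+1}\big)(B_nX_n)^{-1}
= A_kB_k^{-1}\big(B_kX_k\big)(B_nX_n)^{-1}
= A_kX_k(B_nX_n)^{-1},
\end{equation*}
using \eqref{20130801:eq1} in the form $A_{k+1}X_{k+1}=B_kX_k$ and then cancelling $B_k^{-1}B_k$. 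Taking $k=1$ gives \eqref{20130801:eq1b}.

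I expect no serious obstacle here; the argument is a straightforward iterated use of the binary l.c.m.\ lemma plus a telescoping cancellation. The only points requiring a little care are: (1) making sure at each stage that the matrix $C = A_{i+1}X_{i+1}$ fed into Lemma \ref{20130703:lem2}(a) is paired with the non-degenerate operator $B_i$ (so that part (a) applies — it requires only the ``$B$'' argument to be non-degenerate, and indeed $B_i$ is), and tracking that the resulting $X_i$ is non-degenerate (it is, being the ``$\widetilde B$'' of a l.c.m.\ of a non-degenerate operator); and (2) in the rational-operator identity, justifying that all inverses are taken in $\Mat_{\ell\times\ell}\mc K(\partial)$ of genuinely non-degenerate operators and that left multiplication/cancellation is legitimate in the skewfield $\Mat_{\ell\times\ell}\mc K((\partial^{-1}))$ — both of which follow from multiplicativity of the Dieudonné determinant together with Proposition \ref{cor:3.2}. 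If one wants the sharper statement that every $X_i$ (not merely $X_n$) can be taken non-degenerate, the construction above already delivers it; the weaker form in the statement suffices for the applications in Section \ref{sec:4}.
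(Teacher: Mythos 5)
Your verification of the identity \eqref{20130801:eq1b} by telescoping is fine and is essentially the paper's argument; the problem is in the construction of the $X_i$'s. The downward induction as you set it up does not produce matrices satisfying \eqref{20130801:eq1}. The requirement at step $i$ is $B_iX_i=A_{i+1}X_{i+1}$ with $X_{i+1}$ \emph{already fixed}, i.e.\ $B_iX_i=C$ where $C=A_{i+1}X_{i+1}$; replacing this by ``$B_iX_i=CY$ for some $Y$'' and invoking the right l.c.m.\ of $B_i$ and $C$ changes the equation, and you never account for the extra factor $Y$. Already for $n=2$ your scheme sets $X_2=\id$ and then demands $B_1X_1=A_2$, i.e.\ that $B_1$ left-divides $A_2$, which is false in general. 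The construction can be repaired: at step $i$ write the right l.c.m.\ as $B_i\widetilde{C}=C\,Y$ (with $Y$ non-degenerate, since $B_i$ is), set $X_i:=\widetilde{C}$, and \emph{right-multiply all previously built $X_{i+1},\dots,X_n$ by $Y$}; this preserves the relations already established (both sides get the same right factor) and keeps $X_n$ non-degenerate as a product of non-degenerate factors. With this correction your backward recursion becomes the mirror image of the paper's proof, which runs forward: it takes the right l.c.m.\ of $B_1$ and $A_2$, then of $B_2\widetilde{B}_1$ and $A_3$, etc., and absorbs the propagation of correction factors by defining each $X_i$ at the end as a product $\widetilde{B}_{i-1}\widetilde{A}_{i+1}\cdots\widetilde{A}_n$.

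Two smaller points. First, you attach non-degeneracy to the wrong cofactor in Lemma \ref{20130703:lem2}(a): with inputs $A$ and non-degenerate $B$, the l.c.m.\ is $A\widetilde{B}=B\widetilde{A}$ and it is $\widetilde{B}$ (the factor multiplying $A$) that is non-degenerate, not the factor multiplying $B$. Consequently your claim that the construction makes \emph{all} $X_i$ non-degenerate is false in general: if some $A_{i+1}$ is degenerate (the lemma does not assume the $A_i$'s non-degenerate; e.g.\ $A_{i+1}=0$ forces $X_i=0$), the factor you call $X_i$ is degenerate. This is harmless for the statement, which only needs $X_n$ non-degenerate, and indeed in \eqref{20130801:eq1b} only $B_1,\dots,B_n$ and $B_nX_n$ are inverted, so your remark that each $B_iX_i$ is non-degenerate is both unnecessary and, in degenerate cases, untrue.
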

\begin{proof}
Let, by Lemma \ref{20130703:lem2}(a),
$$
B_1\widetilde{A}_2=A_2\widetilde{B}_1
\,,
$$
be the right least common multiple of $B_1$ and $A_2$,
with $\widetilde{A}_2,\widetilde{B}_1\in\Mat_{\ell\times\ell}\mc K[\partial]$
and $\widetilde{B}_1$ non-degenerate.
Let
$$
B_2\widetilde{B}_1\widetilde{A}_2=A_3\widetilde{B}_2
\,,
$$
be the right least common multiple of $B_2\widetilde{B}_1$ and $A_3$,
where $\widetilde{A}_3,\widetilde{B}_3$ lie in $\Mat_{\ell\times\ell}\mc K[\partial]$
and $\widetilde{B}_3$ is non-degenerate.
After repeating the same procedure several times, we finally let
$$
B_{n-1}\widetilde{B}_{n-2}\widetilde{A}_n=A_n\widetilde{B}_{n-1}
\,,
$$
be the right least common multiple of $B_{n-1}\widetilde{B}_{n-2}$ and $A_n$,
with $\widetilde{A}_n,\widetilde{B}_{n-1}\in\Mat_{\ell\times\ell}\mc K[\partial]$
and $\widetilde{B}_{n-1}$ non-degenerate.
Equation \eqref{20130801:eq1} then holds letting
$$
\begin{array}{l}
\displaystyle{
\vphantom{\Big(}
X_1=\widetilde{A}_2\widetilde{A}_3\dots\widetilde{A}_{n-1}\widetilde{A}_n
\,,\,\,
X_2=\widetilde{B}_1\widetilde{A}_3\dots\widetilde{A}_{n-1}\widetilde{A}_n
\,,\dots\,,} \\
\displaystyle{
\vphantom{\Big(}
X_{n-2}=\widetilde{B}_{n-3}\widetilde{A}_{n-1}\widetilde{A}_n
\,,\,\,
X_{n-1}=\widetilde{B}_{n-2}\widetilde{A}_n
\,\text{ and }\,
X_n=\widetilde{B}_{n-1}
\,.}
\end{array}
$$
The last claim is immediate since, by \eqref{20130801:eq1},
we have
$$
\begin{array}{l}
\displaystyle{
\vphantom{\Big(}
B_{n-1}^{-1}A_n=X_{n-1}X_n^{-1}
\,,\,\,
B_{n-2}^{-1}A_{n-1}X_{n-1}=X_{n-2}
\,,} \\
\displaystyle{
\vphantom{\Big(}
B_{n-3}^{-1}A_{n-2}X_{n-2}=X_{n-3}
\,\dots\,
B_1^{-1}A_2X_2=X_1
\,.}
\end{array}
$$
\end{proof}

Given an arbitrary finite number of non-degenerate matrix differential operators 
$B^1,\dots,B^N\in \Mat_{\ell\times\ell}\mc K[\partial]$,
we can consider their right (resp. left) least common multiple
$$
\begin{array}{l}
\displaystyle{
\vphantom{\Big(}
\text{right l.c.m.}
(B^1,\dots,B^N)=
B^1C^1=\dots=B^NC^N
\,,} \\
\displaystyle{
\vphantom{\Big(}
\quad \Big(\text{ resp. }\,\,
\text{left l.c.m.}
(B^1,\dots,B^N)=
C^1B^1=\dots=C^NB^N
\,\Big)
\,.}
\end{array}
$$
It can be defined as the generator of the intersection of the right (resp. left)
principal ideals in $\Mat_{\ell\times\ell}\mc K[\partial]$ generated by $B^1,\dots,B^N$.
Equivalently, it is given inductively by
(here l.c.m. means right (resp. left) l.c.m.):
\begin{equation}\label{20130705:eq5}
\text{l.c.m.}
(B^1,\dots,B^N)=
\text{l.c.m.}
\big(\text{l.c.m.}(B^1,\dots,B^{N-1}),B^N\big)
\,.
\end{equation}
\begin{lemma}\label{20130705:lem2}
Let $B^1,\dots,B^N\in \Mat_{\ell\times\ell}\mc K[\partial]$ 
be non-degenerate matrix differential operators.
We have
$$
\deg\big(\emph{right (resp. left) l.c.m.}(B^1,\dots,B^N)\big)
\leq
\deg(B^1)+\dots+\deg(B^N)
\,.
$$
\end{lemma}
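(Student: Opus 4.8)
The plan is to prove the inequality by induction on $N$, using the inductive characterization \eqref{20130705:eq5} of the l.c.m.\ to reduce everything to the case $N=2$. The base case $N=1$ is trivial. For the inductive step, write $L_{N-1}=\text{l.c.m.}(B^1,\dots,B^{N-1})$; by the inductive hypothesis we have $\deg(L_{N-1})\leq\deg(B^1)+\dots+\deg(B^{N-1})$, so it suffices to establish the two-operator estimate
$$
\deg\big(\text{l.c.m.}(L_{N-1},B^N)\big)\leq\deg(L_{N-1})+\deg(B^N)
$$
and then combine the two inequalities. Thus the entire content of the lemma is concentrated in the case $N=2$.

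For $N=2$, say we want the right l.c.m.\ of two non-degenerate operators $A,B\in\Mat_{\ell\times\ell}\mc K[\partial]$. First I would invoke Lemma \ref{20130703:lem2}(a), which furnishes $\widetilde A,\widetilde B\in\Mat_{\ell\times\ell}\mc K[\partial]$ with $\widetilde B$ non-degenerate such that $\text{right l.c.m.}(A,B)=A\widetilde B=B\widetilde A$, and moreover asserts $\deg(\widetilde B)\leq\deg(B)$. Since $A$ is non-degenerate and $\widetilde B$ is non-degenerate, so is the product $A\widetilde B$, and by multiplicativity of the Dieudonné determinant (stated just after Definition \ref{def:degree}) we get
$$
\deg\big(\text{right l.c.m.}(A,B)\big)=\deg(A\widetilde B)=\deg(A)+\deg(\widetilde B)\leq\deg(A)+\deg(B)\,.
$$
The left case is symmetric, using Lemma \ref{20130703:lem2}(b) in place of (a). There is one small point to check when chaining the induction: in \eqref{20130705:eq5} the operator $L_{N-1}$ plays the role of ``$B$'' (or ``$A$'') in Lemma \ref{20130703:lem2}, so one needs $L_{N-1}$ to be non-degenerate — but this follows since $L_{N-1}=B^1C^1$ with $B^1$ and, by construction (Lemma \ref{20130703:lem2}), $C^1$ both non-degenerate.

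I do not anticipate a genuine obstacle here: the result is an almost immediate consequence of Lemma \ref{20130703:lem2} together with additivity of degree under multiplication. The only thing requiring a little care is bookkeeping in the induction — making sure that at each stage the l.c.m.\ built so far is non-degenerate (so that its degree is defined and Lemma \ref{20130703:lem2} applies) and that the roles of ``left'' and ``right'' are consistently tracked through \eqref{20130705:eq5}. Once those are nailed down, the estimate telescopes cleanly to give $\deg\big(\text{l.c.m.}(B^1,\dots,B^N)\big)\leq\sum_{k=1}^N\deg(B^k)$.
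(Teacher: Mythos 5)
Your proposal is correct and follows essentially the same route as the paper: the case $N=2$ is exactly Lemma \ref{20130703:lem2} (plus the one-line degree computation $\deg(A\widetilde B)=\deg(A)+\deg(\widetilde B)\leq\deg(A)+\deg(B)$, which you make explicit), and the general case follows by induction via \eqref{20130705:eq5}. Your bookkeeping remarks (non-degeneracy of the partial l.c.m., which follows from multiplicativity of the Dieudonn\'e determinant) are sound and only flesh out what the paper leaves implicit.
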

\begin{proof}
For $N=2$ the claim is Lemma \ref{20130703:lem2}.
For arbitrary $N\geq2$, it follows inductively by equation \eqref{20130705:eq5}.
\end{proof}

\subsection{Strongly coprime matrices}\label{sec:2.3c}

\begin{definition}\label{20130705:def}
We say that the non-degenerate matrix differential operators
$B^1,\dots,B^N\in \Mat_{\ell\times\ell}\mc K[\partial]$ 
are \emph{strongly left (resp. right) coprime} if
$$
\deg\big(\text{right (resp. left) l.c.m.}(B^1,\dots,B^N)\big)
=
\deg(B^1)+\dots+\deg(B^N)
\,.
$$
\end{definition}
Note that, by Lemma \ref{20130703:lem2},
strong coprimeness is equivalent to coprimeness if $N=2$.
\begin{proposition}\label{20130705:lem1}
Let $B^1,\dots,B^N\in \Mat_{\ell\times\ell}\mc K[\partial]$
be non-degenerate and strongly left (resp. right) coprime.
Then they are pairwise left (resp. right) coprime.
\end{proposition}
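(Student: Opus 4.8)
\textbf{Proof plan for Proposition \ref{20130705:lem1}.}

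The plan is to reduce the statement to the case $N=2$, which is already handled by Lemma \ref{20130703:lem2}, by exploiting the inductive characterization \eqref{20130705:eq5} of the least common multiple together with the multiplicativity $\deg(PQ)=\deg(P)+\deg(Q)$ of the degree. I will treat the ``strongly left coprime'' case, the right case being entirely analogous (or obtainable by applying the adjoint and Remark \ref{rem:3.3}). Fix indices $i\neq j$; I want to show $B^i$ and $B^j$ are left coprime, which by Lemma \ref{20130703:lem2}(a) is equivalent to $\deg\big(\text{right l.c.m.}(B^i,B^j)\big)=\deg(B^i)+\deg(B^j)$.

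First I would set $M=\text{right l.c.m.}(B^1,\dots,B^N)$ and, using \eqref{20130705:eq5} and commutativity of the l.c.m.\ up to the obvious reordering, factor the computation of $M$ so that the partial l.c.m.\ $M'=\text{right l.c.m.}(B^i,B^j)$ appears as an intermediate step: $M=\text{right l.c.m.}\big(M',B^{k_1},\dots,B^{k_{N-2}}\big)$, where $k_1,\dots,k_{N-2}$ enumerate the remaining indices. Write $M'=B^i C=B^j C'$ for the appropriate $C,C'\in\Mat_{\ell\times\ell}\mc K[\partial]$. Then, applying Lemma \ref{20130705:lem2} to the collection $M',B^{k_1},\dots,B^{k_{N-2}}$, we get
\begin{equation*}
\deg(M)\leq \deg(M')+\sum_{m=1}^{N-2}\deg(B^{k_m})
\,.
\end{equation*}
On the other hand, by the strong left coprimeness hypothesis, $\deg(M)=\sum_{k=1}^N\deg(B^k)=\deg(B^i)+\deg(B^j)+\sum_{m=1}^{N-2}\deg(B^{k_m})$. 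Combining these two relations and cancelling the common sum over the remaining indices yields $\deg(M')\geq\deg(B^i)+\deg(B^j)$.

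For the reverse inequality I would invoke Lemma \ref{20130705:lem2} (or directly Lemma \ref{20130703:lem2}) applied to the pair $B^i,B^j$, which gives $\deg(M')\leq\deg(B^i)+\deg(B^j)$. Hence $\deg(M')=\deg(B^i)+\deg(B^j)$, and by Lemma \ref{20130703:lem2}(a) this is exactly the statement that $B^i$ and $B^j$ are left coprime. Since $i\neq j$ were arbitrary, pairwise left coprimeness follows. The one point that needs a little care — and which I expect to be the main (minor) obstacle — is justifying that the multi-term l.c.m.\ may be computed in an order that singles out the chosen pair $\{i,j\}$ first: this rests on the fact that the right l.c.m.\ is the generator of the intersection of the corresponding principal ideals, which is manifestly symmetric in its arguments, so any grouping and reordering in \eqref{20130705:eq5} produces the same operator up to right multiplication by a degree-zero (i.e.\ invertible) factor, leaving $\deg(M)$ unchanged.
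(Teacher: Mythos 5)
Your proposal is correct and follows essentially the same route as the paper: both use the associativity/symmetry of the multi-term l.c.m.\ to isolate the l.c.m.\ of the chosen pair, apply Lemma \ref{20130705:lem2} to get the degree inequality chain, conclude it collapses to equalities by the strong coprimeness hypothesis, and finish with the degree criterion of Lemma \ref{20130703:lem2}(a). The only difference is cosmetic: the paper argues for the pair $(B^{N-1},B^N)$ and says the same works for any pair, while you justify the reordering explicitly via the ideal-intersection characterization.
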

\begin{proof}
Let $B$ be the right (resp. left) least common multiple of $B^1,\dots,B^N$.
By assumption, $\deg(B)=\deg(B^1)+\dots+\deg(B^N)$.
Let now $\widetilde{B}^{N-1}$ be the right (resp. left) least common multiple 
of $B^{N-1}$ and $B^N$.
By the inductive formula \eqref{20130705:eq5} we have
$$
B=\text{right (resp. left) l.c.m} (B^1,\dots,B^{N-2},\widetilde{B}^{N-1})\,,
$$
and therefore, by Lemma \ref{20130705:lem2}, we have
\begin{equation}\label{20130705:eq6}
\begin{array}{l}
\displaystyle{
\vphantom{\Big(}
\deg(B)\leq\deg(B^1)+\dots+\deg(B^{N-2})+\deg(\widetilde{B}^{N-1})
} \\
\displaystyle{
\vphantom{\Big(}
\leq\deg(B^1)+\dots+\deg(B^{N-2})+\deg(B^{N-1})+\deg(B^N)
\,.}
\end{array}
\end{equation}
It follows that all inequalities in \eqref{20130705:eq6} are actually equalities,
and therefore, in particular, 
$$
\deg(\widetilde{B}^{N-1})=\deg(B^{N-1})+\deg(B^N)\,.
$$
By Lemma \ref{20130703:lem2}
this is equivalent to say that  $B^{N-1}$ and $B^N$ are left (resp. right) coprime.
The same argument works for any other pair $(B^i,B^j)$.
\end{proof}
\begin{remark}\label{20130715:rem1}
Strong coprimeness is stronger than pairwise coprimeness
of $N\geq3$ differential operators.
To see this, consider the differential operators
$\partial,\partial+\frac1x,\partial+\frac1{x+1}$ with coefficients 
in the field $\mb F(x)$ of rational functions in $x$.
They are obviously pairwise left coprime.
On the other hand, their right least common multiple is
$$
\partial^2
=\Big(\partial+\frac1x\Big)\circ\Big(\partial-\frac1x\Big)
=\Big(\partial+\frac1{x+1}\Big)\circ\Big(\partial-\frac1{x+1}\Big)
\,,
$$
which has degree $2<1+1+1$.
Hence, $\partial,\partial+\frac1x$ and $\partial+\frac1{x+1}$
are not strongly coprime.
\end{remark}

Recall the following result:
\begin{theorem}[\cite{CDSK13a}]\label{CR13}
Let $A,B\in\Mat_{\ell\times\ell}\mc K[\partial]$ be left coprime matrix differential operators,
with $B$ non-degenerate.
Let $A\widetilde{B}=B\widetilde{A}$ be their right least common multiple.
Then, for every $X,Y\in\mc K^\ell$ solving the equation $AX=BY$,
there exists $Z\in\mc K^\ell$ such that $X=\widetilde{B}Z$ and $Y=\widetilde{A}Z$.
\end{theorem}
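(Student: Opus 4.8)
The plan is to produce the vector $Z$ essentially as ``$\widetilde B^{-1}X$'', the only real issue being that $\widetilde B^{-1}$ is a pseudodifferential operator and need not map $\mc K^\ell$ into itself. First I would record the inputs from Lemma \ref{20130703:lem2}(a): since $A,B$ are left coprime, $\widetilde A$ and $\widetilde B$ are right coprime, $\widetilde B$ is non-degenerate, and $\deg(\widetilde B)=\deg(B)$; and from $A\widetilde B=B\widetilde A$, multiplying by $B^{-1}$ on the left and $\widetilde B^{-1}$ on the right, one gets $B^{-1}A=\widetilde A\widetilde B^{-1}$ in $\Mat_{\ell\times\ell}\mc K(\partial)$. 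Two cases are easy and worth settling first. If $\deg(B)=0$, then $\deg(\widetilde B)=0$ and $\widetilde B$ (also $B$) is invertible in $\Mat_{\ell\times\ell}\mc K[\partial]$ by Proposition \ref{cor:3.2}(b), so $Z=\widetilde B^{-1}X\in\mc K^\ell$ works: indeed $\widetilde A Z=\widetilde A\widetilde B^{-1}X=B^{-1}AX=B^{-1}BY=Y$. If $\ell=1$, then $\widetilde A,\widetilde B$, being coprime, are coprime on both sides, so there are $\bar P,\bar Q\in\mc K[\partial]$ with $\bar P\widetilde A+\bar Q\widetilde B=1$, and one checks that $Z:=\bar PY+\bar QX$ does the job.

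For the general case the key structural fact I would isolate is: $\ker\widetilde A\cap\ker\widetilde B=0$ in $\mc F^\ell$ for every differential field extension $\mc F\supseteq\mc K$. Indeed, if $W$ lies in this intersection, then $\{L\in\Mat_{\ell\times\ell}\mc K[\partial]:LW=0\}$ is a left ideal, hence of the form $\Mat_{\ell\times\ell}\mc K[\partial]\,G$; both $\widetilde A$ and $\widetilde B$ are then left multiples of $G$, so $G$ is a common right divisor of $\widetilde A$ and $\widetilde B$, hence a unit by right coprimeness, forcing $W=0$. Next I would pass to a suitable differential field extension $\widehat{\mc K}\supseteq\mc K$ over which every non-degenerate matrix differential operator is surjective with kernel of dimension equal to its degree over the constants, and such that $\mc K$ is the fixed field of $\mathrm{Aut}(\widehat{\mc K}/\mc K)$. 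Over $\widehat{\mc K}$: $\widetilde A$ maps $\ker\widetilde B$ into $\ker B$ (because $B\widetilde A W=A\widetilde B W=0$ when $\widetilde B W=0$), injectively by the fact above, and $\dim\ker\widetilde B=\deg(\widetilde B)=\deg(B)=\dim\ker B$, so $\widetilde A(\ker\widetilde B)=\ker B$. Choosing $W_1$ with $\widetilde B W_1=X$ by surjectivity, we get $B(\widetilde A W_1-Y)=A\widetilde B W_1-BY=AX-BY=0$, so $\widetilde A W_1-Y=\widetilde A W_2$ for some $W_2\in\ker\widetilde B$, and $W_0:=W_1-W_2$ satisfies $\widetilde B W_0=X$ and $\widetilde A W_0=Y$. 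By the first fact, $W_0$ is the unique solution in $\widehat{\mc K}^\ell$ of the $\mc K$-linear system $\widetilde B W=X,\ \widetilde A W=Y$, hence is fixed by $\mathrm{Aut}(\widehat{\mc K}/\mc K)$, hence lies in $\mc K^\ell$; take $Z=W_0$.

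The step I expect to be most delicate is this last descent, together with the correct construction of $\widehat{\mc K}$ — in particular, when the field of constants is not algebraically closed one should first extend the constants and then descend along that extension as well, using faithful flatness. A more elementary alternative would be induction on $\deg(B)$: factor a first-order left divisor $\widetilde B=L\widetilde B_1$ off $\widetilde B$ using the principal-ideal structure of $\mc K[\partial]$, solve the first-order system $LZ_1=X$ (where the condition $\widetilde A(\cdot)=Y$ must be fed in to guarantee solvability already in $\mc K^\ell$), and then apply the inductive hypothesis to a modified left-coprime pair whose right l.c.m.\ involves $\widetilde B_1$; the technical core there is checking that the modified pair is still left coprime with the expected right l.c.m., which is exactly the bookkeeping the $\widehat{\mc K}$-argument sidesteps.
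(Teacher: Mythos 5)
This theorem is not proved in the present paper at all --- it is quoted from \cite{CDSK13a} --- so there is no in-text proof to compare against; judged on its own terms, your argument is essentially correct, and it follows the ``linear-closure plus Galois descent'' philosophy that the paper itself uses elsewhere (Proposition \ref{CDSK13b:prop1}, Corollary \ref{DSK11:prop}, Lemma \ref{20130802:prop2}) rather than a purely ring-theoretic Bezout/completion argument. Your two key steps hold up: (i) $\ker_{\mc F}\widetilde{A}\cap\ker_{\mc F}\widetilde{B}=0$ over \emph{any} differential field extension $\mc F\supseteq\mc K$, proved via the principal left ideal $\{L:\,LW=0\}$ and right coprimeness of $\widetilde{A},\widetilde{B}$ (this is a clean strengthening of Theorem \ref{th:6.4}(a)(iv)); and (ii) over a linearly closed extension, $\deg(\widetilde{B})=\deg(B)$ (Lemma \ref{20130703:lem2}(a), which is exactly where left coprimeness of $A,B$ enters), $\dim\ker=\deg$ (Corollary \ref{DSK11:prop}) and surjectivity of non-degenerate operators (Proposition \ref{CDSK13b:prop1}) give $\widetilde{A}(\ker\widetilde{B})=\ker B$, hence a solution $W_0$ of $\widetilde{B}W=X$, $\widetilde{A}W=Y$, unique by (i).

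The one step you should not assert as if it were available off the shelf is the existence of $\widehat{\mc K}$ with $\mc K$ equal to the fixed field of $\mathrm{Aut}(\widehat{\mc K}/\mc K)$: for the linear closure $\mc L$ this is not stated in the paper and is genuinely delicate when $\mc C$ is not algebraically closed. However, your uniqueness observation is precisely the hypothesis that makes the two-step descent of Lemma \ref{20130802:prop2} applicable verbatim: the unique solution in $\mc L^\ell$ of the $\mc K$-linear system is first shown to have entries in $\bar{\mc C}\otimes_{\mc C}\mc K$ using the Picard--Vessiot composita and \cite[Lem.~5.9, Prop.~5.14]{CDSK13b}, and then to lie in $\mc K^\ell$ by ordinary (finite) Galois theory; substituting that argument for your ``fixed field'' claim closes the gap, so I would call the proof complete modulo that citation. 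A minor quibble: in your $\ell=1$ warm-up, checking that $Z=\bar P Y+\bar Q X$ works is not immediate for an \emph{arbitrary} Bezout pair $(\bar P,\bar Q)$ --- one needs either the doubly coprime completion identities or the uniqueness-over-$\mc L$ argument to see that the ambiguity $\bar P\mapsto\bar P+SU$, $\bar Q\mapsto\bar Q-SV$ (with $U\widetilde{A}=V\widetilde{B}$ the left l.c.m.) does not change $Z$; since the case is subsumed by your general argument, you could simply drop it.
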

We can generalize this to an arbitrary number of strongly coprime operators.
\begin{theorem}\label{20130705:thm}
Let $B^1,\dots,B^N\in\Mat_{\ell\times\ell}\mc K[\partial]$ 
be strongly left coprime non-degenerate matrix differential operators.
Let $B=B^1C^1=\dots=B^NC^N$ be their right least common multiple.
Then, for every $F^1,\dots,F^N\in\mc K^\ell$ solving 
the equations 
\begin{equation}\label{20130705:eq7}
B^1F^1=\dots=B^NF^N\,,
\end{equation}
there exists $F\in\mc K^\ell$ such that $F^\alpha=C^\alpha F$ for every $\alpha=1,\dots,N$.
\end{theorem}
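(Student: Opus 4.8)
The plan is to prove Theorem \ref{20130705:thm} by induction on $N$, reducing to the case $N=2$ handled by Theorem \ref{CR13}. The base case $N=1$ is trivial (take $F=F^1$, $C^1=\id$), and $N=2$ is exactly Theorem \ref{CR13} applied to the pair $B^1,B^2$, which are left coprime by Proposition \ref{20130705:lem1}; note that $B=B^1C^1=B^2C^2$ is literally the right l.c.m., so Theorem \ref{CR13} gives the desired $F$. For the inductive step, suppose the statement holds for $N-1$ strongly left coprime operators. Given $B^1,\dots,B^N$ strongly left coprime with solutions $F^1,\dots,F^N$ of \eqref{20130705:eq7}, I would first form $\widetilde B$, the right l.c.m. of $B^1,\dots,B^{N-1}$, writing $\widetilde B=B^\alpha D^\alpha$ for $\alpha=1,\dots,N-1$. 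The first subtlety: I must check that $B^1,\dots,B^{N-1}$ are themselves strongly left coprime. This follows from the argument in the proof of Proposition \ref{20130705:lem1}: by the inductive l.c.m. formula \eqref{20130705:eq5}, $B=\text{l.c.m.}(\widetilde B, B^N)$, so $\deg(B)\le\deg(\widetilde B)+\deg(B^N)\le\sum_{\alpha=1}^{N-1}\deg(B^\alpha)+\deg(B^N)=\deg(B)$, forcing $\deg(\widetilde B)=\sum_{\alpha=1}^{N-1}\deg(B^\alpha)$, which is precisely strong left coprimeness of $B^1,\dots,B^{N-1}$, and simultaneously shows $\widetilde B$ and $B^N$ are left coprime (equality in Lemma \ref{20130703:lem2}).

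Next, by the inductive hypothesis applied to $B^1,\dots,B^{N-1}$ and the solutions $F^1,\dots,F^{N-1}$ (which solve $B^1F^1=\dots=B^{N-1}F^{N-1}$), there exists $\widetilde F\in\mc K^\ell$ with $F^\alpha=D^\alpha\widetilde F$ for $\alpha=1,\dots,N-1$. Now I want to combine $\widetilde F$ and $F^N$. Observe that $\widetilde B\,\widetilde F = B^\alpha D^\alpha\widetilde F = B^\alpha F^\alpha$ for each $\alpha\le N-1$, and by \eqref{20130705:eq7} this common value equals $B^N F^N$. So $\widetilde B\,\widetilde F = B^N F^N$, i.e. $(\widetilde F, F^N)$ solves the two-operator equation for the left coprime pair $(\widetilde B, B^N)$. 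Let $\widetilde B\,E = B^N\,E'$ be their right l.c.m.; by construction of $B$ via \eqref{20130705:eq5} together with the degree equalities just established, this right l.c.m. is (up to the usual right multiplication ambiguity, which we absorb) equal to $B$, so $B=\widetilde B E=B^N E'$ with $E,E'\in\Mat_{\ell\times\ell}\mc K[\partial]$. Theorem \ref{CR13} then yields $F\in\mc K^\ell$ with $\widetilde F=EF$ and $F^N=E'F$.

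It remains to identify the cofactors $C^\alpha$. For $\alpha\le N-1$ we have $F^\alpha=D^\alpha\widetilde F=D^\alpha E F$, and since $B=\widetilde B E=B^\alpha D^\alpha E$, the operator $D^\alpha E$ is exactly the cofactor expressing $B$ as a right multiple of $B^\alpha$; thus $C^\alpha=D^\alpha E$ and $F^\alpha=C^\alpha F$. For $\alpha=N$ we have $F^N=E'F$ and $B=B^N E'$, so $C^N=E'$ and $F^N=C^N F$. This gives the conclusion. The one point requiring care — and the main obstacle — is the bookkeeping around the right l.c.m. being defined only up to right multiplication by a non-degenerate operator: I need to make sure that the decompositions $B=\widetilde B E=B^N E'$ coming from Theorem \ref{CR13} can be aligned with the given decomposition $B=B^\alpha C^\alpha$ of the right l.c.m.\ in the statement. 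This is resolved using the uniqueness clause recalled in the introduction (any two right fractional decompositions differ by a non-degenerate right factor $D$), together with the fact that all the degree inequalities above are forced to be equalities by strong coprimeness, so no spurious degree is introduced; one then replaces the auxiliary l.c.m.\ by the honest $B$ and propagates the common right factor through all the $C^\alpha$ simultaneously. The rest is the routine verification carried out above.
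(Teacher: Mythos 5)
Your proposal is correct and follows essentially the same inductive scheme as the paper's proof: form the right l.c.m.\ of all but one of the operators, use strong coprimeness to force the degree equalities (hence strong coprimeness of the subcollection and left coprimeness with the remaining operator), apply the inductive hypothesis, and finish with Theorem \ref{CR13}; the only cosmetic difference is that you peel off $B^N$ where the paper peels off $B^1$, and you spell out the degree bookkeeping that the paper states as immediate. The identification $C^\alpha=D^\alpha E$, $C^N=E'$ that you worry about is handled in the paper in exactly the way you indicate, via formula \eqref{20130705:eq5} applied to the given $B$.
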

\begin{proof}
For $N=2$ the claim holds by Theorem \ref{CR13}.
For $N\geq3$, we prove the claim by induction on $N$.
Let
$$
\widetilde{B}^2=\text{right l.c.m.}(B^2,\dots,B^N)=B^2D^2=\dots=B^ND^N
\,.
$$
By the strong coprimeness of $B^1,\dots,B^N$ and Lemma \ref{20130705:lem2},
we immediately have that
$\deg(\widetilde{B}^2)=\deg(B^2)+\dots+\deg(B^N)$,
and that $B^1$ and $\widetilde{B}^2$ are left coprime.
Since $B^2F^2=\dots=B^NF^N$,
by the inductive assumption there exists 
$\widetilde{F}^2\in\mc K^\ell$ such that 
\begin{equation}\label{20130705:eq9}
F^2=D^2 \widetilde{F}^2,\dots,F^N=D^N \widetilde{F}^2
\,.
\end{equation}
Hence, by the first equation in \eqref{20130705:eq7}, we have
\begin{equation}\label{20130705:eq8}
B^1 F^1=\widetilde{B}^2\widetilde{F}^2
\,.
\end{equation}
On the other hand, 
by the inductive formula \eqref{20130705:eq5}
we have
$$
B
=\text{right l.c.m.}(B^1,\widetilde{B}^2)
=B^1C^1=\widetilde{B}^2E
\,,
$$
and, therefore,
\begin{equation}\label{20130705:eq10}
C^2=D^2E,\dots,C^N=D^NE
\,.
\end{equation}
Since $B^1$ and $\widetilde{B}^2$ are left coprime,
by equation \eqref{20130705:eq8} and Theorem \ref{CR13}
there exists $F\in\mc K^{\ell}$ such that
$$
F^1=C^1F
\,\text{  and }\,
\widetilde{F}^2=EF\,.
$$
These equations, combined with \eqref{20130705:eq9} and \eqref{20130705:eq10},
prove the claim.
\end{proof}
\begin{remark}\label{20130725:rem}
The example in Remark \ref{20130715:rem1} shows that Theorem \ref{20130705:thm}
may fail for pairwise left coprime $B_i$'s.
Indeed, let, as in Remark \ref{20130715:rem1}, 
$B_1=\partial,\,B_2=\partial+\frac1x,\,B_3=\partial+\frac1{x+1}$, 
and $C_1=\partial,\,C_2=\partial-\frac1x,\,C_3=\partial-\frac1{x+1}$,
so that $B_1C_1=B_2C_2=B_3C_3$ is the right l.c.m. of $B_1$, $B_2$, $B_3$.
Let also $F_1=1$, $F_2=\frac1x$ and $F_3=\frac\alpha{x+1}$,
where $\alpha$ is a constant.
They solve the equations $B_1F_1=B_2F_2=B_3F_3=0$.
On the other hand, the only function $F$ solving $C_1F=F_1$ and $C_2F=F_2$
is $F=x-1$. Such $F$ solves also the equation $C_3F=F_3$ if and only if $\alpha=2$.
\end{remark}

\subsection{Linearly closed differential fields}\label{sec:2.4}

A differential field $\mc K$ is called \emph{linearly closed}
if every homogeneous linear differential equation of order $n\geq1$,
\begin{equation}\label{20120121:eq1}
a_nu^{(n)}+\dots+a_1u'+a_0u=0\,,
\end{equation}
with $a_0,\dots,a_n$ in $\mc K$, $a_n\neq0$,
has a non-zero solution $u\in\mc K$.

It is easy to show that
the solutions of equation \eqref{20120121:eq1} in a differential field $\mc K$
form a vector space over the field of constants $\mc C$
of dimension less than or equal to $n$,
and equal to $n$ if $\mc K$ is linearly closed (see e.g. \cite{DSK11}).
\begin{proposition}[\cite{CDSK13b}]
\label{CDSK13b:prop1}
If $A\in\Mat_{\ell\times\ell}{\mathcal{K}}[\partial]$
is a non-degenerate matrix differential operator 
and $b\in\mc K^\ell$,
then the inhomogeneous system of linear differential equations
in $u=\big(u_i\big)_{i=1}^n$,
\begin{equation}\label{20120121:eq5}
A(\partial)u=b\,,
\end{equation}
admits the affine space (over $\mc C$) of solutions
of dimension less than or equal to $\deg (A)$,
and equal to $\deg(A)$ if $\mc K$ is linearly closed. 
\end{proposition}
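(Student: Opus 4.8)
\emph{Plan of proof.} The idea is to reduce everything to the scalar theory recalled after \eqref{20120121:eq1}. Since a matrix differential operator acts $\mc C$-linearly on $\mc K^\ell$, the solution set of $A(\partial)u=b$ is either empty or a coset of the $\mc C$-vector space $V_A=\{u\in\mc K^\ell:A(\partial)u=0\}$; thus it suffices to prove that $\dim_{\mc C}V_A\le\deg(A)$ in general, and that $\dim_{\mc C}V_A=\deg(A)$ and the system $A(\partial)u=b$ is solvable for every $b$ when $\mc K$ is linearly closed. To this end I would first use that $\mc K[\partial]$ is a left Euclidean domain for the order function --- equivalently, that $\Mat_{\ell\times\ell}\mc K[\partial]$ is a left principal ideal ring --- to perform row reduction: there is an invertible $U\in\Mat_{\ell\times\ell}\mc K[\partial]$, hence non-degenerate of degree $0$ by Proposition \ref{cor:3.2}(b), with $T:=UA$ upper triangular. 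Then $u\mapsto u$, $b\mapsto Ub$ identifies the solutions of $A(\partial)u=b$ with those of $T(\partial)u=Ub$ and $V_A$ with $V_T$, and $\deg(T)=\deg(U)+\deg(A)=\deg(A)$. If $d_1,\dots,d_\ell\in\mc K[\partial]$ are the diagonal entries of $T$, then non-degeneracy of $T$ forces each $d_i\ne0$, and property $(ii)$ of the Dieudonn\'e determinant gives $\deg(A)=\sum_{i=1}^\ell\ord(d_i)$, a sum of nonnegative integers.

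The second step is back-substitution in $T(\partial)u=c$, where $c:=Ub$: the bottom equation is the scalar equation $d_\ell(\partial)u_\ell=c_\ell$, and once $u_{i+1},\dots,u_\ell$ are fixed the $i$-th equation becomes the scalar inhomogeneous equation $d_i(\partial)u_i=c_i-\sum_{j>i}T_{ij}(\partial)u_j$. The solution set in $\mc K$ of each of these is empty or an affine space over $\mc C$ of dimension $\dim_{\mc C}\ker d_i(\partial)\le\ord(d_i)$, by \cite{DSK11}. Running an induction downward in $i$ --- observing that the solution set of equations $i,\dots,\ell$ is an affine space fibred over an affine subspace of the solution set of equations $i{+}1,\dots,\ell$ with fibres of dimension $\le\ord(d_i)$, and that for affine spaces over a field the dimension of the total space equals that of the base plus that of a fibre --- one gets $\dim_{\mc C}V_T\le\sum_{i}\ord(d_i)=\deg(A)$, which is the asserted inequality.

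For the linearly closed case, \cite{DSK11} upgrades each $\dim_{\mc C}\ker d_i(\partial)\le\ord(d_i)$ to an equality, so the estimates above become equalities and $V_T$ is nonempty provided every step of the back-substitution can be carried out. Hence both remaining assertions follow from a single lemma: \emph{if $\mc K$ is linearly closed and $d\in\mc K[\partial]$ has order $n\ge1$, then the map $d(\partial)\colon\mc K\to\mc K$ is surjective.} I would prove this as follows. Fix $b\in\mc K$, $b\ne0$, and consider $L=b\circ\partial\circ b^{-1}\circ d(\partial)\in\mc K[\partial]$ (a composition of $d(\partial)$ with the operators of multiplication by $b^{-1}$ and $b$ and with $\partial$); it has order $n+1$ and the same leading coefficient as $d$, hence is non-degenerate, so $\ker L$ has dimension $n+1$ over $\mc C$. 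Now $Lu=0$ exactly when $b^{-1}d(\partial)u\in\mc C$, i.e. $\ker L=\{u\in\mc K:d(\partial)u\in\mc C b\}$, and this properly contains the $n$-dimensional space $\ker d(\partial)$; picking $u_1\in\ker L\setminus\ker d(\partial)$ gives $d(\partial)u_1=cb$ with $c\in\mc C$, $c\ne0$, so that $c^{-1}u_1$ is a preimage of $b$.

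The routine parts are the row reduction to upper triangular form and the affine-dimension bookkeeping in the back-substitution. The crux --- the only place where linear closedness is used beyond what the homogeneous scalar theory already gives --- is the surjectivity lemma, and the trick there is to encode the inhomogeneous scalar equation $d(\partial)u=b$ as the \emph{homogeneous} equation $L(\partial)u=0$ of one order higher, so that the existence statement for \eqref{20120121:eq1} applies directly.
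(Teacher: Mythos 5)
Your argument is correct and complete: the Hermite-type triangularization $T=UA$ with $U$ invertible (legitimate since $\mc K[\partial]$ admits left division with remainder), the identification $\deg(A)=\sum_i\ord(d_i)$ via property $(ii)$ of the Dieudonn\'e determinant, the back-substitution dimension count, and the surjectivity lemma obtained by encoding $d(\partial)u=b$ in the homogeneous operator $b\circ\partial\circ b^{-1}\circ d(\partial)$ of one order higher all check out (the only unstated case, $\ord(d_i)=0$, is trivial, since then $d_i$ acts as multiplication by a nonzero element of $\mc K$). Note that the present paper does not prove this proposition but quotes it from \cite{CDSK13b}; your route --- reduction to the scalar theory after \eqref{20120121:eq1} by row reduction, plus the order-raising trick for surjectivity over a linearly closed field --- is essentially the standard argument of the cited reference, so there is nothing to correct.
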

\begin{defprop}[\cite{Mag94} (see also \cite{CDSK13b})]
\label{CDSK13b:prop2}
Let $\mc K$ be a differential field with subfield of constants $\mc C$,
and let $\bar{\mc C}$ be the algebraic closure of $\mc C$.
Then there exists a unique (up to isomorphism)
minimal linearly closed extension $\mc K \subset \mc L$
with subfield of constants $\bar{\mc C}$,
called the \emph{linear closure} of $\mc K$.
\end{defprop}

\begin{corollary}\label{DSK11:prop}
Let $\mc K$ be a differential field with subfield of constants $\mc C$.
Let $\bar{\mc C}$ be the algebraic closure of $\mc C$,
and let $\mc L$ be the linear closure of $\mc K$.
Let $A\in\Mat_{\ell\times\ell}\mc K[\partial]$
be a non-degenerate matrix differential operator.
Then,
$$
\deg(A)=\dim_{\bar{\mc C}}\ker{}_{\mc L}(A)\,,
$$
where $\ker_{\mc L}(A)$ denotes the kernel of $A$,
considered as a map $\mc L^{\ell}\to\mc L^{\ell}$.
\end{corollary}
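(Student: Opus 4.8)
The plan is to prove the identity $\deg(A)=\dim_{\bar{\mc C}}\ker_{\mc L}(A)$ by applying Proposition \ref{CDSK13b:prop1} to the matrix differential operator $A$, now viewed with coefficients in the linear closure $\mc L$ of $\mc K$, and taking the homogeneous system $b=0$. The first point to check is that $A$ remains non-degenerate when regarded as an element of $\Mat_{\ell\times\ell}\mc L[\partial]$, and that its degree is unchanged: this is immediate, since the Dieudonn\'e determinant is computed from the same matrix entries, and $\deg(A)$ depends only on those entries, not on the ambient field. So $\deg_{\mc L}(A)=\deg_{\mc K}(A)=\deg(A)$.

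Next I would observe that $\mc L$, being linearly closed by Definition/Proposition \ref{CDSK13b:prop2}, is in particular a differential field that is linearly closed, so Proposition \ref{CDSK13b:prop1} applies with $\mc K$ replaced by $\mc L$: the homogeneous system $A(\partial)u=0$, $u\in\mc L^\ell$, has a space of solutions over the constants of $\mc L$ of dimension \emph{exactly} $\deg(A)$, since for a linearly closed field the inequality in that proposition is an equality. The space of solutions of $A(\partial)u=0$ in $\mc L^\ell$ is precisely $\ker_{\mc L}(A)$ by definition, and the subfield of constants of $\mc L$ is $\bar{\mc C}$, again by Definition/Proposition \ref{CDSK13b:prop2}. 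Therefore $\dim_{\bar{\mc C}}\ker_{\mc L}(A)=\deg(A)$, which is the claim.

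The only real subtlety — and the step I would flag as the main obstacle, though it is minor — is the bookkeeping that the constants of $\mc L$ are exactly $\bar{\mc C}$ and not something larger or smaller; but this is guaranteed by the characterizing property of the linear closure in Definition/Proposition \ref{CDSK13b:prop2}, so no work is needed. One should also make sure that Proposition \ref{CDSK13b:prop1} is being invoked in its "equality" form, which requires linear closedness of the base field, and that is exactly the hypothesis satisfied by $\mc L$. I expect the entire argument to be two or three lines once these invocations are lined up: non-degeneracy and degree are field-independent; linear closedness upgrades the dimension inequality to an equality; and the constants are $\bar{\mc C}$ by construction.
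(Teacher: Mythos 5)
Your proposal is correct and is exactly the argument the paper intends: the corollary is stated as an immediate consequence of Proposition \ref{CDSK13b:prop1} (applied over the linearly closed field $\mc L$, with $b=0$, where the dimension bound becomes an equality) together with the fact from Definition/Proposition \ref{CDSK13b:prop2} that the constants of $\mc L$ are $\bar{\mc C}$. The only point you gloss over, the invariance of $\deg(A)$ under the extension $\mc K\subset\mc L$, is indeed immediate from the multiplicativity and triangular-case formula defining the Dieudonn\'e determinant, so no gap.
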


\section{Singular degree of a rational matrix pseudodifferential operator}\label{sec:3}

\subsection{Rational matrix pseudodifferential operators}\label{sec:3.1}

Throughout the rest of the paper
we let $\mc K$ be a differential field 
with derivation $\partial$
and with subfield of constants $\mc C$,
we let $\bar{\mc C}$ be the algebraic closure of $\mc C$
and $\mc L$ be the linear closure of $\mc K$.

The algebra $\mc K(\partial)$ of \emph{rational pseudodifferential operators} over $\mc K$ is,
by definition, the smallest subskewfield of $\mc K((\partial^{-1}))$ containing $\mc K[\partial]$.
Any rational pseudodifferential operator $L\in\mc K(\partial)$
admits a fractional decomposition $h=ab^{-1}$,
with $a,b\in\mc K[\partial]$ (see e.g. \cite{CDSK12}).

A matrix $H \in\Mat_{\ell\times\ell}({\mathcal{K}}(\partial))$ is called a 
\emph{rational matrix pseudodifferential operator}. 
In other words, all the 
entries of such a matrix have the form $h_{ij}={a_{ij}}{b_{ij}}^{-1}$, 
$i,j=1,\dots,\ell$, where $a_{ij}, b_{ij} \in {\mathcal{K}}[\partial]$ and all 
${b_{ij}} \neq 0$. 
Denoting by $b$ a right common multiple of the $b_{ij}$'s
(see e.g. \cite{CDSK12}),
we see that $H$ 
admits a fractional decomposition $H=AB^{-1}$,
where $A,B\in\Mat_{\ell\times\ell}\mc K[\partial]$
and $B=b\id$ is non-degenerate.

\subsection{Minimal fractional decomposition for a rational matrix pseudodifferential operator
and singular degree}
\label{sec:3.1a}

\begin{definition}
A right fractional decomposition $H=AB^{-1}$, where $A,B \in 
M_{\ell\times\ell}{\mathcal{K}}[\partial]$ and $B$ non-degenerate, 
is called \emph{minimal} if $\deg(B)$ ( $\in {\mathbb{Z}}_{+}$) is minimal among all 
possible right fractional decompositions of $H$.
\end{definition}


\begin{theorem}[\cite{CDSK13b}]
\label{th:6.4}
\begin{enumerate}[(a)]
\item
Let $H\in \Mat_{\ell\times\ell}\mc K(\partial)$,
and let $H=AB^{-1}$ be a right fractional decomposition for $H$,
with $A,B\in\Mat_{\ell\times\ell}\mc K[\partial]$ and $B$ non-degenerate.
The following conditions are equivalent:
\begin{enumerate}[$(i)$]
\item
$H=AB^{-1}$ \emph{minimal};
\item
$A$ and $B$ are \emph{right coprime}, i.e.
if $A=A_1D$ and $B=B_1D$, with $A_1,B_1,D\in M_n({\mathcal{K}}[\partial])$,
then $D$ is invertible in $M_{\ell\times\ell}({\mathcal{K}}[\partial])$;
\item
$CA+DB=\id$ for some $C,D\in\Mat_{\ell\times\ell}\mc K[\partial]$
\emph{(Bezout identity)};
\item
$\ker_{\mc L}A\cap\ker_{\mc L}B=0$.
\end{enumerate}
\item
If ${A_0}{B_0}^{-1}$ is a minimal fractional decomposition of the fraction 
$H=AB^{-1}$, then one can find a non-degenerate matrix differential operator $D$ such that 
$A={A_0}D$ and $B={B_0}D$.
\item
A minimal right fractional decomposition $H=AB^{-1}
\in\Mat_{\ell\times\ell}\mc K(\partial)$,
and a minimal left fractional decomposition $H=B_1^{-1}A_1$
(i.e. with $B_1\in\Mat_{\ell\times\ell}\mc K[\partial]$
non-degenerate of minimal possible degree),
have denominators of the same degree: $\deg(B)=\deg(B_1)$.
\end{enumerate}
\end{theorem}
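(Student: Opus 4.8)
\textbf{Proof proposal for Theorem \ref{th:6.4}(c).}

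The plan is to reduce the statement about left and right minimal denominators to the adjoint operation, using Remark \ref{rem:3.3}, which tells us that $\deg(A^*)=\deg(A)$ for non-degenerate $A$ (up to the irrelevant sign in the Dieudonn\'e determinant, the \emph{degree} is unchanged). First I would take a minimal right fractional decomposition $H=AB^{-1}$ with $\deg(B)=\sdeg(H)$. Applying the adjoint, $H^*=(B^*)^{-1}A^*$ is a left fractional decomposition of $H^*$, and by Theorem \ref{th:6.4}(a) the right coprimeness of $A$ and $B$ translates, via the Bezout identity $CA+DB=\id$, into $A^*C^*+B^*D^*=\id$, which is the left-coprimeness Bezout identity for $A^*$ and $B^*$. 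Hence $H^*=(B^*)^{-1}A^*$ is a \emph{minimal} left fractional decomposition of $H^*$, so the minimal left denominator degree of $H^*$ equals $\deg(B^*)=\deg(B)=\sdeg(H)$.

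Next I would run the same argument in the other direction: starting from a minimal left fractional decomposition $H=B_1^{-1}A_1$, the adjoint $H^*=A_1^*(B_1^*)^{-1}$ is a minimal \emph{right} fractional decomposition of $H^*$ (again by the symmetric form of Theorem \ref{th:6.4}(a), with the Bezout identity transported through the adjoint), so the minimal right denominator degree of $H^*$ equals $\deg(B_1^*)=\deg(B_1)$. Now I need to connect the minimal right denominator degree of $H$ with the minimal right denominator degree of $H^*$: but the adjoint is an anti-involution on $\Mat_{\ell\times\ell}\mc K((\partial^{-1}))$ sending right decompositions to left decompositions and vice versa, and it is degree-preserving, so the minimal \emph{right} denominator degree of $H^*$ equals the minimal \emph{left} denominator degree of $H$, and the minimal \emph{left} denominator degree of $H^*$ equals the minimal \emph{right} denominator degree of $H$. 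Combining the two computations: $\sdeg(H) = \deg(B) = (\text{min.\ left denom.\ deg.\ of }H^*) = (\text{min.\ right denom.\ deg.\ of }H) $ is automatic, but more to the point, $\deg(B_1) = (\text{min.\ right denom.\ deg.\ of }H^*) = (\text{min.\ left denom.\ deg.\ of }H) = \deg(B_1)$ and chaining these identifications yields $\deg(B) = \deg(B_1)$.

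The only genuine content to check is that the adjoint really does interchange minimal right and minimal left fractional decompositions — i.e., that $(AB^{-1})^* = (B^*)^{-1}A^*$ as rational matrix pseudodifferential operators, that $A,B$ right coprime $\iff$ $A^*,B^*$ left coprime, and that $\deg$ is adjoint-invariant; the first is a formal manipulation in the skewfield $\Mat_{\ell\times\ell}\mc K((\partial^{-1}))$, the second is the transpose of the Bezout identity in Theorem \ref{th:6.4}(a)$(iii)$, and the third is Remark \ref{rem:3.3} together with the fact that the sign $(-1)^{\deg(A)}$ does not affect $\deg$. I expect the main obstacle to be purely bookkeeping: being careful that ``minimal left fractional decomposition'' is characterized by the mirror-image of Theorem \ref{th:6.4}(a) (left coprimeness, left Bezout identity $AC+BD=\id$), which the excerpt states for the right case and which I would invoke in its evident left analogue. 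Once that symmetry is in hand, the proof is a two-line diagram chase through the adjoint.
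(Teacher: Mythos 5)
Your adjoint reduction is fine as far as it goes: $(AB^{-1})^*=(B^*)^{-1}A^*$, adjointing a Bezout identity $CA+DB=\id$ shows that $A,B$ right coprime is equivalent to $A^*,B^*$ left coprime, and $\deg(B^*)=\deg(B)$ by Remark \ref{rem:3.3}. But the final ``chaining'' does not close. What the adjoint gives you is exactly two separate identities: (minimal left denominator degree of $H^*$) $=$ (minimal right denominator degree of $H$) $=\deg(B)$, and (minimal right denominator degree of $H^*$) $=$ (minimal left denominator degree of $H$) $=\deg(B_1)$. These two chains never meet: to link them you would need to know that the minimal left and minimal right denominator degrees of $H^*$ (equivalently, of $H$) coincide, which is precisely part (c) again, applied to $H^*$. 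So the argument is circular -- the anti-involution $*$ only proves that the statement for $H$ is equivalent to the statement for $H^*$, and each displayed equality in your last step is a tautological restatement of one of the two computations rather than a bridge between them. (This is also consistent with how the present paper is organized: Proposition \ref{20130702:lem1}, $\sdeg(H)=\sdeg(H^*)$, is \emph{deduced from} Theorem \ref{th:6.4}(c), not the other way around; the theorem itself is quoted from \cite{CDSK13b} without proof, so there is no in-paper argument to compare with, but your proposal does not supply one either.)

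The missing ingredient is a direct comparison of a left and a right decomposition of the \emph{same} operator. For example: given the minimal right decomposition $H=AB^{-1}$, part (a) gives that $A$ and $B$ are right coprime, so by Lemma \ref{20130703:lem2}(b) their left least common multiple $\widetilde{B}A=\widetilde{A}B$ has $\deg(\widetilde{B})=\deg(B)$; since $\widetilde{B}^{-1}\widetilde{A}=AB^{-1}=H$, this exhibits a left fractional decomposition of $H$ with denominator of degree $\deg(B)$, whence $\deg(B_1)\leq\deg(B)$. The symmetric argument (the right l.c.m.\ applied to the minimal left decomposition $H=B_1^{-1}A_1$, using left coprimeness of $A_1,B_1$ and Lemma \ref{20130703:lem2}(a) -- or, at this point legitimately, your adjoint trick used only to transport this one inequality) gives $\deg(B)\leq\deg(B_1)$, and equality follows. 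Finally, note that your proposal treats only part (c) and silently invokes the left-handed analogue of part (a) to certify ``minimal left''; that analogue should at least be stated, since it is what turns the left Bezout identity for $A^*,B^*$ into minimality of $(B^*)^{-1}A^*$.
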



\begin{definition}\label{20130610:def}
Let $H\in\Mat_{\ell\times\ell}\mc K(\partial)$
be a rational matrix pseudodifferential operator,
and let $H=AB^{-1}$ be its minimal fractional decomposition,
with $A,B\in\Mat_{\ell\times\ell}\mc K[\partial]$ and $B$ non-degenerate.
The \emph{singular degree} of $H$ is the non-negative integer
$\sdeg(H)=\deg(B)$.
\end{definition}

\subsection{Some properties of the singular degree}\label{sec:3.3a}

\begin{proposition}\label{20130610:lem1}
Let $H\in\Mat_{\ell\times\ell}\mc K(\partial)$,
and let $D\in\Mat_{\ell\times\ell}\mc K[\partial]$ be a non-degenerate matrix
such that $HD\in\Mat_{\ell\times\ell}\mc K[\partial]$.
Then
$$
\sdeg(H)=\dim_{\bar{\mc C}}\big((HD)(\ker{}_{\mc L}D)\big)
\,.
$$
\end{proposition}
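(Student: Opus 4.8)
The plan is to reduce the statement to Corollary \ref{DSK11:prop} together with Theorem \ref{th:6.4}, by a careful bookkeeping of kernels and degrees over the linear closure $\mc L$. First I would fix a minimal right fractional decomposition $H=A_0B_0^{-1}$, so that $\sdeg(H)=\deg(B_0)$, and observe that from $HD\in\Mat_{\ell\times\ell}\mc K[\partial]$ one gets $A_0B_0^{-1}D=HD=:P\in\Mat_{\ell\times\ell}\mc K[\partial]$, hence $A_0 D = P B_0$ (after clearing the denominator, using that $B_0^{-1}D$ is a ratio of differential operators; more precisely, $B_0^{-1}D = \widetilde D\widetilde B_0^{-1}$ for a right l.c.m. of $B_0$ and $D$, and one checks the resulting identity). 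The cleanest route is: since $A_0$ and $B_0$ are right coprime, by Theorem \ref{th:6.4}$(iii)$ there exist $C,E\in\Mat_{\ell\times\ell}\mc K[\partial]$ with $CA_0+EB_0=\id$; then for any $v\in\ker_{\mc L}D$ I want to understand $(HD)v=Pv$.

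The key computation is to show that the map $\mc L^\ell\to\mc L^\ell$, $v\mapsto (HD)v$, restricted to $\ker_{\mc L}D$, has image of $\bar{\mc C}$-dimension exactly $\deg(B_0)$. I would proceed by analyzing $\ker_{\mc L}(HD)$ within $\ker_{\mc L}D$. Write $HD=P$ and note $P=A_0B_0^{-1}D$. Let $B$ be a right common multiple so that $B_0^{-1}D=\widetilde D\,\widetilde B_0^{-1}$ with $\widetilde D,\widetilde B_0$ right coprime, $D\widetilde B_0=B_0\widetilde D$ a left l.c.m.\ type identity; then $P=A_0\widetilde D\,\widetilde B_0^{-1}$, and since $P$ is a differential operator and one can track coprimeness, $\widetilde B_0$ must divide $A_0\widetilde D$ on the right in a controlled way. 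I would instead argue more directly: $v\in\ker_{\mc L}D$ means $Dv=0$; I claim $(HD)v=0$ iff $v$ lies in the image under a suitable operator of $\ker_{\mc L}(B_0)\cap(\text{something})$. The clean statement to aim for is the exact sequence of $\bar{\mc C}$-vector spaces
\begin{equation*}
0\to \ker{}_{\mc L}D\cap\ker{}_{\mc L}(HD)\to \ker{}_{\mc L}D\xrightarrow{\,HD\,}(HD)(\ker{}_{\mc L}D)\to 0\,,
\end{equation*}
so that $\dim(HD)(\ker_{\mc L}D)=\dim\ker_{\mc L}D-\dim(\ker_{\mc L}D\cap\ker_{\mc L}(HD))=\deg D-\dim(\ker_{\mc L}D\cap\ker_{\mc L}(HD))$, using Corollary \ref{DSK11:prop}.

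Hence the real content is to prove $\dim_{\bar{\mc C}}(\ker_{\mc L}D\cap\ker_{\mc L}(HD))=\deg(D)-\deg(B_0)=\deg(D)-\sdeg(H)$. For this I would use that $HD=A_0\widetilde D\widetilde B_0^{-1}$ where $D\widetilde B_0 = B_0\widetilde D$ realizes $\mathrm{right\ l.c.m.}$ or $\mathrm{left\ l.c.m.}$ appropriately and $\widetilde D$ has $\deg\widetilde D=\deg D$ (as $B_0$, $D$ on the relevant side — one must choose the coprime pairing so this holds, invoking Lemma \ref{20130703:lem2}). Then $(HD)v=0$ with $Dv=0$: writing things out, $v\in\ker_{\mc L}D$ forces $v=\widetilde B_0 w$ for some $w$ with $\widetilde D w\in\ker_{\mc L}A_0$ — here Theorem \ref{CR13} (solving $Dv = B_0 u$ type equations via the l.c.m.) is the tool. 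Tracking this, $\ker_{\mc L}D\cap\ker_{\mc L}(HD)\cong\{w: \widetilde B_0 w\in\ker D,\ A_0\widetilde D w=0\}$, and since $A_0$, $B_0$ are right coprime ($\ker_{\mc L}A_0\cap\ker_{\mc L}B_0=0$ by Theorem \ref{th:6.4}$(iv)$), a dimension count on $\widetilde D$, $A_0$, $\widetilde B_0$ yields the claimed equality.

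The main obstacle I anticipate is the bookkeeping of which l.c.m.\ (left vs.\ right) and which coprime pairing to use so that all the degree identities line up — in particular ensuring $\deg\widetilde D=\deg D$ and that the coprimeness of $A_0,B_0$ transfers to the relation between $A_0\widetilde D$ and $\widetilde B_0$. A safer alternative, which I would fall back on if the direct approach gets tangled, is to prove the formula first in the special case $D=B_0$ (where $HD=A_0$ and $(HD)(\ker_{\mc L}B_0)=A_0(\ker_{\mc L}B_0)$ has dimension $\deg B_0$ because $\ker_{\mc L}A_0\cap\ker_{\mc L}B_0=0$, so $A_0$ is injective on $\ker_{\mc L}B_0$), and then show the answer is independent of the choice of $D$: given two such $D,D'$, pass to a common right multiple and use that multiplying $D$ on the right by a non-degenerate operator changes neither side (the left side not at all, the right side because the extra kernel is killed). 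This independence argument, combined with the base case $D=B_0$, gives the result with minimal fuss.
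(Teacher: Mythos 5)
Your primary route (computing $\dim_{\bar{\mc C}}\bigl(\ker_{\mc L}D\cap\ker_{\mc L}(HD)\bigr)$ by juggling l.c.m.'s) is left incomplete, but your fallback argument is a genuine proof and is in essence the paper's own: the base case $D=B_0$ is exactly the paper's use of Theorem \ref{th:6.4}(a)(iv) (injectivity of $A_0$ on $\ker_{\mc L}B_0$) together with Corollary \ref{DSK11:prop}, and your ``independence of $D$'' step is the paper's identity $(HD)(\ker_{\mc L}D)=A_0(\ker_{\mc L}B_0)$. The only structural difference is how you compare a general $D$ with $B_0$: you pass to a common right multiple $DE=B_0E'$ via Lemma \ref{20130703:lem2}, whereas the paper invokes Theorem \ref{th:6.4}(b) to write $D=B_0E$ and $HD=A_0E$ directly, which saves one step; both are fine. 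One point you should make explicit, because it is the only place where the linear closure actually enters: when you claim that replacing $D$ by $DE$ ($E$ non-degenerate) ``changes neither side'', the inclusion $(HDE)(\ker_{\mc L}(DE))\subset(HD)(\ker_{\mc L}D)$ is indeed trivial, but the reverse inclusion requires solving $E v=k$ in $\mc L^\ell$ for every $k\in\ker_{\mc L}D$, i.e.\ surjectivity of $E$ on $\mc L^\ell$, which holds by Proposition \ref{CDSK13b:prop1} since $\mc L$ is linearly closed; your phrase ``the extra kernel is killed'' only accounts for the trivial direction, and over $\mc K$ itself (not linearly closed) the image could genuinely shrink. Finally, the dimension identity you were chasing in the primary route is essentially Proposition \ref{20130610:lem3}(a) applied to the decomposition $H=(HD)D^{-1}$, so that route can be made to work too, but it is no shorter than the fallback.
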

\begin{proof}
By assumption, 
$D\in\Mat_{\ell\times\ell}\mc K[\partial]$ is a non-degenerate matrix
such that $C=HD\in\Mat_{\ell\times\ell}\mc K[\partial]$,
hence $H=CD^{-1}$.
Let $H=AB^{-1}$,
with $A,B\in\Mat_{\ell\times\ell}\mc K[\partial]$ and $B$ non-degenerate,
be a minimal fractional decomposition for $H$.
Then, by Theorem \ref{th:6.4}(b),
there exists a non-degenerate matrix $E\in\Mat_{\ell\times\ell}\mc K[\partial]$
such that $C=AE$ and $D=BE$.
We claim that
\begin{equation}\label{20130702:eq1}
C(\ker{}_{\mc L}D)=A(\ker{}_{\mc L}B)\,.
\end{equation}
Indeed, let $y\in C(\ker_{\mc L}D)$.
Namely, $y=C(k)\in\mc L^{\ell}$, with $k\in\ker_{\mc L}D$.
Then, $E(k)\in\ker B$, and $y=C(k)=AE(k)=A(Ek)\in A(\ker_{\mc L}B)$,
proving the inclusion $\subset$.
For the opposite inclusion,
let $x\in A(\ker_{\mc L}B)$.
Namely, $x=A(h)\in\mc L^{\ell}$, with $h\in\ker_{\mc L}B$.
Since $\mc L$ is a linearly closed differential field
and $E$ is non-degenerate, by Proposition \ref{CDSK13b:prop1}
there exists $k\in\mc L^\ell$ such that $h=E(k)$.
Therefore, $D(k)=BE(k)=B(h)=0$,
and $C(k)=AE(k)=A(h)=x$,
so that $x\in C(\ker_{\mc L}D)$.

%
By Definition \ref{20130610:def}, we have $\sdeg(H)=\deg(B)$.
By Corollary \ref{DSK11:prop},
we have $\deg(B)=\dim_{\bar{\mc C}}(\ker_{\mc L}B)$.
On the other hand,
since, by assumption, $H=AB^{-1}$ is a minimal fractional decomposition,
by Theorem \ref{th:6.4}(a)(iv) we have
$\ker_{\mc L}A\cap\ker_{\mc L}B=0$,
and therefore $\dim_{\bar{\mc C}}(\ker_{\mc L}B)=\dim_{\bar{\mc C}}A(\ker_{\mc L}B)$.
The claim follows by the above observations and equation \eqref{20130702:eq1}.
\end{proof}

\begin{proposition}\label{20130610:lem2}
For $H\in\Mat_{\ell\times\ell}\mc K(\partial)$,
we have $\sdeg(H)=0$ if and only if $H\in\Mat_{\ell\times\ell}\mc K[\partial]$.
\end{proposition}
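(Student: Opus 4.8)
The plan is to prove both implications using the characterization of the singular degree via kernels that was just established. For the easy direction, suppose $H\in\Mat_{\ell\times\ell}\mc K[\partial]$. Then $H=H\cdot\id^{-1}$ is a fractional decomposition with denominator $B=\id$, which is non-degenerate of degree $0$. Since the singular degree is the minimum of $\deg(B)$ over all right fractional decompositions and it is always a nonnegative integer by Proposition \ref{cor:3.2}(a), we get $0\le\sdeg(H)\le\deg(\id)=0$, hence $\sdeg(H)=0$.

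For the converse, suppose $\sdeg(H)=0$, and let $H=AB^{-1}$ be a minimal fractional decomposition, so $\deg(B)=0$ and $B$ is non-degenerate. By Proposition \ref{cor:3.2}(b), a non-degenerate matrix differential operator of degree $0$ is invertible inside $\Mat_{\ell\times\ell}\mc K[\partial]$, so $B^{-1}\in\Mat_{\ell\times\ell}\mc K[\partial]$; therefore $H=AB^{-1}\in\Mat_{\ell\times\ell}\mc K[\partial]$. Alternatively, one could invoke Proposition \ref{20130610:lem1} with $D=B$: then $\sdeg(H)=\dim_{\bar{\mc C}}\big((HB)(\ker_{\mc L}B)\big)$, and $\sdeg(H)=0$ together with $\dim_{\bar{\mc C}}\ker_{\mc L}B=\deg(B)$ (Corollary \ref{DSK11:prop}) forces $\deg(B)=0$ since $HB=A$ is injective on $\ker_{\mc L}B$ by the coprimeness condition of Theorem \ref{th:6.4}(a)(iv); but the direct argument via Proposition \ref{cor:3.2}(b) is cleaner and I would present that one.

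I do not anticipate any serious obstacle here: the statement is essentially a repackaging of Proposition \ref{cor:3.2}(b) (invertibility in $\mc K[\partial]$ $\iff$ non-degenerate of degree zero) combined with the definition of $\sdeg$ as the minimal denominator degree. The only point requiring a moment's care is making sure the minimal fractional decomposition exists and that $\sdeg$ is well-defined and a nonnegative integer — but this is guaranteed by Theorem \ref{th:6.4} and Proposition \ref{cor:3.2}(a), both stated above. So the proof is short: exhibit the denominator $\id$ for one direction, and apply the degree-zero invertibility criterion for the other.
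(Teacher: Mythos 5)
Your proof is correct and follows the paper's argument exactly: the "if" direction is immediate from the definition of $\sdeg$ (via the denominator $\id$), and the "only if" direction is an application of Proposition \ref{cor:3.2}(b) to the denominator of a minimal fractional decomposition, which has degree zero and is therefore invertible in $\Mat_{\ell\times\ell}\mc K[\partial]$. The extra alternative via Proposition \ref{20130610:lem1} is unnecessary, as you note, and the direct argument you chose to present is the one in the paper.
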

\begin{proof}
The if part is obvious, by Definition \ref{20130610:def}.
The only if part follows from Proposition \ref{cor:3.2}(b).
\end{proof}

\begin{proposition}\label{20130725:lem1}
If $A\in\Mat_{\ell\times\ell}\mc K[\partial]$ and $H\in\Mat_{\ell\times\ell}\mc K(\partial)$,
then
$\sdeg(A+H)=\sdeg(H)$.
\end{proposition}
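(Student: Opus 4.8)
The plan is to reduce the statement to Proposition~\ref{20130610:lem1}. First I would fix a minimal fractional decomposition $H=PQ^{-1}$ with $P,Q\in\Mat_{\ell\times\ell}\mc K[\partial]$ and $Q$ non-degenerate, so that by Definition~\ref{20130610:def} we have $\sdeg(H)=\deg(Q)=\dim_{\bar{\mc C}}(\ker_{\mc L}Q)$ by Corollary~\ref{DSK11:prop}. Then $A+H=(AQ+P)Q^{-1}$, and since $AQ+P\in\Mat_{\ell\times\ell}\mc K[\partial]$, this is a (not necessarily minimal) right fractional decomposition of $A+H$ with the \emph{same} non-degenerate denominator $Q$. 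The key point is that $Q$ is a non-degenerate matrix differential operator with $(A+H)Q=AQ+P\in\Mat_{\ell\times\ell}\mc K[\partial]$, so Proposition~\ref{20130610:lem1} applies with $D=Q$ and gives
\begin{equation*}
\sdeg(A+H)=\dim_{\bar{\mc C}}\big((AQ+P)(\ker{}_{\mc L}Q)\big)\,.
\end{equation*}

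Next I would show the right-hand side equals $\dim_{\bar{\mc C}}\big(P(\ker_{\mc L}Q)\big)$. For $k\in\ker_{\mc L}Q$ we have $AQ(k)=0$, hence $(AQ+P)(k)=P(k)$; thus $(AQ+P)(\ker_{\mc L}Q)=P(\ker_{\mc L}Q)$ as subspaces of $\mc L^\ell$. Therefore $\sdeg(A+H)=\dim_{\bar{\mc C}}\big(P(\ker_{\mc L}Q)\big)$. On the other hand, applying Proposition~\ref{20130610:lem1} to $H$ itself with $D=Q$ (noting $HQ=P\in\Mat_{\ell\times\ell}\mc K[\partial]$) gives $\sdeg(H)=\dim_{\bar{\mc C}}\big(P(\ker_{\mc L}Q)\big)$. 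Comparing the two expressions yields $\sdeg(A+H)=\sdeg(H)$.

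Alternatively, and perhaps more transparently, one can argue directly at the level of minimality: since $H=PQ^{-1}$ is minimal, $P$ and $Q$ are right coprime, so by Theorem~\ref{th:6.4}(a) there is a Bezout identity $CP+DQ=\id$; then $C(AQ+P)+(D-CA)Q=CP+DQ=\id$, which shows $AQ+P$ and $Q$ are right coprime, hence $(A+H)=(AQ+P)Q^{-1}$ is itself a minimal fractional decomposition, and $\sdeg(A+H)=\deg(Q)=\sdeg(H)$. I would likely present this Bezout-based argument as the main proof, as it avoids invoking the linear closure.

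I do not anticipate a serious obstacle here: the content is entirely that adding a differential operator does not change the denominator of a minimal decomposition, and the only thing to be careful about is verifying that right coprimeness is preserved, which the Bezout manipulation makes immediate. The one point worth stating explicitly is that $AQ+P$ indeed lies in $\Mat_{\ell\times\ell}\mc K[\partial]$ (so that it is a legitimate numerator), which is clear since $A,P\in\Mat_{\ell\times\ell}\mc K[\partial]$ and $Q\in\Mat_{\ell\times\ell}\mc K[\partial]$.
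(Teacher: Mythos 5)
Your proposal is correct and, in its preferred (Bezout) form, is essentially the paper's proof: the paper writes down the same decomposition $A+H=(AB_1+A_1)B_1^{-1}$ and asserts its minimality ``clearly'', while your computation $C(AQ+P)+(D-CA)Q=\id$ is exactly the detail that justifies that assertion via Theorem~\ref{th:6.4}(a). Your first argument via Proposition~\ref{20130610:lem1} is also valid (and in fact works for an arbitrary, not necessarily minimal, fractional decomposition, since the operators $AQ+P$ and $P$ agree on $\ker_{\mc L}Q$), but it is an alternative route rather than the paper's.
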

\begin{proof}
If $H=A_1B_1^{-1}$ is a minimal fractional decomposition for $H$,
then, clearly, $A+H=(AB_1+A_1)B_1^{-1}$ is a minimal fractional decomposition for $A+H$.
The claim follows.
\end{proof}

\begin{proposition}\label{20130702:lem1}
For $H\in\Mat_{\ell\times\ell}\mc K(\partial)$,
we have $\sdeg(H)=\sdeg(H^*)$.
\end{proposition}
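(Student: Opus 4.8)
The plan is to exploit the adjoint symmetry already recorded in Remark~\ref{rem:3.3}, namely that a matrix differential operator $B$ is non-degenerate if and only if $B^*$ is, and in that case $\deg(B^*)=\deg(B)$ (the sign $(-1)^{\deg(B)}$ only affects $\det$, not the degree). The key observation is that taking adjoints interchanges left and right fractional decompositions: if $H=AB^{-1}$ is a right fractional decomposition, then $H^*=(B^{-1})^*A^*=(B^*)^{-1}A^*$, which is a \emph{left} fractional decomposition of $H^*$ with denominator $B^*$ of the same degree as $B$. Conversely a left fractional decomposition of $H^*$ yields, by the same adjoint operation, a right fractional decomposition of $H$.

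First I would take $H=AB^{-1}$ to be a minimal right fractional decomposition of $H$, so that $\sdeg(H)=\deg(B)$ by Definition~\ref{20130610:def}. Applying the adjoint, $H^*=(B^*)^{-1}A^*$ is a left fractional decomposition of $H^*$ with $\deg(B^*)=\deg(B)=\sdeg(H)$. I would then invoke Theorem~\ref{th:6.4}(c), which asserts that a minimal \emph{left} fractional decomposition of $H^*$ has a denominator of the same degree as a minimal \emph{right} fractional decomposition of $H^*$, i.e. of degree $\sdeg(H^*)$. Since the left decomposition $(B^*)^{-1}A^*$ we produced has denominator degree $\sdeg(H)$, and the minimal one has degree $\sdeg(H^*)$, minimality of the latter among \emph{all} left decompositions gives $\sdeg(H^*)\le\sdeg(H)$. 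The reverse inequality $\sdeg(H)\le\sdeg(H^*)$ follows by the symmetric argument applied to $H^*$ in place of $H$, using $(H^*)^*=H$. Combining the two inequalities yields $\sdeg(H)=\sdeg(H^*)$.

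Alternatively — and perhaps more cleanly — one can avoid relying on part~(c) of Theorem~\ref{th:6.4} and argue directly: from the minimal right decomposition $H=AB^{-1}$, the pair $A,B$ is right coprime by Theorem~\ref{th:6.4}(a), so by the Bezout identity (iii) there exist $C,D$ with $CA+DB=\id$; taking adjoints gives $A^*C^*+B^*D^*=\id$, which is a \emph{left} Bezout identity showing $A^*$ and $B^*$ are left coprime. Hence $(B^*)^{-1}A^*$ is a minimal left fractional decomposition of $H^*$, and by Theorem~\ref{th:6.4}(c) its denominator degree $\deg(B^*)=\deg(B)=\sdeg(H)$ equals $\sdeg(H^*)$. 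The main (minor) obstacle I anticipate is purely bookkeeping: making sure the adjoint genuinely converts right coprimeness of $(A,B)$ into left coprimeness of $(A^*,B^*)$ rather than the wrong-handed notion, and checking that $\deg$ is adjoint-invariant; both are immediate from Remark~\ref{rem:3.3} and the anti-homomorphism property $(PQ)^*=Q^*P^*$, so no real difficulty arises.
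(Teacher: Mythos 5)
Your proposal is correct and follows essentially the same route as the paper: pass to the adjoint to turn the minimal right fractional decomposition $H=AB^{-1}$ into the left fractional decomposition $H^*=(B^*)^{-1}A^*$, check it is minimal (you unpack, via the degree-preserving bijection and the Bezout identity, what the paper dismisses with ``clearly''), and conclude with Theorem \ref{th:6.4}(c) and Remark \ref{rem:3.3}. No gap to report.
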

\begin{proof}
Clearly, $H=AB^{-1}$ is a minimal right fractional decomposition for $H$,
if and only if $H^*={B^*}^{-1}A^*$ is a minimal left fractional decomposition for $H^*$.
Therefore, by Theorem \ref{th:6.4}(c) and Remark \ref{rem:3.3},
we obtain that $\sdeg(H^*)=\deg(B^*)=\deg(B)$.
\end{proof}

\begin{proposition}\label{20130725:lem2}
Let $p_1,\dots,p_s$ be positive integers such that $p_1+\dots+p_s=\ell$,
and let $H=\big(H_{ij}\big)_{i,j=1}^s$ be a block form for 
the rational $\ell\times\ell$ matrix pseudodifferential operator $H$,
where $H_{ij}\in\Mat_{p_i\times p_j}\mc K(\partial)$ for every $i,j=1,\dots,s$.
Assume, moreover, that $H_{ij}\in\Mat_{p_1\times p_j}\mc K[\partial]$ if $i\neq j$.
Then
$$
\sdeg(H)=\sdeg(H_{11})+\dots+\sdeg(H_{ss})
\,.
$$
\end{proposition}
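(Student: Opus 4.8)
\textbf{Proof plan for Proposition \ref{20130725:lem2}.}

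The plan is to reduce the general statement to the two-block case $s=2$ by induction, and then to handle $s=2$ directly via an explicit minimal fractional decomposition built from the minimal decompositions of the diagonal blocks. For the inductive step, note that if $H=\big(H_{ij}\big)_{i,j=1}^s$ satisfies the hypothesis (off-diagonal blocks are differential operators), then grouping the first $s-1$ block rows and columns together against the last one puts $H$ in a $2\times 2$ block form $\left(\begin{smallmatrix} H' & P \\ Q & H_{ss}\end{smallmatrix}\right)$ in which $P,Q$ have differential-operator entries; here $H'=\big(H_{ij}\big)_{i,j=1}^{s-1}$ itself satisfies the hypothesis of the proposition with $s-1$ blocks. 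So it suffices to prove $\sdeg(H)=\sdeg(H')+\sdeg(H_{ss})$ for $s=2$, and then invoke the inductive hypothesis on $H'$.

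For the case $s=2$, write $H=\left(\begin{smallmatrix} H_{11} & P \\ Q & H_{22}\end{smallmatrix}\right)$ with $P\in\Mat_{p_1\times p_2}\mc K[\partial]$, $Q\in\Mat_{p_2\times p_1}\mc K[\partial]$, and let $H_{11}=A_1B_1^{-1}$, $H_{22}=A_2B_2^{-1}$ be minimal fractional decompositions, with $B_1,B_2$ non-degenerate of degrees $\sdeg(H_{11})$, $\sdeg(H_{22})$ respectively. Taking $B=\left(\begin{smallmatrix} B_1 & 0 \\ 0 & B_2 \end{smallmatrix}\right)$, which is non-degenerate with $\deg(B)=\deg(B_1)+\deg(B_2)$ by property $(ii)$ of the Dieudonn\'e determinant, one computes $HB=\left(\begin{smallmatrix} A_1 & PB_2 \\ QB_1 & A_2 \end{smallmatrix}\right)\in\Mat_{\ell\times\ell}\mc K[\partial]$, so $H=(HB)B^{-1}$ is a (not necessarily minimal) fractional decomposition; this already gives $\sdeg(H)\le\deg(B_1)+\deg(B_2)=\sdeg(H_{11})+\sdeg(H_{22})$. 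For the reverse inequality I would use Proposition \ref{20130610:lem1}: with $D=B$ as above, $\sdeg(H)=\dim_{\bar{\mc C}}\big((HB)(\ker_{\mc L}B)\big)$, and $\ker_{\mc L}B=\ker_{\mc L}B_1\oplus\ker_{\mc L}B_2$. Evaluating $HB$ on a vector $\left(\begin{smallmatrix} k_1 \\ k_2 \end{smallmatrix}\right)$ with $k_i\in\ker_{\mc L}B_i$ gives $\left(\begin{smallmatrix} A_1 k_1 + PB_2 k_2 \\ QB_1 k_1 + A_2 k_2 \end{smallmatrix}\right)=\left(\begin{smallmatrix} A_1 k_1 \\ A_2 k_2 \end{smallmatrix}\right)$, since $B_2k_2=0$ and $B_1k_1=0$. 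Hence $(HB)(\ker_{\mc L}B)=A_1(\ker_{\mc L}B_1)\oplus A_2(\ker_{\mc L}B_2)$, whose dimension is $\dim_{\bar{\mc C}}A_1(\ker_{\mc L}B_1)+\dim_{\bar{\mc C}}A_2(\ker_{\mc L}B_2)$. By the minimality of the decompositions $H_{ii}=A_iB_i^{-1}$ and Theorem \ref{th:6.4}(a)(iv), $\ker_{\mc L}A_i\cap\ker_{\mc L}B_i=0$, so $\dim_{\bar{\mc C}}A_i(\ker_{\mc L}B_i)=\dim_{\bar{\mc C}}\ker_{\mc L}B_i=\deg(B_i)=\sdeg(H_{ii})$ by Corollary \ref{DSK11:prop} and Definition \ref{20130610:def}. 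This yields $\sdeg(H)=\sdeg(H_{11})+\sdeg(H_{22})$, completing the $s=2$ case and hence the induction.

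The main obstacle I anticipate is making sure that the off-diagonal terms really drop out when $HB$ is evaluated on $\ker_{\mc L}B$; this is precisely where the hypothesis $H_{ij}\in\Mat\mc K[\partial]$ for $i\ne j$ is used, together with the fact that multiplying a differential operator by $B_i$ before evaluating on $\ker B_i$ gives zero — something that would fail if the off-diagonal blocks had genuine pseudodifferential parts, since then $PB_2$ need not be a differential operator applied after $B_2$. A secondary point requiring a little care is the direct-sum decomposition $\ker_{\mc L}B=\ker_{\mc L}B_1\oplus\ker_{\mc L}B_2$ and the corresponding decomposition of the image: these hold because $B$ is block-diagonal, but one should note that the images $A_1(\ker_{\mc L}B_1)\subset\mc L^{p_1}$ and $A_2(\ker_{\mc L}B_2)\subset\mc L^{p_2}$ live in complementary coordinate subspaces of $\mc L^{\ell}$, so their sum is automatically direct and the dimension adds. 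No deeper difficulty is expected beyond bookkeeping.
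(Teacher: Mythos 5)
Your argument is correct and is essentially the paper's own proof: both build the block-diagonal denominator $B=\mathrm{diag}(B_1,\dots,B_s)$ from fractional decompositions $H_{ii}=A_iB_i^{-1}$, note $HB\in\Mat_{\ell\times\ell}\mc K[\partial]$, and compute $\sdeg(H)=\dim_{\bar{\mc C}}\big((HB)(\ker_{\mc L}B)\big)$ via Proposition \ref{20130610:lem1}, with the off-diagonal differential blocks annihilating $\ker_{\mc L}B$. The only differences are cosmetic: the paper handles all $s$ blocks at once (your reduction to $s=2$ by induction is unnecessary) and gets $\dim_{\bar{\mc C}}A_i(\ker_{\mc L}B_i)=\sdeg(H_{ii})$ by applying Proposition \ref{20130610:lem1} to each $H_{ii}$ with an arbitrary fractional decomposition, rather than insisting on minimality and invoking Theorem \ref{th:6.4}(a)(iv) and Corollary \ref{DSK11:prop} as you do.
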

\begin{proof}
For every $i=1,\dots,s$, let $H_{ii}=A_iB_i^{-1}$ be a fractional decomposition 
for $H_{ii}\in\Mat_{p_i\times p_i}\mc K(\partial)$.
The matrix
$$
B=
\left(\begin{array}{ccc}
B_1 & & 0 \\
& \ddots & \\
0 & & B_s
\end{array}\right)
\in\Mat{}_{\ell\times\ell}\mc K[\partial]
$$
is clearly non-degenerate.
Then $HB$ lies in $\Mat_{\ell\times\ell}\mc K[\partial]$,
$\ker B=\ker(B_1)\oplus\dots\oplus\ker(B_s)$,
and $HB(\ker(B_1)\oplus\dots\oplus\ker(B_s))=A_1\ker(B_1)\oplus\dots\oplus A_s\ker(B_s)$.
The claim follows by Proposition \ref{20130610:lem1}.
\end{proof}

\begin{proposition}\label{20130610:lem3}
Let $H\in\Mat_{\ell\times\ell}\mc K(\partial)$.
\begin{enumerate}[(a)]
\item
If $H=AB^{-1}$ is a right fractional decomposition for $H$, 
with $A,B\in\Mat_{\ell\times\ell}\mc K[\partial]$ and $B$ non-degenerate,
then
\begin{equation}\label{20130702:eq2}
\sdeg(H)=\deg(B)-\dim_{\bar{\mc C}}(\ker{}_{\mc L}A\cap\ker{}_{\mc L}B)
\,.
\end{equation}
\item
If $H=B^{-1}A$ is a left fractional decomposition for $H$, 
with $A,B\in\Mat_{\ell\times\ell}\mc K[\partial]$ and $B$ non-degenerate,
then
\begin{equation}\label{20130702:eq3}
\sdeg(H)=\deg(B)-\dim_{\bar{\mc C}}(\ker{}_{\mc L}A^*\cap\ker{}_{\mc L}B^*)
\,.
\end{equation}
\end{enumerate}
\end{proposition}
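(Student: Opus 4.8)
The plan is to reduce both statements to Proposition \ref{20130610:lem1} (applied with a suitable non-degenerate $D$), together with Corollary \ref{DSK11:prop} and a dimension count. For part (a): given the right fractional decomposition $H=AB^{-1}$, take $D=B$, so that $HD=A\in\Mat_{\ell\times\ell}\mc K[\partial]$. Proposition \ref{20130610:lem1} then gives $\sdeg(H)=\dim_{\bar{\mc C}}\big(A(\ker_{\mc L}B)\big)$. Now I would use the elementary linear-algebra identity
\begin{equation*}
\dim_{\bar{\mc C}} A(\ker{}_{\mc L}B)
=\dim_{\bar{\mc C}}(\ker{}_{\mc L}B)
-\dim_{\bar{\mc C}}(\ker{}_{\mc L}A\cap\ker{}_{\mc L}B)
\,,
\end{equation*}
which is just the rank–nullity theorem applied to the restriction of $A$ to the finite-dimensional $\bar{\mc C}$-vector space $\ker_{\mc L}B$. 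Finally, Corollary \ref{DSK11:prop} gives $\dim_{\bar{\mc C}}(\ker_{\mc L}B)=\deg(B)$, and substituting yields \eqref{20130702:eq2}.

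For part (b): given the left fractional decomposition $H=B^{-1}A$, I would pass to adjoints. One has $H^*=A^*(B^*)^{-1}$, which is a right fractional decomposition of $H^*$ with $B^*$ non-degenerate (by Remark \ref{rem:3.3}, $\det(B^*)\neq0$, and $\deg(B^*)=\deg(B)$ up to sign — in fact $\deg(B^*)$ equals $\deg(B)$ as a nonnegative integer since $B^*\in\Mat_{\ell\times\ell}\mc K[\partial]$ and Remark \ref{rem:3.3} together with Proposition \ref{cor:3.2}(a) pin down its degree). Applying part (a) to $H^*=A^*(B^*)^{-1}$ gives
\begin{equation*}
\sdeg(H^*)=\deg(B^*)-\dim_{\bar{\mc C}}(\ker{}_{\mc L}A^*\cap\ker{}_{\mc L}B^*)
\,.
\end{equation*}
Then Proposition \ref{20130702:lem1} gives $\sdeg(H)=\sdeg(H^*)$, and $\deg(B^*)=\deg(B)$, yielding \eqref{20130702:eq3}.

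The only genuine subtlety — and the step I would be most careful about — is the claim $\deg(B^*)=\deg(B)$ as nonnegative integers. Remark \ref{rem:3.3} only asserts $\det(B^*)=(-1)^{\deg(B)}\det(B)$, hence $\deg(B^*)=\deg(B)$ as \emph{integers}; since $B^*$ is again a matrix differential operator, Proposition \ref{cor:3.2}(a) confirms this integer is in $\mb Z_+$, so there is no sign ambiguity. A second point to handle cleanly is that Proposition \ref{20130610:lem1} as stated computes $\sdeg(H)$ as $\dim_{\bar{\mc C}}\big((HD)(\ker_{\mc L}D)\big)$, and I am using it with $HD$ equal to the numerator $A$ (resp.\ $A^*$) of the \emph{given} decomposition, not of a minimal one — but this is exactly the generality in which Proposition \ref{20130610:lem1} is stated ($D$ is any non-degenerate operator with $HD$ polynomial), so no extra work is needed. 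Everything else is the rank–nullity bookkeeping indicated above.
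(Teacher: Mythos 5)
Your proposal is correct, and for part (a) it takes a genuinely different (and slightly slicker) route than the paper. The paper proves (a) directly from Theorem \ref{th:6.4}(b): it writes $A=A_0E$, $B=B_0E$ with $A_0B_0^{-1}$ minimal and $E$ non-degenerate, notes $\deg(B)=\deg(B_0)+\deg(E)$, uses minimality (via Theorem \ref{th:6.4}(a)(iv)) to identify $\ker_{\mc L}A\cap\ker_{\mc L}B=\ker_{\mc L}E$, and then applies Corollary \ref{DSK11:prop} to $E$. You instead invoke Proposition \ref{20130610:lem1} with $D=B$, so that $\sdeg(H)=\dim_{\bar{\mc C}}A(\ker_{\mc L}B)$, and finish with rank--nullity for the $\bar{\mc C}$-linear restriction $A|_{\ker_{\mc L}B}$ (whose kernel is exactly $\ker_{\mc L}A\cap\ker_{\mc L}B$) together with $\dim_{\bar{\mc C}}\ker_{\mc L}B=\deg(B)$; this is legitimate and non-circular, since Proposition \ref{20130610:lem1} is established earlier and independently, and it buys you a shorter argument that never touches the comparison matrix $E$ explicitly (the price being that the same $E$-argument is hidden inside the proof of Proposition \ref{20130610:lem1}). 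Your part (b) coincides with the paper's: pass to adjoints, apply part (a) to $H^*=A^*(B^*)^{-1}$, and use Proposition \ref{20130702:lem1} together with $\deg(B^*)=\deg(B)$ from Remark \ref{rem:3.3}; your care about the sign in $\det(B^*)=(-1)^{\deg(B)}\det(B)$ is warranted but resolves exactly as you say, since the sign affects only $\det_1$ and not the exponent of $\xi$.
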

\begin{proof}
Let $H=A_0B_0^{-1}$ be a minimal fractional decomposition for $H$,
with $A_0,B_0\in\Mat_{\ell\times\ell}\mc K[\partial]$ and $B_0$ non-degenerate.
By Theorem \ref{th:6.4}(b)
there exists a non-degenerate $E\in\Mat_{\ell\times\ell}\mc K[\partial]$
such that $A=A_0E$ and $B=B_0E$.
Since $B_0$ and $E$ are both non-degenerate, and $B=B_0E$,
we have
\begin{equation}\label{20130702:eq4}
\deg(B)=\deg(B_0)+\deg(E)\,.
\end{equation}
By assumption $H=A_0B_0^{-1}$ is a minimal fraction,
and therefore by Theorem \ref{th:6.4}(a)(iv) we have
$\ker_{\mc L}A_0\cap\ker_{\mc L}B_0=0$.
It immediately follows that
$\ker{}_{\mc L}A\cap\ker{}_{\mc L}B=\ker{}_{\mc L}E$.
Therefore, by Corollary \ref{DSK11:prop},
\begin{equation}\label{20130702:eq5}
\deg(E)=\dim_{\bar{\mc C}}\big(\ker{}_{\mc L}A\cap\ker{}_{\mc L}B\big)
\,.
\end{equation}
Equation \eqref{20130702:eq2}
follows from equations \eqref{20130702:eq4} and \eqref{20130702:eq5},
and the fact that, by Definition \ref{20130610:def}, $\sdeg(H)=\deg(B_0)$.

In order to prove part (b),
note that $H^*=A^*{B^*}^{-1}$.
Therefore, by part (a),
$\sdeg(H^*)=\deg(B^*)-\dim_{\bar{\mc C}}(\ker{}_{\mc L}A^*\cap\ker{}_{\mc L}B^*)$.
Equation \eqref{20130702:eq3}
follows from Proposition \ref{20130702:lem1} and the fact that, by Remark \ref{rem:3.3},
$\deg(B)=\deg(B^*)$.
\end{proof}

\begin{proposition}\label{20130725:lem3}
Let $H=AB^{-1}C\in\Mat_{\ell\times\ell}\mc K(\partial)$,
where $A,B,C\in\Mat_{\ell\times\ell}\mc K[\partial]$ are matrix differential operators,
and $B$ is non-degenerate.
\begin{enumerate}[(a)]
\item
If $B$ and $C$ are left coprime, 
then $\sdeg(H)=\deg(B)-\dim_{\bar{\mc C}}(\ker_{\mc L} A\cap\ker_{\mc L} B)$.
\item
If $A$ and $B$ are right coprime, 
then $\sdeg(H)=\deg(B)-\dim_{\bar{\mc C}}(\ker_{\mc L} B^*\cap\ker_{\mc L} C^*)$.
\item
If $A$ and $B$ are right coprime and $B$ and $C$ are left coprime,
then $H=AB^{-1}C$ is a minimal rational expression for $H$, i.e. $\sdeg(H)=\deg(B)$.
\end{enumerate}
\end{proposition}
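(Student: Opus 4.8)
The plan is to derive all three parts from the factorization Lemma \ref{20130801:lem} together with the singular-degree formulas in Proposition \ref{20130610:lem3}. First I would apply Lemma \ref{20130801:lem} to the two-term product $AB^{-1}C$ (taking $n=2$, with $A_1=A,B_1=B,A_2=C,B_2=\id$), obtaining $X_1,X_2\in\Mat_{\ell\times\ell}\mc K[\partial]$ with $X_2$ non-degenerate, $BX_1=CX_2$, and the identity $AB^{-1}C=(AX_1)X_2^{-1}$; concretely $X_2=\widetilde B$ and $X_1=\widetilde A$ come from the right l.c.m.\ $B\widetilde A=C\widetilde B$ of $B$ and $C$. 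Thus $H=(A\widetilde A)\widetilde B^{-1}$ is a genuine right fractional decomposition of $H$, to which Proposition \ref{20130610:lem3}(a) applies, giving $\sdeg(H)=\deg(\widetilde B)-\dim_{\bar{\mc C}}(\ker_{\mc L}(A\widetilde A)\cap\ker_{\mc L}\widetilde B)$.

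For part (a), the hypothesis that $B$ and $C$ are left coprime lets me invoke Lemma \ref{20130703:lem2}(a): in the right l.c.m.\ $B\widetilde A=C\widetilde B$ we get $\deg(\widetilde B)=\deg(B)$, and moreover $\widetilde A$ and $\widetilde B$ are right coprime — in fact $\widetilde A$ should be invertible in $\Mat_{\ell\times\ell}\mc K[\partial]$ (it has degree $\deg(C)-\deg(\widetilde B)+\dots$; more cleanly, since the l.c.m.\ of a left-coprime pair $B,C$ is $BC'$ with $C'$ of the shape making $\widetilde A$ a ``cofactor'' — I would instead note that $C\widetilde B=B\widetilde A$ together with left coprimeness of $B,C$ forces $\widetilde A=C'D$, $\widetilde B=B'D$ appropriately). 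The cleanest route: since $\widetilde B$ is non-degenerate of the same degree as $B$ and $B\widetilde A=C\widetilde B$, the map $X_1=\widetilde A$ satisfies $\ker_{\mc L}(A\widetilde A)\cap\ker_{\mc L}\widetilde B = \widetilde B^{-1}$-image considerations reduce $\ker_{\mc L}(A\widetilde A)\cap\ker_{\mc L}\widetilde B$ to $\widetilde A(\ker_{\mc L}(\text{something}))$; using that $\widetilde A$ restricted to $\ker_{\mc L}\widetilde B$ is injective with image $\ker_{\mc L}B$ (this is exactly the content of Theorem \ref{CR13} applied to the left-coprime pair $B,C$, read backwards), one gets $\ker_{\mc L}(A\widetilde A)\cap\ker_{\mc L}\widetilde B\cong\ker_{\mc L}A\cap\ker_{\mc L}B$, yielding the formula. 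Part (b) follows from part (a) by adjunction: $H^*=C^*(B^*)^{-1}A^*$, and $A,B$ right coprime is equivalent to $B^*,A^*$ left coprime, so part (a) applied to $H^*$ gives $\sdeg(H^*)=\deg(B^*)-\dim_{\bar{\mc C}}(\ker_{\mc L}C^*\cap\ker_{\mc L}B^*)$; then Proposition \ref{20130702:lem1} ($\sdeg(H)=\sdeg(H^*)$) and Remark \ref{rem:3.3} ($\deg(B)=\deg(B^*)$) finish it. Part (c) is then immediate: under both coprimeness hypotheses, part (a) gives $\sdeg(H)=\deg(B)-\dim_{\bar{\mc C}}(\ker_{\mc L}A\cap\ker_{\mc L}B)$, and right coprimeness of $A,B$ together with Theorem \ref{th:6.4}(a)(iv) forces $\ker_{\mc L}A\cap\ker_{\mc L}B=0$, so $\sdeg(H)=\deg(B)$.

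The main obstacle is part (a): establishing the isomorphism $\ker_{\mc L}(A\widetilde A)\cap\ker_{\mc L}\widetilde B\cong\ker_{\mc L}A\cap\ker_{\mc L}B$ cleanly. The key point is that $\widetilde A$ maps $\ker_{\mc L}\widetilde B$ bijectively onto $\ker_{\mc L}B$: surjectivity is Theorem \ref{CR13} (every solution of $B y = C x$ with $y\in\ker_{\mc L}B$, i.e.\ $x=0$, has $y=\widetilde A z$), injectivity on $\ker_{\mc L}\widetilde B$ holds because $\widetilde B z=0$ and $\widetilde A z=0$ would make $z\in\ker_{\mc L}\widetilde A\cap\ker_{\mc L}\widetilde B=0$ by right coprimeness of $\widetilde A,\widetilde B$ and Theorem \ref{th:6.4}(a)(iv). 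Once this bijection is in hand, $v\in\ker_{\mc L}\widetilde B$ lies in $\ker_{\mc L}(A\widetilde A)$ iff $\widetilde A v\in\ker_{\mc L}A$ iff (transporting via the bijection) the corresponding element of $\ker_{\mc L}B$ lies in $\ker_{\mc L}A$, i.e.\ in $\ker_{\mc L}A\cap\ker_{\mc L}B$. I would also need to double-check the degenerate edge cases (e.g.\ $C$ degenerate, or $\widetilde A$ degenerate), but since $B$ is assumed non-degenerate and we only use $\widetilde B$ non-degenerate from Lemma \ref{20130703:lem2}, these are handled automatically. Everything else is bookkeeping with Proposition \ref{20130610:lem3} and adjunction.
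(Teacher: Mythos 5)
Your proposal is correct, but it reaches the result by a route that differs from the paper's in parts (a) and (c). You prove (a) directly: writing the right l.c.m.\ of $B$ and $C$ as $B\widetilde A=C\widetilde B$, so that $H=(A\widetilde A)\widetilde B^{-1}$ with $\deg(\widetilde B)=\deg(B)$ by Lemma \ref{20130703:lem2}(a), you apply Proposition \ref{20130610:lem3}(a) and identify $\dim_{\bar{\mc C}}\big(\ker_{\mc L}(A\widetilde A)\cap\ker_{\mc L}\widetilde B\big)$ with $\dim_{\bar{\mc C}}(\ker_{\mc L}A\cap\ker_{\mc L}B)$ via the bijection $\widetilde A:\ker_{\mc L}\widetilde B\to\ker_{\mc L}B$ (surjectivity from Theorem \ref{CR13} applied to the pair $(X,Y)=(0,y)$, $y\in\ker_{\mc L}B$; injectivity from right coprimeness of $\widetilde A,\widetilde B$ and Theorem \ref{th:6.4}(a)); then (c) is a one-line corollary of (a). The paper argues in the opposite order: it first proves (c) by checking directly that $(AC_1)B_1^{-1}$, with $BC_1=CB_1$ the right l.c.m., is a minimal fractional decomposition (i.e.\ $\ker_{\mc L}(AC_1)\cap\ker_{\mc L}B_1=0$, using Theorem \ref{th:6.4}(a) twice), and then deduces (a) by splitting off the right greatest common divisor $D$ of $A$ and $B$, with $\deg(D)=\dim_{\bar{\mc C}}(\ker_{\mc L}A\cap\ker_{\mc L}B)$ by Corollary \ref{DSK11:prop} and $B_0,C$ still left coprime, so that (c) applies to $A_0B_0^{-1}C$; part (b) is handled identically in both proofs (adjunction plus Proposition \ref{20130702:lem1} and Remark \ref{rem:3.3}). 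Your route buys an immediate (c) but requires the kernel-transport argument and hence Theorem \ref{CR13} over $\mc L$ rather than $\mc K$ --- this is legitimate and is exactly how the paper itself uses \ref{CR13} (e.g.\ in Lemma \ref{20130801:lem1}) --- while the paper's route avoids both \ref{CR13} and Proposition \ref{20130610:lem3}, at the cost of the extra g.c.d.\ step. One small caveat: your passing suggestion in (a) that $\widetilde A$ ``should be invertible'' in $\Mat_{\ell\times\ell}\mc K[\partial]$ is false in general (its degree is governed by $C$), but you discard that line yourself, and the bijection argument you settle on is sound; invoking Lemma \ref{20130801:lem} is also redundant here, since Lemma \ref{20130703:lem2}(a) already gives everything you use.
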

\begin{proof}
We start proving claim (c) (which is a special case of (a) and (b)).
Let $BC_1=CB_1$ be the right l.c.m. of $B$ and $C$.
In particular,
$B_1$ and $C_1$ are right coprime.
Moreover, since, by assumption, $B$ and $C$ are left coprime,
we have by Lemma \ref{20130703:lem2}(a)
that $\deg(B_1)=\deg(B)$.
We then have $H=(AC_1)B_1^{-1}$,
and we claim that this is a minimal fractional decomposition for $H$
(so that $\sdeg(H)=\deg(B_1)=\deg(B)$.)
Tho do so, it suffices to prove, by Theorem \ref{th:6.4}(a),
that $\ker_{\mc L}(AC_1)\cap\ker_{\mc L}(B_1)=0$.
Indeed, let $F\in\ker_{\mc L}(AC_1)\cap\ker_{\mc L}(B_1)$.
We have 
\begin{equation}\label{20130725:eq1}
AC_1F=0
\,\text{ and }\,
B_1F=0
\,.
\end{equation}
Applying $C$ to the second equation, we get 
\begin{equation}\label{20130725:eq2}
BC_1F=CB_1F=0\,.
\end{equation}
Combining the first equation in \eqref{20130725:eq1} and equation \eqref{20130725:eq2},
we get that $C_1F\in\ker_{\mc L}A\cap\ker_{\mc L}B=0$, since, by assumption,
$A$ and $B$ are right coprime.
But then, by the second equation in \eqref{20130725:eq1} we get that
$F\in\ker_{\mc L}B_1\cap\ker_{\mc L}C_1=0$,
since $B_1$ and $C_1$ are right coprime as well.
This completes the proof of part (c).

Next, we prove part (a).
Let $D\in\Mat_{\ell\times\ell}\mc K[\partial]$ be the right greatest common divisor of $A$ and $B$.
In other words, $D$ is non-degenerate, $A=A_0D$, $B=B_0D$,
and $A_0$ and $B_0$ are right coprime.
It is immediate to cheek that
$\ker_{\mc L}D=\ker_{\mc L}A\cap\ker_{\mc L}B$.
Hence, by Corollary \ref{DSK11:prop}, we have
\begin{equation}\label{20130725:eq3}
\deg(D)=\dim_{\bar{\mc C}}(\ker{}_{\mc L}A\cap\ker{}_{\mc L}B)
\,.
\end{equation}
Since, by assumption, $B$ and $C$ are left coprime,
we have, a fortiori, that $B_0$ and $C$ are left coprime as well.
Hence, the expression $H=A_0B_0^{-1}C$ satisfies all the assumptions of part (c),
and we conclude that $\sdeg(H)=\deg(B_0)$.
Claim (a) follows from equation \eqref{20130725:eq3}
and the fact that $\deg(B)=\deg(B_0)+\deg(D)$.

Finally, part (b) follows from part (a) and Proposition \ref{20130702:lem1}.
\end{proof}

\begin{proposition}\label{20130703:lem3}
For $H,K\in\Mat_{\ell\times\ell}\mc K(\partial)$, we have
\begin{enumerate}[(a)]
\item
$\sdeg(HK)\leq\sdeg(H)+\sdeg(K)$;
\item
$\sdeg(H+K)\leq\sdeg(H)+\sdeg(K)$.
\end{enumerate}
\end{proposition}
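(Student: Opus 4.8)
\textbf{Proof proposal for Proposition \ref{20130703:lem3}.}

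The plan is to reduce both statements to the subadditivity of the degree of least common multiples, i.e. to Lemma \ref{20130705:lem2}, by producing for $HK$ (resp. $H+K$) a fractional decomposition whose denominator has degree $\sdeg(H)+\sdeg(K)$; since the singular degree is the \emph{minimal} such degree (Definition \ref{20130610:def}), the inequality follows immediately. So the whole content is the explicit construction of these fractional decompositions.

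For part (a), I would start from minimal fractional decompositions $H=A_1B_1^{-1}$ and $K=A_2B_2^{-1}$, so $HK=A_1B_1^{-1}A_2B_2^{-1}$. This is exactly a rational expression of the shape treated in Lemma \ref{20130801:lem} (with $n=2$): that lemma gives $X_1,X_2$ with $X_2$ non-degenerate and $B_1X_1=A_2X_2$, so that $HK=(A_1X_1)(B_2X_2)^{-1}$. By construction (the $X_i$'s are built from the right l.c.m. of $B_1$ and $A_2$), $X_2=\widetilde B_1$ where $B_1\widetilde A_2=A_2\widetilde B_1$ is that right l.c.m., hence $\deg(X_2)=\deg(\widetilde B_1)\le\deg(B_1)$ by Lemma \ref{20130703:lem2}(a). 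Therefore $\deg(B_2X_2)=\deg(B_2)+\deg(X_2)\le\deg(B_2)+\deg(B_1)=\sdeg(K)+\sdeg(H)$, and since $HK=(A_1X_1)(B_2X_2)^{-1}$ is \emph{a} right fractional decomposition of $HK$, minimality gives $\sdeg(HK)\le\deg(B_2X_2)\le\sdeg(H)+\sdeg(K)$.

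For part (b), again take minimal decompositions $H=A_1B_1^{-1}$, $K=A_2B_2^{-1}$. Let $B=\text{right l.c.m.}(B_1,B_2)=B_1C_1=B_2C_2$; then $H+K=A_1C_1B^{-1}+A_2C_2B^{-1}=(A_1C_1+A_2C_2)B^{-1}$, which is a right fractional decomposition of $H+K$ with denominator $B$. By Lemma \ref{20130705:lem2}, $\deg(B)\le\deg(B_1)+\deg(B_2)=\sdeg(H)+\sdeg(K)$, and minimality of the singular degree yields $\sdeg(H+K)\le\deg(B)\le\sdeg(H)+\sdeg(K)$.

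The main (and really only) obstacle is bookkeeping: in part (a) one must check that the denominator coming out of Lemma \ref{20130801:lem} genuinely has degree bounded by $\deg(B_1)+\deg(B_2)$, which requires tracing that $X_n$ in that lemma is the last $\widetilde B$ produced by the iterated right l.c.m. construction and invoking the degree bound $\deg(\widetilde B_1)\le\deg(B_1)$ from Lemma \ref{20130703:lem2}(a); for $n=2$ this is transparent. Everything else is the multiplicativity $\deg(XY)=\deg(X)+\deg(Y)$ for non-degenerate matrices and the defining minimality property of $\sdeg$. No linear-closedness or kernel arguments are needed here, though one could alternatively deduce (b) from (a) together with Proposition \ref{20130725:lem1} by writing $H+K$ via a block $2\times2$ manipulation; I would keep the direct l.c.m. argument as it is shorter.
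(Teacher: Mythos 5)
Your proposal is correct and follows essentially the same route as the paper: for (a) the paper likewise rewrites $HK=A\widetilde{C}(D\widetilde{B})^{-1}$ via the right l.c.m. of the inner denominator and numerator (your use of Lemma \ref{20130801:lem} with $n=2$ is exactly this step, with the same degree bound from Lemma \ref{20130703:lem2}(a)), and for (b) it likewise puts $H+K$ over a common right denominator of degree at most $\sdeg(H)+\sdeg(K)$ and concludes by the minimality defining $\sdeg$.
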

\begin{proof}
Let $H=AB^{-1}$ and $K=CD^{-1}$ be minimal fractional decompositions 
for $H$ and $K$ respectively,
so that, by definition, $\sdeg(H)=\deg(B)$ and $\sdeg(K)=\deg(D)$.
By Lemma \ref{20130703:lem2}(a),
there exist right corpime matrices 
$\widetilde{B},\widetilde{C}\in\Mat_{\ell\times\ell}\mc K[\partial]$
such that
$\widetilde{B}$ is non-degenerate with $\deg(\widetilde{B})\leq\deg(B)$,
and right l.c.m.$(B,C)=B\widetilde{C}=C\widetilde{B}$.
Hence, $HK=AB^{-1}CD^{-1}=A\widetilde{C}\big(D\widetilde{B}\big)^{-1}$,
and therefore, by the definition of the singular degree,
$$
\begin{array}{l}
\displaystyle{
\vphantom{\Big(}
\sdeg(HK)\leq\deg(D\widetilde{B})
=\deg(D)+\deg(\widetilde{B})
} \\
\displaystyle{
\vphantom{\Big(}
\leq\deg(D)+\deg(B)
=\sdeg(H)+\sdeg(K)
\,.}
\end{array}
$$

Similarly, by Lemma \ref{20130703:lem2}(b),
there exist left corpime matrices 
$\widetilde{B}_1,\widetilde{D}\in\Mat_{\ell\times\ell}\mc K[\partial]$
such that
$\widetilde{B}_1$ is non-degenerate with $\deg(\widetilde{B}_1)\leq\deg(B)$,
and left l.c.m.$(B,D)=\widetilde{B}_1D=\widetilde{D}B$.
Hence, 
$$
H+K
=A\widetilde{D}(B\widetilde{D})^{-1}+C\widetilde{B}_1(D\widetilde{B}_1)^{-1}
=\big(A\widetilde{D}+C\widetilde{B}_1\big)(D\widetilde{B}_1)^{-1}
\,.
$$
Therefore,
$$
\begin{array}{l}
\displaystyle{
\vphantom{\Big(}
\sdeg(H+K)\leq\deg(D\widetilde{B}_1)
=\deg(D)+\deg(\widetilde{B}_1)
} \\
\displaystyle{
\vphantom{\Big(}
\leq\deg(D)+\deg(B)
=\sdeg(H)+\sdeg(K)
\,.}
\end{array}
$$
\end{proof}

\subsection{Basic Lemma}\label{sec:3.3b}

\begin{lemma}\label{20130705:lem3}
Let $A^\alpha,B^\alpha\in\Mat_{\ell\times\ell}\mc K[\partial]$, $\alpha=1,\dots,N$,
where $B^\alpha$ is non-degenerate for every $\alpha$.
Consider the rational matrix pseudodifferential operator
\begin{equation}\label{20130705:eq11}
H=A^1(B^1)^{-1}+\dots+A^N(B^N)^{-1}\,,
\end{equation}
and assume that
\begin{equation}\label{20130705:eq12}
\sdeg(H)=\deg(B^1)+\dots+\deg(B^N)\,.
\end{equation}
(In other words, \eqref{20130705:eq11} is a minimal rational expression for $H$,
cf. Definition \ref{20130611:defa} below.)
Let
\begin{equation}\label{20130705:eq13}
B=B^1C^1=\dots=B^NC^N
\,,
\end{equation}
be the right least common multiple of $B^1,\dots,B^N$.
Then:
\begin{enumerate}[(a)]
\item
Each summand $A^\alpha(B^\alpha)^{-1}$ is a minimal fractional decomposition.
\item
The non-degenerate matrices $B^1,\dots,B^N$ are strongly left coprime 
(see Definition \ref{20130705:def}).
\item
$H=(A^1C^1+\dots+A^NC^N)B^{-1}$
is a minimal fractional decomposition for $H$.
\end{enumerate}
\end{lemma}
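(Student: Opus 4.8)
The plan is to use the key fact (Proposition \ref{20130703:lem3}(a)-(b)) that the singular degree is subadditive under both sums and products, and to track where equality must hold. Since
$$
\sdeg(H)\leq\sum_\alpha\sdeg\big(A^\alpha(B^\alpha)^{-1}\big)\leq\sum_\alpha\deg(B^\alpha)\,,
$$
the hypothesis \eqref{20130705:eq12} forces equality everywhere. In particular each inequality $\sdeg(A^\alpha(B^\alpha)^{-1})\leq\deg(B^\alpha)$ is an equality, and by Definition \ref{20130610:def} this means $A^\alpha(B^\alpha)^{-1}$ is already a minimal fractional decomposition; this is part (a).

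For part (c) I would first produce \emph{some} fractional decomposition of $H$ with denominator $B=\text{right l.c.m.}(B^1,\dots,B^N)$. Using \eqref{20130705:eq13} we have $(B^\alpha)^{-1}=C^\alpha B^{-1}$, so $H=\big(\sum_\alpha A^\alpha C^\alpha\big)B^{-1}=:PB^{-1}$. By Lemma \ref{20130705:lem2}, $\deg(B)\leq\sum_\alpha\deg(B^\alpha)=\sdeg(H)$. On the other hand, $\sdeg(H)\leq\deg(B)$ since $PB^{-1}$ is \emph{a} (not necessarily minimal) fractional decomposition and $\sdeg$ is the minimal denominator degree. Hence $\deg(B)=\sdeg(H)$, which forces $PB^{-1}$ to be a minimal fractional decomposition, giving (c).

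Part (b) is then immediate: the equality $\deg(B)=\deg\big(\text{right l.c.m.}(B^1,\dots,B^N)\big)=\sum_\alpha\deg(B^\alpha)$ we just established is precisely the definition of strong left coprimeness (Definition \ref{20130705:def}).

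I expect the main subtlety to be the first reduction: verifying carefully that equality throughout the chain $\sdeg(H)\leq\sum_\alpha\sdeg(A^\alpha(B^\alpha)^{-1})\leq\sum_\alpha\deg(B^\alpha)$ genuinely follows from \eqref{20130705:eq12}, and in particular that the iterated application of Proposition \ref{20130703:lem3}(b) (which is stated for two terms) extends cleanly to $N$ terms by induction — one must keep the telescoping bookkeeping of denominators consistent with the right l.c.m. $B$ appearing in \eqref{20130705:eq13}. Once this is set up, parts (a), (b), (c) fall out quickly; the only remaining care is to confirm the identity $\sum_\alpha A^\alpha C^\alpha\cdot B^{-1}=H$ holds in $\Mat_{\ell\times\ell}\mc K(\partial)$ as stated, which is a direct computation from \eqref{20130705:eq13}.
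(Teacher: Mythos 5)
Your proposal is correct and follows essentially the same route as the paper: subadditivity of $\sdeg$ (Proposition \ref{20130703:lem3}(b)) forces equality term by term for (a), and the identity $H=(A^1C^1+\dots+A^NC^N)B^{-1}$ together with Lemma \ref{20130705:lem2} and minimality of $\sdeg$ forces $\deg(B)=\sum_\alpha\deg(B^\alpha)=\sdeg(H)$, giving (b) and (c). The points you flag as potential subtleties (the $N$-term extension of subadditivity and the identity $\sum_\alpha A^\alpha C^\alpha\,B^{-1}=H$) are indeed routine and are treated as such in the paper.
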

\begin{proof}
By equation \eqref{20130705:eq11} and Proposition \ref{20130703:lem3}(b),
we have
$$
\begin{array}{l}
\displaystyle{
\vphantom{\Big(}
\sdeg(H)\leq\sdeg(A^1(B^1)^{-1})+\dots+\sdeg(A^N(B^N)^{-1})
} \\
\displaystyle{
\vphantom{\Big(}
\leq\deg(B^1)+\dots+\deg(B^N)
\,.}
\end{array}
$$
Hence, by the assumption \eqref{20130705:eq12},
all inequalities above are in fact equalities.
In particular, 
$\sdeg(A^\alpha(B^\alpha)^{-1})=\deg(B^\alpha)$ for every $\alpha=1,\dots,N$,
proving (a).

By the obvious identity $H=(A^1C^1+\dots+A^NC^N)B^{-1}$
and Lemma \ref{20130705:lem2} we have
$$
\sdeg(H)\leq\deg(B)\leq\deg(B^1)+\dots+\deg(B^N)\,.
$$
Again, by the assumption \eqref{20130705:eq12},
all inequalities above are equalities.
In particular $\deg(B)=\deg(B^1)+\dots+\deg(B^N)$, proving (b),
and $\sdeg(H)=\deg(B)$, proving (c).
\end{proof}
\begin{remark}\label{20130715:rem2}
Clearly, conditions (a), (b) and (c) imply that \eqref{20130705:eq11} is a minimal
rational expression (i.e. \eqref{20130705:eq12} holds).
On the other hand, conditions (a) and (b) alone
are not sufficient for the minimalty of \eqref{20130705:eq11}.
To see this, consider the rational expression
\begin{equation}\label{20130715:eq1}
H=e^{-x}\partial^{-1}+1(\partial+1)^{-1}\,.
\end{equation}
Clearly, $e^{-x}\circ\partial^{-1}$ and $1\circ(\partial+1)^{-1}$ 
are minimal fractional decompositions,
and $\partial$ and $\partial+1$ are left coprime (hence strongly left coprime).
Hence, conditions (a) and (b) of Lemma \ref{20130705:lem3} hold.
On the other hand, we have
$\partial(\partial+1)=\Big(\partial+\frac1{1+e^{-x}}\Big)\Big(\partial+\frac{e^{-x}}{1+e^{-x}}\Big)$,
and $e^{-x}(\partial+1)+\partial=(1+e^{-x})\Big(\partial+\frac{e^{-x}}{1+e^{-x}}\Big)$,
so that
$$
H=(1+e^{-x})\Big(\partial+\frac1{1+e^{-x}}\Big)^{-1}\,.
$$
Hence, $\sdeg(H)=1<1+1$, and condition \eqref{20130705:eq12} fails.
\end{remark}

\section{The association relation}\label{sec:4}

\subsection{Definition of the association relation}\label{sec:4.1}

Recall the definition of $H$-association relation from \cite{DSK13}:
\begin{definition}\label{20130703:def1}
Given a rational matrix pseudodifferential operator 
$H\in\Mat_{\ell\times\ell}\mc K(\partial)$,
we say that the elements $\xi,P\in\mc K^\ell$
are $H$-\emph{associated},
and we denote this by $\xi\ass{H} P$,
if there exist
a fractional decomposition $H=AB^{-1}$,
with $A,B\in\Mat_{\ell\times\ell}\mc K[\partial]$
and $B$ non-degenerate,
and an element $F\in\mc K^\ell$,
such that $\xi=BF$ and $P=AF$.
\end{definition}
\begin{remark}
One can generalize the notion of association relation $\xi\ass{H} P$
for $\xi$ and $P$ with entries in a differential domain $\mc V$ (see e.g. \cite{DSK13}).
However the solution $F$ of the equations $\xi=BF$ and $P=AF$
is allowed to have entries in the field of fractions $\mc K$.
The same remark applies to Definition \ref{20130703:def2} below.
\end{remark}

We want to generalize the above association relation to an arbitrary 
\emph{rational expression} for $H$,
namely an expression of the form
\begin{equation}\label{20130611:eq1}
H=\sum_{\alpha\in\mc A}
A^{\alpha}_1(B^\alpha_1)^{-1}\dots
A^{\alpha}_n(B^\alpha_n)^{-1}
\,,
\end{equation}
with $A^\alpha_i,B^\alpha_i\in\Mat_{\ell\times\ell}\mc K[\partial]$
and $B^\alpha_i$ non-degenerate, for all $i\in\mc I,\alpha\in\mc A$.
(Here and further, we let $\mc I=\{1,\dots,n\}$ and $\mc A$ be a finite index set,
of cardinality $|\mc A|=N$.)

\begin{definition}\label{20130703:def2}
Given matrices
$A^\alpha_i,B^\alpha_i\in\Mat_{\ell\times\ell}\mc K[\partial]$, 
$i\in\mc I,\alpha\in\mc A$,
with $B^\alpha_i$ non-degenerate for all $i,\alpha$,
we say that the elements $\xi,P\in\mc K^\ell$
are $\{A^\alpha_i,B^\alpha_i\}_{i,\alpha}$-\emph{associated}
over the differential field extension $\mc K\subset\mc K_1$,
and we denote this by 
\begin{equation}\label{20130703:eq1}
\xi\assk{\{A^\alpha_i,B^\alpha_i\}_{i,\alpha}}{\mc K_1} P
\,,
\end{equation}
if there exist
$F^\alpha_i\in\mc K_1^\ell$, $i\in\mc I,\alpha\in\mc A$, such that
\begin{equation}\label{20130611:eq2}
\begin{array}{l}
\displaystyle{
\vphantom{\Big(}
\xi=B^\alpha_nF^\alpha_n
\,\,\,\,\text{ for all } \alpha \in\mc A
\,,} \\
\displaystyle{
\vphantom{\Big(}
A^\alpha_iF^\alpha_i
=B^\alpha_{i-1}F^\alpha_{i-1}
\,\,\,\,\text{ for all } 1\neq i\in\mc I\,,\alpha \in\mc A
\,,} \\
\displaystyle{
\vphantom{\Big(}
\sum_{\alpha\in\mc A} A^\alpha_1F^\alpha_1=P
\,.}
\end{array}
\end{equation}
In this case, we say that the collection $\{F^\alpha_i\}_{i,\alpha}$
is a \emph{solution} for the association relation \eqref{20130703:eq1}
over the field $\mc K_1$.
\end{definition}

In particular, Definition \ref{20130703:def1} can be rephrased by saying that
$\xi\ass{H}P$
if and only if $\xi\assk{\{A,B\}}{\mc K}P$
for some fractional decomposition $H=AB^{-1}$.
In the remainder of the section we want to establish
a deeper connection between Definition \ref{20130703:def1} and Definition \ref{20130703:def2}.
In fact, in Section \ref{sec:4.2} we prove that,
if \eqref{20130611:eq1} is a \emph{minimal} rational expression for $H$, 
then the association relation \eqref{20130703:eq1}
holds over any differential field extension of $\mc K$
if and only if $\xi\ass{H}P$.

\subsection{An upper and lower bound for the singular degree
}\label{sec:4.2a}

Let $H\in\Mat_{\ell\times\ell}\mc K(\partial)$ be a rational matrix pseudodifferential operator,
and let \eqref{20130611:eq1} be a rational expression for $H$,
with $A^\alpha_i,B^\alpha_i\in\mc K[\partial]$,
and $B^\alpha_i$ non-degenerate, for all $\alpha\in\mc A,\,i\in\mc I$.
We associate to this rational expression the following vector space,
of solutions for the zero association relation:
\begin{equation}\label{20130801:eq2}
\begin{array}{l}
\displaystyle{
\vphantom{\Big)}
\mc E:=\mc E(\{A^\alpha_i,B^\alpha_i\}_{i\in\mc I,\alpha\in\mc A})
} \\
\displaystyle{
\vphantom{\Big)}
=\Big\{(F^\alpha_i)_{i\in\mc I,\alpha\in\mc A}\in\mc L^{\ell Nn}
\text{ solution for } 
0\assk{\{A^\alpha_i,B^\alpha_i\}_{i,\alpha}}{\mc L} 0
\Big\}
\,.}
\end{array}
\end{equation}
Note that a rational expression for $H^*$ is
$$
H^*=\sum_{\alpha\in\mc A}
1({B^\alpha_n}^*)^{-1}{A^{\alpha}_n}^*
\dots
({B^\alpha_1}^*)^{-1}{A^{\alpha}_1}^*1^{-1}
\,.
$$
The corresponding vector space of solutions for the zero association relation is
\begin{equation}\label{20130801:eq3}
\begin{array}{l}
\displaystyle{
\vphantom{\Big)}
\mc E^*:=\mc E(\{{A^\alpha_{n+1-i}}^*,{B^\alpha_{n-i}}^*\}_{i\in\{0,\dots,n\},\alpha\in\mc A})
} \\
\displaystyle{
\vphantom{\Big)}
=\Big\{(F^\alpha_i)_{i\in\mc I,\alpha\in\mc A}\in\mc L^{\ell Nn}
\text{ solution for } 
0\assk{\{{A^\alpha_{n+1-i}}^*,{B^\alpha_{n-i}}^*\}_{i,\alpha}}{\mc L} 0
\Big\}
\,,}
\end{array}
\end{equation}
where we let $A^\alpha_{n+1}=B^\alpha_0=\id$.
\begin{theorem}\label{20130801:thm}
For the rational matrix pseudodifferential operator $H$,
given by the rational expression \eqref{20130611:eq1},
we have
\begin{equation}\label{20130801:eq4}
\begin{array}{l}
\displaystyle{
\vphantom{\Big)}
\sum_{i\in\mc I,\alpha\in\mc A}\deg(B^\alpha_i)
-\dim_{\bar{\mc C}}\mc E-\dim_{\bar{\mc C}}\mc E^*
\leq\sdeg(H)
} \\
\displaystyle{
\vphantom{\Big)}
\leq
\sum_{i\in\mc I,\alpha\in\mc A}\deg(B^\alpha_i)-
\max\big\{
\dim_{\bar{\mc C}}\mc E,\,
\dim_{\bar{\mc C}}\mc E^*
\big\}
\,.}
\end{array}
\end{equation}
\end{theorem}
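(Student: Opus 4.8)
The idea is to reduce the general rational expression \eqref{20130611:eq1} to a single fraction by repeatedly applying Lemma \ref{20130801:lem}, keeping careful track of degrees, and then to compute the singular degree of the resulting fraction using Proposition \ref{20130610:lem3}(a), expressing the defect $\dim_{\bar{\mc C}}(\ker_{\mc L}A\cap\ker_{\mc L}B)$ in terms of the spaces $\mc E$ and $\mc E^*$. For each $\alpha\in\mc A$, Lemma \ref{20130801:lem} produces $X^\alpha_1,\dots,X^\alpha_n\in\Mat_{\ell\times\ell}\mc K[\partial]$ with $X^\alpha_n$ non-degenerate such that $A^\alpha_1(B^\alpha_1)^{-1}\cdots A^\alpha_n(B^\alpha_n)^{-1}=(A^\alpha_1X^\alpha_1)(B^\alpha_nX^\alpha_n)^{-1}$, where moreover $B^\alpha_iX^\alpha_i=A^\alpha_{i+1}X^\alpha_{i+1}$. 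The construction in the proof of that lemma shows that $\deg(B^\alpha_nX^\alpha_n)\le\sum_{i\in\mc I}\deg(B^\alpha_i)$; in fact one should track the kernel: by Corollary \ref{DSK11:prop}, the drop $\sum_i\deg(B^\alpha_i)-\deg(B^\alpha_nX^\alpha_n)$ is the dimension of the space of "internal" relations, which will turn out to be exactly the $\alpha$-component of $\mc E$.

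Then I would sum the fractions over $\alpha$: putting the $B^\alpha_nX^\alpha_n$ over a common right l.c.m. $B=(B^\alpha_nX^\alpha_n)C^\alpha$ gives a fraction $H=AB^{-1}$ with $A=\sum_\alpha(A^\alpha_1X^\alpha_1)C^\alpha$. By Lemma \ref{20130705:lem2}, $\deg(B)\le\sum_\alpha\deg(B^\alpha_nX^\alpha_n)\le\sum_{i,\alpha}\deg(B^\alpha_i)$, and again the two drops — the l.c.m. drop and the individual lemma drops — together account for a subspace of $\mc E$ of the stated dimension; conversely, unwinding \eqref{20130611:eq2} with $\xi=P=0$ shows every element of $\mc E$ arises this way, so the total drop from $\sum_{i,\alpha}\deg(B^\alpha_i)$ to $\deg(B)$ equals $\dim_{\bar{\mc C}}\mc E$. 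This gives
\begin{equation*}
\deg(B)=\sum_{i\in\mc I,\alpha\in\mc A}\deg(B^\alpha_i)-\dim_{\bar{\mc C}}\mc E
\,.
\end{equation*}
Applying Proposition \ref{20130610:lem3}(a) to this fraction yields $\sdeg(H)=\deg(B)-\dim_{\bar{\mc C}}(\ker_{\mc L}A\cap\ker_{\mc L}B)$, so the upper bound $\sdeg(H)\le\sum_{i,\alpha}\deg(B^\alpha_i)-\dim_{\bar{\mc C}}\mc E$ is immediate. Running the same construction on the rational expression for $H^*$ displayed before \eqref{20130801:eq3} and using Proposition \ref{20130702:lem1} gives the companion bound $\sdeg(H)=\sdeg(H^*)\le\sum_{i,\alpha}\deg(B^\alpha_i)-\dim_{\bar{\mc C}}\mc E^*$; taking the minimum of the two produces the $\max$ in the upper bound of \eqref{20130801:eq4}.

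For the lower bound, I would apply Proposition \ref{20130610:lem3}(a) in the other direction, bounding $\dim_{\bar{\mc C}}(\ker_{\mc L}A\cap\ker_{\mc L}B)$ from above. An element of this intersection is an $F\in\mc L^\ell$ with $BF=0$ and $AF=0$; I would lift it through the l.c.m.\ and the matrices $X^\alpha_i$ to build a solution of the homogeneous association relation \eqref{20130611:eq2} for the adjoint system, i.e.\ an element of $\mc E^*$, plus the data already in $\mc E$. Concretely, $\ker_{\mc L}B\cong$ (solutions with $\xi=0$ of the one-fraction form), and the extra condition $AF=0$ is dual to an $\mc E^*$-type relation; a dimension count then gives $\dim_{\bar{\mc C}}(\ker_{\mc L}A\cap\ker_{\mc L}B)\le\dim_{\bar{\mc C}}\mc E^*$ after we have already absorbed $\mc E$ into $\deg(B)$. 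Combining with $\deg(B)=\sum_{i,\alpha}\deg(B^\alpha_i)-\dim_{\bar{\mc C}}\mc E$ gives $\sdeg(H)\ge\sum_{i,\alpha}\deg(B^\alpha_i)-\dim_{\bar{\mc C}}\mc E-\dim_{\bar{\mc C}}\mc E^*$, as claimed.

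The main obstacle I anticipate is the bookkeeping identifying the various kernel/cokernel dimensions precisely with $\dim_{\bar{\mc C}}\mc E$ and $\dim_{\bar{\mc C}}\mc E^*$: one must show that the successive drops in degree — at each right l.c.m.\ step inside Lemma \ref{20130801:lem}, and at the final sum-over-$\alpha$ step — assemble into an exact sequence whose total is $\mc E$, with no overcounting or undercounting, and separately that the residual intersection $\ker_{\mc L}A\cap\ker_{\mc L}B$ injects into $\mc E^*$. Using the linear closure $\mc L$ (so that Corollary \ref{DSK11:prop} and Proposition \ref{CDSK13b:prop1} apply with exact dimensions) and Theorem \ref{20130705:thm} on strong coprimeness to lift solutions through l.c.m.'s should make each individual step a clean dimension count; the delicate point is organizing them coherently and checking that the two estimates (from $H$ and from $H^*$) are genuinely the ones appearing in \eqref{20130801:eq4}.
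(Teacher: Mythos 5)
There is a genuine gap: the two identifications on which your bounds rest are both false, and the paper's own example in Remark \ref{20130715:rem2} refutes them. Take $n=1$, $N=2$, $A^1=e^{-x}$, $B^1=\partial$, $A^2=1$, $B^2=\partial+1$. Since $\partial$ and $\partial+1$ are left coprime, the collapsed denominator is $B=\text{right l.c.m.}(\partial,\partial+1)=\partial^2+\partial$, of degree $2=\deg(B^1)+\deg(B^2)$: there is no degree drop at all, yet $\mc E=\{(c,-c\,e^{-x}):c\in\bar{\mc C}\}$ has dimension $1$, so your central identity $\deg(B)=\sum_{i,\alpha}\deg(B^\alpha_i)-\dim_{\bar{\mc C}}\mc E$ fails, as does the converse claim that every element of $\mc E$ "arises" from a drop (note also that $\mc E$ does not split into $\alpha$-components, because of the coupling condition $\sum_\alpha A^\alpha_1F^\alpha_1=0$). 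Likewise $A=e^{-x}(\partial+1)+\partial$ gives $\ker_{\mc L}A\cap\ker_{\mc L}B=\bar{\mc C}(1+e^{-x})$ of dimension $1$, while $\mc E^*=0$, so your lower-bound step $\dim_{\bar{\mc C}}(\ker_{\mc L}A\cap\ker_{\mc L}B)\le\dim_{\bar{\mc C}}\mc E^*$ is false as well. (The theorem's bounds do hold here: $\sdeg(H)=1$.)

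The roles you assign to $\mc E$ and $\mc E^*$ are in fact reversed, and neither can be pinned down by an equality the way you propose. Elements of $\mc E$ get split, in a way that depends on the expression, between degree drops and the residual kernel $\ker_{\mc L}A\cap\ker_{\mc L}B$ of the collapsed fraction; what the drops (the kernels of the left g.c.d.'s $Q$) measure exactly is $\mc E^*$. The paper's proof makes this precise by induction on $(n,N)$: at each step it extracts one left g.c.d. $Q$ (of $B^{N-1},B^N$ when $n=1$, of $B^\alpha_{n-1},A^\alpha_n$ when $n\ge2$), and proves two comparison statements -- an exact sequence $0\to\mc E_1\to\mc E\to\ker_{\mc L}Q$, giving only $\dim_{\bar{\mc C}}\mc E_1\le\dim_{\bar{\mc C}}\mc E\le\dim_{\bar{\mc C}}\mc E_1+\deg(Q)$ since the last map need not be surjective, and a \emph{short} exact sequence $0\to\ker_{\mc L}Q^*\to\mc E^*\to\mc E_1^*\to0$, giving $\dim_{\bar{\mc C}}\mc E^*=\dim_{\bar{\mc C}}\mc E_1^*+\deg(Q)$ exactly (Lemmas \ref{20130801:lem1}, \ref{20130801:lem2}, \ref{20130801:lem1b}, \ref{20130801:lem2b}). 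This asymmetry is precisely why the theorem is a two-sided sandwich rather than a formula, and any correct argument must produce such one-step comparisons (or an equivalent); your plan, which asserts exact equalities with $\mc E$ and $\mc E^*$ interchanged, cannot be repaired by more careful bookkeeping alone.
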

\begin{proof}
We prove the inequalities \eqref{20130801:eq4}
for the rational expression \eqref{20130611:eq1}
by induction on the pair $(n,N)$, in lexicographic order.
For $n=N=1$ the rational expression \eqref{20130611:eq1}
reduces to $H=AB^{-1}$,
and in this case the spaces \eqref{20130801:eq2} and \eqref{20130801:eq3}
are
$$
\mc E=\big\{F\in\mc L^\ell\,\text{ solution of }\,0\assk{(A,B)}{\mc L}0\big\}
=\ker{}_{\mc L}A\cap\ker{}_{\mc L}B
\,,
$$
and
$$
\mc E^*=\big\{F\in\mc L^\ell\,\text{ solution of }\,0\assk{\{(1,B^*),(A^*,1)\}}{\mc L}0\big\}
=0
\,.
$$
Therefore, the upper and the lower bounds in \eqref{20130801:eq4}
coincide with $\deg(B)-\dim_{\bar{\mc C}}(\ker_{\mc L}A\cap\ker_{\mc L}B)$,
which is equal to $\sdeg(H)$ by Proposition \ref{20130610:lem3}(a).

Next, let us consider the case when $n=1$ and $N\geq2$.
In this case the rational expression \eqref{20130611:eq1}
becomes
\begin{equation}\label{20130801:eq5}
H=A^1(B^1)^{-1}+\dots+A^N(B^N)^{-1}
\,.
\end{equation}
In this case the spaces $\mc E$ and $\mc E^*$ 
defined in equations \eqref{20130801:eq2} and \eqref{20130801:eq3}
are, respectively,
\begin{equation}\label{20130801:eq13}
\mc E
=\Bigg\{(F^\alpha)_{\alpha=1}^{N}
\,\Bigg|\,
\begin{array}{l}
B^1F^1=\dots=B^{N}F^N=0\,,\\
A^1F^1+\dots+A^NF^N=0\,.
\end{array}
\Bigg\}
\,,
\end{equation}
and
\begin{equation}\label{20130801:eq14}
\mc E^*
=\Bigg\{(F^\alpha)_{\alpha=1}^N
\,\Bigg|\,
\begin{array}{l}
{B^1}^*F^1=\dots={B^N}^*F^N=0\,,\\
F^1+\dots+F^N=0\,.
\end{array}
\Bigg\}
\,.
\end{equation}
Let $Q\in\Mat_{\ell\times\ell}\mc K[\partial]$
be the left greatest common divisor of $B_{N-1}$ and $B_N$,
so that
\begin{equation}\label{20130801:eq6}
B^{N-1}=Q\bar{B}^{N-1}
\,\,,\,\,\,\,
B^N=Q\bar{B}^N
\,,
\end{equation}
and $\bar{B}^{N-1}$ and $\bar{B}^N$ are left coprime.
Let also
\begin{equation}\label{20130801:eq7}
\bar{B}^{N-1}C^{N-1}
=
\bar{B}^NC^N
\end{equation}
be the right least common multiple of $\bar{B}^{N-1}$ and $\bar{B}^N$.
In particular, by Lemma \ref{20130703:lem2}(a),
$C^{N-1}$ and $C^N$ are non-degenerate, right coprime,
and
\begin{equation}\label{20130801:eq8}
\deg(C^{N-1})
=
\deg(\bar{B}^N)
=
\deg(B^N)-\deg(Q)
\,.
\end{equation}
Moreover,
\begin{equation}\label{20130801:eq7b}
\widetilde{B}^{N-1}=
B^{N-1}C^{N-1}
=
B^NC^N
\end{equation}
is the right least common multiple of $B^{N-1}$ and $B^N$.
By equation \eqref{20130801:eq8} we have
\begin{equation}\label{20130801:eq12}
\deg(\widetilde{B}^{N-1})=\deg(B^{N-1})+\deg(B^N)-\deg(Q)
\,.
\end{equation}
Let also
$\widetilde{A}^{N-1}=A^{N-1}C^{N-1}+A^NC^N$.
Then, $H$ admits the following rational expression:
\begin{equation}\label{20130801:eq8b}
H=A^1(B^1)^{-1}+\dots+A^{N-2}(B^{N-2})^{-1}
+\widetilde{A}^{N-1}(\widetilde{B}^{N-1})^{-1}
\,.
\end{equation}
This rational expression has $N-1$ summands,
therefore we can apply the inductive assumption.
We have:
\begin{equation}\label{20130801:eq9}
\begin{array}{l}
\displaystyle{
\vphantom{\Big)}
\sum_{\alpha=1}^{N-2}\deg(B^\alpha)
+\deg(\widetilde{B}^{N-1})
-\dim_{\bar{\mc C}}\mc E_1-\dim_{\bar{\mc C}}\mc E_1^*
\leq\sdeg(H)
} \\
\displaystyle{
\vphantom{\Big)}
\leq
\sum_{\alpha=1}^{N-2}\deg(B^\alpha)+\deg(\widetilde{B}^{N-1})
-\max\big\{
\dim_{\bar{\mc C}}\mc E_1,\,
\dim_{\bar{\mc C}}\mc E_1^*
\big\}
\,,}
\end{array}
\end{equation}
where, recalling \eqref{20130801:eq2} and \eqref{20130801:eq3}, we let
\begin{equation}\label{20130801:eq10}
\mc E_1
=\Bigg\{(G^\alpha)_{\alpha=1}^{N-1}
\,\Bigg|\,
\begin{array}{l}
B^1G^1=\dots=B^{N-2}G^{N-2}=\widetilde{B}^{N-1}G^{N-1}=0\,,\\
A^1G^1+\dots+A^{N-2}G^{N-2}+\widetilde{A}^{N-1}G^{N-1}=0\,.
\end{array}
\Bigg\}
\,,
\end{equation}
and
\begin{equation}\label{20130801:eq11}
\mc E_1^*
=\Bigg\{(G^\alpha)_{\alpha=1}^{N-1}
\,\Bigg|\,
\begin{array}{l}
{B^1}^*G^1=\dots={B^{N-2}}^*G^{N-2}={\widetilde{B}^{N-1}}{}^*G^{N-1}=0\,,\\
G^1+\dots+G^{N-2}+G^{N-1}=0\,.
\end{array}
\Bigg\}
\,.
\end{equation}

In order to continue the proof, we need the following two lemmas.
\begin{lemma}\label{20130801:lem1}
We have an exact sequence
\begin{equation}\label{20130801:eq15}
0\to\mc E_1\stackrel{f}{\longrightarrow}\mc E\stackrel{g}{\longrightarrow}\ker{}_{\mc L}Q\,,
\end{equation}
where $f$ is the map
\begin{equation}\label{20130801:eq16}
f:\,(G^\alpha)_{\alpha=1}^{N-1}\mapsto(G^1,\dots,G^{N-2},C^{N-1}G^{N-1},C^NG^{N-1})
\,,
\end{equation}
and $g$ is the map
\begin{equation}\label{20130801:eq17}
g:\,(F^\alpha)_{\alpha=1}^N\mapsto\bar{B}^{N-1}F^{N-1}-\bar{B}^NF^N
\,.
\end{equation}
\end{lemma}
\begin{proof}
First, it is clear that the image of $g$ lies in the kernel of $Q$,
since, for $(F^\alpha)_{\alpha=1}^N\in\mc E$, we have
$$
Qg(F^\alpha)_{\alpha=1}^N=Q(\bar{B}^{N-1}F^{N-1}-\bar{B}^NF^N)=
{B}^{N-1}F^{N-1}-{B}^NF^N=0-0=0\,.
$$
Moreover, since $C^{N-1}$ and $C^N$ are right coprime,
we have, by Theorem \ref{th:6.4}(a), that $\ker_{\mc L}(C^{N-1})\cap\ker_{\mc L}(C^N)=0$.
This clearly implies that the map $f$ is injective.
We are left to prove that $\im(f)=\ker(g)$.
We have
$$
g(f(G^\alpha)_{\alpha=1}^{N-1})
=
\bar{B}^{N-1}C^{N-1}G^{N-1}-\bar{B}^NC^NG^{N-1}
=0\,,
$$
by \eqref{20130801:eq7}.
Hence, $\im(f)\subset\ker(g)$.
To prove the opposite inclusion,
let $(F^\alpha)_{\alpha=1}^N\in\ker(g)$,
i.e. 
\begin{equation}\label{20130801:eq18}
\begin{array}{l}
B^1F^1=\dots=B^{N-2}F^{N-2}=0
\,,\,\,
\bar{B}^{N-1}F^{N-1}=\bar{B}^NF^N\in\ker Q
\,,\\
A^1F^1+\dots+A^NF^N=0
\,.
\end{array}
\end{equation}
Since $\bar{B}^{N-1}$ and $\bar{B}^N$ are left coprime,
by Theorem \ref{CR13}
there exists $G^{N-1}\in\mc L^\ell$ such that
\begin{equation}\label{20130801:eq19}
F^{N-1}=C^{N-1}G^{N-1}
\,\text{ and }\,
F^N=C^NG^{N-1}
\,.
\end{equation}
Therefore, by \eqref{20130801:eq18} we have $(F^1,\dots,F^{N-2},G^N)\in\mc E_1$,
and by \eqref{20130801:eq19} we have 
$(F^\alpha)_{\alpha=1}^N=f(F^1,\dots,F^{N-2},G^N)$.
Therefore, $\ker(g)\subset\im(f)$.
\end{proof}
\begin{lemma}\label{20130801:lem2}
We have a short exact sequence
\begin{equation}\label{20130801:eq15b}
0\to\ker(Q^*)\stackrel{g^*}{\longrightarrow}
\mc E^*\stackrel{f^*}{\longrightarrow}
\mc E_1^*\to0
\,,
\end{equation}
where $f^*$ is the map
\begin{equation}\label{20130801:eq16b}
f^*:\,(F^\alpha)_{\alpha=1}^{N}\mapsto(F^1,\dots,F^{N-2},F^{N-1}+F^N)
\,,
\end{equation}
and $g^*$ is the map
\begin{equation}\label{20130801:eq17b}
g^*:\,G\mapsto(0,\dots,0,G,-G)
\,.
\end{equation}
\end{lemma}
\begin{proof}
The map $g^*$ is obviously injective,
and its image lies in $\mc E^*$, since ${B^{N-1}}^*$ and ${B^N}^*$
are divisible on the right by $Q^*$.
Moreover, since $f^*\circ g^*=0$, we have the inclusion $\im(g^*)\subset\ker(f^*)$.
The opposite inclusion is clear too:
if $(F^\alpha)_{\alpha=1}^N\in\ker f^*$,
then $F^1=\dots=F^{N-2}=0$,
and 
$$
F^N=-F^{N-1}\in\ker{}_{\mc L}({B^N}^*)\cap\ker{}_{\mc L}({B^{N-1}}^*)=\ker{}_{\mc L}(Q^*)
\,,
$$
so that $(F^\alpha)_{\alpha=1}^N=g^*(F^{N-1})$.
We are left to prove that $f^*$ is surjective.
Let $(G^\alpha)_{\alpha=1}^{N-1}\in\mc E_1^*$.
We have
\begin{equation}\label{20130801:eq20}
0={\widetilde{B}^{N-1}}{}^*G^{N-1}
={C^{N-1}}^*{B^{N-1}}^*G^{N-1}
={C^{N}}^*{B^{N}}^*G^{N-1}
\,.
\end{equation}
Recall that ${C^{N-1}}^*$ and ${C^N}^*$ are left coprime,
and their right least common multiple is
${C^{N-1}}^*{\bar{B}^{N-1}}{}^*={C^{N}}^*{\bar{B}^{N}}{}^*$.
By equation \eqref{20130801:eq20} we have, in particular, that
${C^{N-1}}^*({B^{N-1}}^*G^{N-1})={C^N}^*(0)$.
Therefore, by Theorem \ref{CR13}, there exists
$Z\in\mc L^\ell$ such that
\begin{equation}\label{20130801:eq21}
{B^{N-1}}^*G^{N-1}={\bar{B}^{N-1}}{}^*Z
\,\text{ and }\,
{\bar{B}^{N}}{}^*Z=0
\,.
\end{equation}
Note that $Q^*$ is a non-degenerate matrix,
therefore, since $\mc L$ is linearly closed, there exists $X\in\mc L^\ell$
such that $Z=Q^*X$.
It thus follows by \eqref{20130801:eq21} that
\begin{equation}\label{20130801:eq22}
{B^{N-1}}^*G^{N-1}={B^{N-1}}^*X
\,\text{ and }\,
{B^{N}}^*X=0
\,.
\end{equation}
In other words, $X\in\ker({B^{N}}^*)$ and $G^{N-1}-X\in\ker({B^{N-1}}^*)$.
But then $(G^\alpha)_{\alpha=1}^{N-1}=f^*(G^1,\dots,G^{N-2},G^{N-1}-X,X)$.
\end{proof}
By Lemma \ref{20130801:lem1} we have
\begin{equation}\label{20130801:eq23}
\dim_{\bar{\mc C}}\mc E_1\leq\dim_{\bar{\mc C}}\mc E\leq\dim_{\bar{\mc C}}\mc E_1+\deg(Q)\,,
\end{equation}
while
by Lemma \ref{20130801:lem2} we have
\begin{equation}\label{20130801:eq24}
\dim_{\bar{\mc C}}\mc E^*=\dim_{\bar{\mc C}}\mc E_1^*+\deg(Q)\,.
\end{equation}
Combining equation \eqref{20130801:eq9}
with equations \eqref{20130801:eq12}, \eqref{20130801:eq23} and \eqref{20130801:eq24},
we get \eqref{20130801:eq4}, in this case.

Next, we prove the claim in the general case, when $n\geq2$.
Recall the definition \eqref{20130801:eq2} and \eqref{20130801:eq3}
of the spaces $\mc E$ and $\mc E^*$,
which can be rewritten as follows
\begin{equation}\label{20130801:eq13b}
\mc E
=\Bigg\{(F^\alpha_i)_{i\in\mc I,\alpha\in\mc A}
\,\Bigg|\,
\begin{array}{l}
B^\alpha_nF^\alpha_n=0\,,\,\,\alpha\in\mc A\,,\\
A^\alpha_iF^\alpha_i=B^\alpha_{i-1}F^\alpha_{i-1}
\,,\,\, 2\leq i\leq n,\alpha\in\mc A\,,\\
\sum_{\alpha\in\mc A}A^\alpha_1F^\alpha_1=0\,.
\end{array}
\Bigg\}
\,,
\end{equation}
and
\begin{equation}\label{20130801:eq14b}
\mc E^*
=\Bigg\{(F^\alpha_i)_{i\in\mc A,\alpha\in\mc A}
\,\Bigg|\,
\begin{array}{l}
{B^\alpha_1}^*F^\alpha_1=0\,,\,\,\alpha\in\mc A\,,\\
{A^\alpha_i}^*F^\alpha_{i-1}={B^\alpha_i}^*F^\alpha_i
\,,\,\, 2\leq i\leq n,\alpha\in\mc A\,,\\
\sum_{\alpha\in\mc A}F^\alpha_n=0\,.
\end{array}
\Bigg\}
\,.
\end{equation}
For every $\alpha\in\mc A$, let $Q^\alpha\in\Mat_{\ell\times\ell}\mc K[\partial]$
be the left greatest common divisor of $B^\alpha_{n-1}$ and $A^\alpha_n$,
so that
\begin{equation}\label{20130801:eq6b}
B^\alpha_{n-1}=Q^\alpha\bar{B}^\alpha_{n-1}
\,\,,\,\,\,\,
A^\alpha_n=Q^\alpha\bar{A}^\alpha_n
\,,
\end{equation}
and $\bar{B}^\alpha_{n-1}$ and $\bar{A}^\alpha_n$ are left coprime.
Let also
\begin{equation}\label{20130801:eq7c}
\bar{B}^\alpha_{n-1}C^\alpha
=
\bar{A}^\alpha_nD^\alpha
\end{equation}
be the right least common multiple of $\bar{B}^\alpha_{n-1}$ and $\bar{A}^\alpha_n$.
In particular, by Lemma \ref{20130703:lem2}(a),
$C^\alpha$ and $D^\alpha$ are right coprime,
and $D^\alpha$ is non-degenerate of degree
\begin{equation}\label{20130802:eq1}
\deg(D^\alpha)=\deg(\bar{B}^\alpha_{n-1})=\deg(B^\alpha_{n-1})-\deg(Q^\alpha)\,.
\end{equation}
In view of equations \eqref{20130801:eq6b} and \eqref{20130801:eq7c},
we can rewrite the rational expression \eqref{20130611:eq1} for $H$
as follows:
\begin{equation}\label{20130801:eq8c}
H=\sum_{\alpha\in\mc A}
A^{\alpha}_1(B^\alpha_1)^{-1}\dots
A^{\alpha}_{n-2}(B^\alpha_{n-2})^{-1}(A^\alpha_{n-1}C^\alpha)(B^\alpha_nD^\alpha)^{-1}
\,.
\end{equation}
This expression has $n-1$ factors in each summand,
therefore we can apply the inductive assumption.
We have, by the inductive assumption and equation \eqref{20130802:eq1}:
\begin{equation}\label{20130801:eq9b}
\begin{array}{l}
\displaystyle{
\vphantom{\Big)}
\sum_{i\in\mc I,\alpha\in\mc A}\deg(B^\alpha_i)-\sum_{\alpha\in\mc A}\deg(Q^\alpha)
-\dim_{\bar{\mc C}}\mc E_2-\dim_{\bar{\mc C}}\mc E_2^*
\leq\sdeg(H)
} \\
\displaystyle{
\vphantom{\Big)}
\leq
\sum_{i\in\mc I,\alpha\in\mc A}\deg(B^\alpha)-\sum_{\alpha\in\mc A}\deg(Q^\alpha)
-\max\big\{
\dim_{\bar{\mc C}}\mc E_2,\,
\dim_{\bar{\mc C}}\mc E_2^*
\big\}
\,,}
\end{array}
\end{equation}
where
\begin{equation}\label{20130801:eq10b}
\mc E_2
=\left\{
(G^\alpha_i)_{1\leq i\leq n-1,\alpha\in\mc A}
\,\left|\,
\begin{array}{l}
B^\alpha_nD^\alpha G^\alpha_{n-1}=0\,,\,\,\alpha\in\mc A\,,\\
A^\alpha_{n-1}C^\alpha G^\alpha_{n-1}=B^\alpha_{n-2}G^\alpha_{n-2}\,,\,\,\alpha\in\mc A\,,\\
A^\alpha_i G^\alpha_i=B^\alpha_{i-1}G^\alpha_{i-1}
\,,\,\, 2\leq i\leq n-2,\alpha\in\mc A\,,\\
\sum_{\alpha\in\mc A}A^\alpha_1G^\alpha_1=0\,.
\end{array}
\right.\right\}
\,,
\end{equation}
and
\begin{equation}\label{20130801:eq11b}
\mc E_2^*
=\left\{
(G^\alpha_i)_{1\leq i\leq n-1,\alpha\in\mc A}
\,\left|\,
\begin{array}{l}
{B^\alpha_1}^*G^\alpha_1=0\,,\,\,\alpha\in\mc A\,,\\
{A^\alpha_i}^*G^\alpha_{i-1}={B^\alpha_i}^*G^\alpha_i
\,,\,\, 2\leq i\leq n-2,\alpha\in\mc A\,,\\
{C^\alpha}^*{A^\alpha_{n-1}}^*G^\alpha_{n-2}={D^\alpha}^*{B^\alpha_n}^*G^\alpha_{n-1}
\,,\,\,\alpha\in\mc A\,,\\
\sum_{\alpha\in\mc A}G^\alpha_{n-1}=0\,.
\end{array}
\right.\right\}
\,.
\end{equation}

In order to complete the proof, we need the following two lemmas.
\begin{lemma}\label{20130801:lem1b}
We have an exact sequence
\begin{equation}\label{20130801:eq15c}
0\to\mc E_2\stackrel{f}{\longrightarrow}\mc E\stackrel{g}{\longrightarrow}
\bigoplus_{\alpha\in\mc A}\ker{}_{\mc L}Q^\alpha\,,
\end{equation}
where $f$ is the map
\begin{equation}\label{20130801:eq16c}
f:\,(G^\alpha_i)_{1\leq i\leq n-1,\alpha\in\mc A}\mapsto
(G^\alpha_1,\dots,G^\alpha_{n-2},C^\alpha G^\alpha_{n-1},D^\alpha G^\alpha_{n-1})_{\alpha\in\mc A}
\,,
\end{equation}
and $g$ is the map
\begin{equation}\label{20130801:eq17c}
g:\,(F^\alpha_i)_{i\in\mc I,\alpha\in\mc A}\mapsto
(\bar{B}^\alpha_{n-1}F^\alpha_{n-1}-\bar{A}^\alpha_nF^\alpha_n)_{\alpha\in\mc A}
\,.
\end{equation}
\end{lemma}
\begin{proof}
First, it is clear by \eqref{20130801:eq6b} and the definition of $\mc E$
that the image of $g$ lies in $\bigoplus_{\alpha\in\mc A}\ker_{\mc L}Q^\alpha$.
Moreover, since $C^\alpha$ and $D^\alpha$ are right coprime,
$f$ is clearly injective.
The inclusion $\im(f)\subset\ker(g)$
immediately follows by the definitions \eqref{20130801:eq13b} of $\mc E$ 
and \eqref{20130801:eq10b} of $\mc E_2$,
and by equations \eqref{20130801:eq6b} and \eqref{20130801:eq7c}.
We are left to prove that $\ker(g)\subset\im(f)$.
Let $(F^\alpha_i)_{i\in\mc I,\alpha\in\mc A}\in\ker(g)$,
i.e.,
\begin{equation}\label{20130801:eq18c}
\begin{array}{l}
\displaystyle{
\vphantom{\Big(}
B^\alpha_nF^\alpha_n=0
\,,\,\,\alpha\in\mc A
\,,}\\
\displaystyle{
\vphantom{\Big(}
\bar{A}^\alpha_nF^\alpha_n=\bar{B}^\alpha_{n-1}F^\alpha_{n-1}
\,,\,\,\alpha\in\mc A
\,,}\\
\displaystyle{
\vphantom{\Big(}
A^\alpha_iF^\alpha_i=B^\alpha_{i-1}F^\alpha_{i-1}
\,,\,\,2\leq i\leq n-1,\alpha\in\mc A
\,,}\\
\displaystyle{
\vphantom{\Big(}
\sum_{\alpha\in\mc A}A^\alpha_1F^\alpha_1=0
\,.}
\end{array}
\end{equation}
Since $\bar{B}^\alpha_{n-1}$ and $\bar{A}^\alpha_n$ are left coprime,
by Theorem \ref{CR13}
there exists $G^\alpha\in\mc L^\ell$ such that
\begin{equation}\label{20130801:eq19c}
F^\alpha_{n-1}=C^\alpha G^\alpha
\,\text{ and }\,
F^\alpha_n=D^\alpha G^\alpha
\,.
\end{equation}
Therefore, by \eqref{20130801:eq18c} we have 
$(F^\alpha_1,\dots,F^\alpha_{n-2},G^\alpha)_{\alpha\in\mc A}\in\mc E_2$,
and by \eqref{20130801:eq19c} we have 
$$
(F^\alpha_i)_{i\in\mc I,\alpha\in\mc A}=f\big((F^\alpha_1,\dots,F^\alpha_{n-2},G^\alpha)_{\alpha\in\mc A}\big)
\,.
$$
\end{proof}
\begin{lemma}\label{20130801:lem2b}
We have a short exact sequence
\begin{equation}\label{20130801:eq15d}
0\to
\bigoplus_{\alpha\in\mc A}\ker{}_{\mc L}{Q^\alpha}^*\stackrel{g^*}{\longrightarrow}
\mc E^*\stackrel{f^*}{\longrightarrow}
\mc E_2^*\to0
\,,
\end{equation}
where $f^*$ is the map
\begin{equation}\label{20130801:eq16d}
f^*:\,(F^\alpha_i)_{i\in\mc I,\alpha\in\mc A}\mapsto
(F^\alpha_1,\dots,F^\alpha_{n-2},F^\alpha_n)_{\alpha\in\mc A}
\,,
\end{equation}
and $g^*$ is the map
\begin{equation}\label{20130801:eq17d}
g^*:\,(G^\alpha)_{\alpha\in\mc A}\mapsto(0,\dots,0,G^\alpha,0)_{\alpha\in\mc A}
\,.
\end{equation}
\end{lemma}
\begin{proof}
The map $g^*$ is obviously injective.
Its image lies in $\mc E^*$, since ${B^\alpha_{n-1}}^*$ and ${A^\alpha_n}^*$
are divisible on the right by ${Q^\alpha}^*$.
The inclusion $\im(g^*)\subset\ker(f^*)$ is obvious,
and the opposite inclusion $\im(g^*)\subset\ker(f^*)$ follows immediately 
from the definition of $\mc E^*$.
We are left to prove that $f^*$ is surjectve.
Let then 
$(G^\alpha_i)_{1\leq i\leq n-1,\alpha\in\mc A}\in\mc E_2^*$.
We have, in particular,
\begin{equation}\label{20130801:eq20b}
{C^\alpha}^*{A^\alpha_{n-1}}^*G^\alpha_{n-2}={D^\alpha}^*{B^\alpha_n}^*G^\alpha_{n-1}
\,,\,\,\alpha\in\mc A\,.
\end{equation}
Recall that ${C^\alpha}^*$ and ${D^\alpha}^*$
are left coprime,
and (cf. \eqref{20130801:eq7b})
${C^\alpha}^*{\bar{B}^\alpha_{n-1}}{}^*={D^\alpha}^*{\bar{A}^\alpha_{n}}{}^*$
is their right least common multiple.
Therefore, by Theorem \ref{CR13}, there exists
$Z^\alpha\in\mc L^\ell$ such that
\begin{equation}\label{20130801:eq21b}
{A^\alpha_{n-1}}^*G^\alpha_{n-2}={\bar{B}^\alpha_{n-1}}{}^*Z^\alpha
\,\text{ and }\,
{B^\alpha_n}^*G^\alpha_{n-1}={\bar{A}^\alpha_{n}}{}^*Z^\alpha
\,.
\end{equation}
Since ${Q^\alpha}^*$ is non-degenerate,
there exists $X^\alpha\in\mc L^\ell$
such that $Z^\alpha={Q^\alpha}^*X^\alpha$.
Hence, equation \eqref{20130801:eq21b} can be rewritten as
\begin{equation}\label{20130801:eq22b}
{A^\alpha_{n-1}}^*G^\alpha_{n-2}={B^\alpha_{n-1}}^*X^\alpha
\,\text{ and }\,
{B^\alpha_n}^*G^\alpha_{n-1}={A^\alpha_{n}}^*X^\alpha
\,.
\end{equation}
Equation \eqref{20130801:eq22b} guarantees that
$(G^\alpha_1,\dots,G^\alpha_{n-2},X^\alpha,G^\alpha_{n-1})_{\alpha\in\mc A}$
lies in $\mc E^*$, and, clearly,
$$
(G^\alpha_i)_{1\leq i\leq n-1,\alpha\in\mc A}
=f^*\big((G^\alpha_1,\dots,G^\alpha_{n-2},X^\alpha,G^\alpha_{n-1})_{\alpha\in\mc A}\big)
\,.
$$
\end{proof}
By Lemma \ref{20130801:lem1b} we have
\begin{equation}\label{20130801:eq23b}
\dim_{\bar{\mc C}}\mc E_2\leq\dim_{\bar{\mc C}}\mc E
\leq\dim_{\bar{\mc C}}\mc E_2+\sum_{\alpha\in\mc A}\deg(Q^\alpha)\,,
\end{equation}
while
by Lemma \ref{20130801:lem2b} we have
\begin{equation}\label{20130801:eq24b}
\dim_{\bar{\mc C}}\mc E^*=\dim_{\bar{\mc C}}\mc E_2^*+\sum_{\alpha\in\mc A}\deg(Q^\alpha)\,.
\end{equation}
Combining equation \eqref{20130801:eq9b}
with equations \eqref{20130801:eq23b} and \eqref{20130801:eq24b},
we get \eqref{20130801:eq4}.
\end{proof}

\subsection{Minimal rational expression}\label{sec:4.2}

\begin{lemma}\label{20130703:lem1}
Let $H\in\Mat_{\ell\times\ell}\mc K(\partial)$
be a rational matrix pseudodifferential operator,
with a rational expression of the form \eqref{20130611:eq1}.
Then
$$
\sdeg(H)\leq\sum_{i\in\mc I,\alpha\in\mc A}\deg(B^\alpha_i)\,.
$$
\end{lemma}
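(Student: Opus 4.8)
The plan is to deduce the bound directly from the subadditivity properties of the singular degree established in Proposition \ref{20130703:lem3}, reducing everything to the single-factor case, which is essentially the definition of $\sdeg$.

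First I would observe that for each $\alpha\in\mc A$ and $i\in\mc I$ the operator $A^\alpha_i(B^\alpha_i)^{-1}$ is a well-defined element of $\Mat_{\ell\times\ell}\mc K(\partial)$, since $B^\alpha_i$ is non-degenerate, and that $A^\alpha_i(B^\alpha_i)^{-1}$ is itself a right fractional decomposition of this operator. Hence, since by Definition \ref{20130610:def} the singular degree is the degree of the denominator in a \emph{minimal} fractional decomposition, and the minimal such degree cannot exceed that of any given decomposition, we get $\sdeg(A^\alpha_i(B^\alpha_i)^{-1})\leq\deg(B^\alpha_i)$ for every $i,\alpha$.

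Next I would apply Proposition \ref{20130703:lem3}(a) repeatedly along each summand to obtain $\sdeg\big(A^\alpha_1(B^\alpha_1)^{-1}\cdots A^\alpha_n(B^\alpha_n)^{-1}\big)\leq\sum_{i\in\mc I}\sdeg(A^\alpha_i(B^\alpha_i)^{-1})\leq\sum_{i\in\mc I}\deg(B^\alpha_i)$, and then apply Proposition \ref{20130703:lem3}(b) repeatedly over the index set $\mc A$ to get $\sdeg(H)\leq\sum_{\alpha\in\mc A}\sdeg\big(A^\alpha_1(B^\alpha_1)^{-1}\cdots A^\alpha_n(B^\alpha_n)^{-1}\big)$; combining the two chains of inequalities yields $\sdeg(H)\leq\sum_{i\in\mc I,\alpha\in\mc A}\deg(B^\alpha_i)$, as desired.

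I do not expect any genuine obstacle: the statement is a purely formal consequence of the two subadditivity inequalities together with the trivial single-fraction bound. An alternative, more hands-on route would be to use Lemma \ref{20130801:lem} to rewrite each summand in the form $(A^\alpha_1X^\alpha_1)(B^\alpha_nX^\alpha_n)^{-1}$, bound $\deg(X^\alpha_n)$ by iterating Lemma \ref{20130703:lem2}(a), and then combine the summands via the right l.c.m. of the resulting denominators using Lemma \ref{20130705:lem2}; but this is strictly longer and yields nothing extra, so I would keep the Proposition \ref{20130703:lem3} argument.
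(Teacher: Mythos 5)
Your proof is correct and follows exactly the paper's route: the paper also deduces the bound immediately from Proposition \ref{20130703:lem3}, and your write-up simply spells out the iteration of parts (a) and (b) together with the trivial bound $\sdeg(A^\alpha_i(B^\alpha_i)^{-1})\leq\deg(B^\alpha_i)$.
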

\begin{proof}
It follows immediately from Proposition \ref{20130703:lem3}.
\end{proof}

\begin{definition}\label{20130611:defa}
We say that a rational expression \eqref{20130611:eq1} is \emph{minimal} 
if 
$$
\sdeg(H)
=
\sum_{i,\alpha}\deg(B^\alpha_i)
\,.
$$
\end{definition}
\begin{corollary}\label{20130801:cor}
A rational expression \eqref{20130611:eq1}
for a rational matrix pseudodifferential operator $H\in\Mat_{\ell\times\ell}\mc K(\partial)$
is minimal if and only if $\mc E=\mc E^*=0$
(cf. equations \eqref{20130801:eq2} and \eqref{20130801:eq3}).
\end{corollary}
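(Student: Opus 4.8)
Corollary \ref{20130801:cor} is an immediate consequence of Theorem \ref{20130801:thm} together with Lemma \ref{20130703:lem1}, so the proof is short: I would simply chase the inequalities \eqref{20130801:eq4}. Write $S=\sum_{i\in\mc I,\alpha\in\mc A}\deg(B^\alpha_i)$, $e=\dim_{\bar{\mc C}}\mc E$ and $e^*=\dim_{\bar{\mc C}}\mc E^*$; all three are nonnegative integers. The rational expression \eqref{20130611:eq1} is minimal, by Definition \ref{20130611:defa}, precisely when $\sdeg(H)=S$.

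\smallskip
First I would prove the ``if'' direction. If $\mc E=\mc E^*=0$, then $e=e^*=0$, so both the upper and lower bounds in \eqref{20130801:eq4} collapse to $S$, forcing $\sdeg(H)=S$; hence the expression is minimal. (One could also invoke Lemma \ref{20130703:lem1} only for the reverse inequality, but Theorem \ref{20130801:thm} already gives both.)

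\smallskip
For the ``only if'' direction, assume the expression is minimal, i.e. $\sdeg(H)=S$. The upper bound in \eqref{20130801:eq4} reads $S=\sdeg(H)\leq S-\max\{e,e^*\}$, hence $\max\{e,e^*\}\leq 0$. Since $e,e^*\geq 0$, this forces $e=e^*=0$, i.e. $\mc E=\mc E^*=0$.

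\smallskip
There is essentially no obstacle here: the content of the corollary is entirely carried by Theorem \ref{20130801:thm}, and the only thing to verify is the elementary arithmetic that the two bounds pinch to $S$ iff both dimensions vanish. The one point worth stating carefully is that $\dim_{\bar{\mc C}}\mc E=0$ over $\mc L$ is the same as $\mc E=0$ (a vector space over $\bar{\mc C}$ is zero iff its dimension is zero), which is immediate. I would write the proof as a two-line deduction from \eqref{20130801:eq4} and Definition \ref{20130611:defa}.

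\begin{proof}
By Theorem \ref{20130801:thm} we have, writing $S=\sum_{i\in\mc I,\alpha\in\mc A}\deg(B^\alpha_i)$,
$$
S-\dim_{\bar{\mc C}}\mc E-\dim_{\bar{\mc C}}\mc E^*
\leq\sdeg(H)
\leq
S-\max\big\{\dim_{\bar{\mc C}}\mc E,\,\dim_{\bar{\mc C}}\mc E^*\big\}
\,.
$$
If $\mc E=\mc E^*=0$, both bounds equal $S$, so $\sdeg(H)=S$ and the rational expression \eqref{20130611:eq1} is minimal by Definition \ref{20130611:defa}. Conversely, if the expression is minimal, then $\sdeg(H)=S$, so the right-hand inequality gives $\max\{\dim_{\bar{\mc C}}\mc E,\dim_{\bar{\mc C}}\mc E^*\}\leq0$; since these dimensions are nonnegative, $\dim_{\bar{\mc C}}\mc E=\dim_{\bar{\mc C}}\mc E^*=0$, i.e. $\mc E=\mc E^*=0$.
\end{proof}
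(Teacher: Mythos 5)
Your proof is correct and follows exactly the paper's route: the paper also deduces the corollary immediately from Theorem \ref{20130801:thm}, and your inequality chase just spells out the (straightforward) details the paper leaves implicit.
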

\begin{proof}
It follows immediately from Theorem \ref{20130801:thm}.
\end{proof}

\subsection{The main results on the association relation}\label{sec:4.2b}

\begin{theorem}\label{20130704:prop}
Let $H\in\Mat_{\ell\times\ell}\mc K(\partial)$ and let $\xi,P\in\mc K^\ell$. Then
\begin{enumerate}[(a)]
\item
The association relation
\begin{equation}\label{20130802:eq2}
\xi\assk{\{A^\alpha_i,B^\alpha_i\}_{i,\alpha}}{\mc K_1}P
\,,
\end{equation}
is independent of the minimal rational expression \eqref{20130611:eq1} for $H$
and of the intermediate differential field $\mc K\subset\mc K_1\subset\mc L$.
In particular, it is equivalent to $\xi\ass{H}P$.
\item
If $\xi\ass{H}P$, then the association relation
\begin{equation}\label{20130802:eq2}
\xi\assk{\{A^\alpha_i,B^\alpha_i\}_{i,\alpha}}{\mc L}P
\,,
\end{equation}
holds for any rational expression \eqref{20130611:eq1} for $H$.
\end{enumerate}
\end{theorem}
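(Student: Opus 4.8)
The plan is to prove part (a) by first reducing to the case of a single minimal fractional decomposition, and then part (b) by a direct factorwise argument. For part (b), suppose $\xi \ass{H} P$. By Definition \ref{20130703:def1}, there is a fractional decomposition $H = AB^{-1}$ and $F \in \mc K^\ell$ with $\xi = BF$, $P = AF$. Given an arbitrary rational expression \eqref{20130611:eq1}, I would repeatedly apply Lemma \ref{20130801:lem} (taking right l.c.m.'s) to rewrite each summand $A^\alpha_1(B^\alpha_1)^{-1}\dots A^\alpha_n(B^\alpha_n)^{-1}$ as $(A^\alpha_1 X^\alpha_1)(B^\alpha_n X^\alpha_n)^{-1}$ with the matrices $X^\alpha_i$ satisfying \eqref{20130801:eq1}. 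Then $H = \sum_\alpha \bar A^\alpha (\bar B^\alpha)^{-1}$ with $\bar A^\alpha = A^\alpha_1 X^\alpha_1$, $\bar B^\alpha = B^\alpha_n X^\alpha_n$; taking a common right multiple of the $\bar B^\alpha$ gives a single fractional decomposition $H = \tilde A \tilde B^{-1}$. By Theorem \ref{th:6.4}(b) any two fractional decompositions differ by a non-degenerate factor, and since $\mc L$ is linearly closed one can solve the relevant inhomogeneous systems over $\mc L$ (Proposition \ref{CDSK13b:prop1}) to pull back the vector $F$ through all these factorizations, obtaining the intermediate vectors $F^\alpha_i \in \mc L^\ell$ witnessing \eqref{20130611:eq2}. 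This proves (b).

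For part (a), the key point is that when \eqref{20130611:eq1} is \emph{minimal}, the space $\mc E$ of solutions of the zero association relation vanishes (Corollary \ref{20130801:cor}), so the witnessing vectors $F^\alpha_i$ are uniquely determined by $\xi$ and $P$. More precisely, I would argue as follows. First, from $H = \tilde A \tilde B^{-1}$ as constructed above, and using Lemma \ref{20130705:lem3} (whose hypotheses hold because minimality of \eqref{20130611:eq1} forces all the intermediate degree bounds to be equalities, hence all the relevant coprimeness conditions), one shows that the natural fractional decomposition coming from \eqref{20130611:eq1} is itself minimal. Next, suppose $\xi \ass{H} P$; by part (b) there is a solution $(F^\alpha_i)$ over $\mc L$. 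If $(G^\alpha_i)$ is any other solution over some $\mc K_1 \subset \mc L$, then the difference $(F^\alpha_i - G^\alpha_i)$ is a solution of the zero association relation $0 \assk{}{\mc L} 0$, i.e. an element of $\mc E$, which is $0$ by Corollary \ref{20130801:cor}. Hence the solution is unique over $\mc L$; in particular if it exists over $\mc L$ it must already have entries giving the relation over any intermediate $\mc K_1$ — but this requires care, since a priori a solution might exist over $\mc L$ but not over a smaller field.

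To handle that subtlety, I would show the following chain of equivalences: $\xi \ass{H} P$ $\iff$ \eqref{20130611:eq2} has a solution over $\mc K$ $\iff$ over $\mc K_1$ $\iff$ over $\mc L$. The implication "over $\mc K$ $\Rightarrow$ over $\mc L$" is a trivial inclusion, and "$\xi \ass{H} P \Rightarrow$ over $\mc L$" is part (b). For the reverse, "over $\mc L$ $\Rightarrow$ $\xi \ass{H} P$": given a solution $(F^\alpha_i)$ over $\mc L$, contracting through the factorizations shows $\xi = \tilde B F$, $P = \tilde A F$ for the pulled-back vector $F = X^\alpha_n F^\alpha_n \in \mc L^\ell$ (independent of $\alpha$ because $\mc E = 0$ again forces consistency); since $\tilde A \tilde B^{-1}$ is minimal, $\ker_{\mc L}\tilde A \cap \ker_{\mc L}\tilde B = 0$ by Theorem \ref{th:6.4}(a)(iv), and one checks $F$ actually lies in $\mc K^\ell$ because $\xi, P \in \mc K^\ell$ and $F$ is the unique $\mc L$-solution of an inhomogeneous system with coefficients and right-hand side in $\mc K$ (uniqueness forces Galois-invariance, hence rationality). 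This gives $\xi \ass{H} P$ via the decomposition $\tilde A \tilde B^{-1}$, and running the construction backward produces the solution over $\mc K$ itself.

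The main obstacle I anticipate is precisely this descent step — showing that existence of a solution over $\mc L$ forces existence over $\mc K$ (or over any intermediate $\mc K_1$). The uniqueness provided by $\mc E = 0$ is the essential tool: a system with coefficients in $\mc K$ that has a \emph{unique} solution over the linear closure $\mc L$ must have that solution fixed by all differential automorphisms of $\mc L$ over $\mc K$, hence lying in $\mc K$ (using that $\mc L$ is the linear closure with constants $\bar{\mc C}$, and Corollary \ref{DSK11:prop} / Proposition \ref{CDSK13b:prop1} to control solution spaces). Making this rigorous, rather than the factorization bookkeeping, is where the real work lies; the rest is an assembly of Lemma \ref{20130801:lem}, Lemma \ref{20130705:lem3}, Corollary \ref{20130801:cor}, and Theorem \ref{th:6.4}.
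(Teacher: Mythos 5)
Your part (b) and your final descent step are essentially the paper's route: pulling the witness $F$ of a fractional decomposition back and forth through Lemma \ref{20130801:lem} and Theorem \ref{th:6.4}(b), solving the auxiliary inhomogeneous systems over $\mc L$ (this is Lemmas \ref{20130802:prop4} and \ref{20130802:prop3} in the paper), and then using $\mc E=0$ to get uniqueness of the solution for fixed $(\xi,P)$ and a differential-Galois argument to force the unique $\mc L$-solution into $\mc K^\ell$ (the paper's Lemma \ref{20130802:prop2}, which needs the Picard--Vessiot filtration of $\mc L$ plus ordinary Galois theory for $\bar{\mc C}\otimes_{\mc C}\mc K$, not just an appeal to ``automorphisms of $\mc L$ over $\mc K$'').

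However, there is a genuine gap in the direction ``chain solution over $\mc L$ (or $\mc K_1$) $\Rightarrow\ \xi\ass{H}P$'', which is the real content of part (a). Your proposed contraction, ``$\xi=\tilde B F$, $P=\tilde A F$ for the pulled-back vector $F=X^\alpha_nF^\alpha_n$, independent of $\alpha$ because $\mc E=0$ forces consistency'', does not work: since $\tilde B=B^\alpha_nX^\alpha_nC^\alpha$, what is needed is a vector $F$ with $X^\alpha_nC^\alpha F=F^\alpha_n$ (and compatibly $X^\alpha_iC^\alpha F=F^\alpha_i$ for all $i$), i.e.\ a \emph{division} of the chain solution through the factorization, not the product $X^\alpha_nF^\alpha_n$; and $\mc E=0$ only gives uniqueness of chain solutions for fixed $(\xi,P)$, it does not produce the lift. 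The existence of the lift is exactly where minimality enters through coprimeness: within each summand one must collapse adjacent factors using Theorem \ref{CR13}, which applies because minimality forces $A^\alpha_i$ and $B^\alpha_{i-1}$ to be left coprime (this is the paper's induction on $n$ in Lemma \ref{20130802:prop1}); and across summands one must glue the vectors $F^\alpha$ solving $B^\alpha F^\alpha=\xi$ into a single $F$ with $F^\alpha=C^\alpha F$, which requires Theorem \ref{20130705:thm} and hence the \emph{strong} left coprimeness of the denominators $B^1,\dots,B^N$ of the minimal fractional decompositions of the summands, supplied by Lemma \ref{20130705:lem3}(b). Pairwise coprimeness would not suffice here (Remark \ref{20130725:rem}), so this gluing cannot be replaced by the ``consistency'' appeal to $\mc E=0$; your sketch uses Lemma \ref{20130705:lem3} only to certify minimality of the assembled fraction and never invokes Theorem \ref{CR13} or Theorem \ref{20130705:thm}, so the central step of part (a) is missing.
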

The rest of this section will be devoted to the proof of Theorem \ref{20130704:prop}.
\begin{lemma}\label{20130802:prop4}
Let $H\in\Mat_{\ell\times\ell}\mc K(\partial)$
and let $H=AB^{-1}$ be a minimal fractional decomposition for $H$.
Then, for every $\xi,P\in\mc K^\ell$,
$$
\xi\ass{H}P
\,\text{ if and only if }\,
\xi\assk{\{A,B\}}{\mc K}P
\,.
$$
\end{lemma}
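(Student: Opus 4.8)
The plan is to prove Lemma \ref{20130802:prop4} by unwinding the two definitions and using the characterization of minimal fractional decompositions from Theorem \ref{th:6.4}. The statement $\xi\ass{H}P$ (Definition \ref{20130703:def1}) says there is \emph{some} fractional decomposition $H=\tilde A\tilde B^{-1}$ and some $F\in\mc K^\ell$ with $\xi=\tilde BF$, $P=\tilde AF$, whereas $\xi\assk{\{A,B\}}{\mc K}P$ (Definition \ref{20130703:def2}, with $n=1$, $N=1$) says there is $G\in\mc K^\ell$ with $\xi=BG$ and $P=AG$ for the \emph{specific} minimal decomposition $H=AB^{-1}$. One direction is trivial: if $\xi\assk{\{A,B\}}{\mc K}P$, then taking the fractional decomposition $H=AB^{-1}$ and $F=G$ in Definition \ref{20130703:def1} immediately gives $\xi\ass{H}P$.

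For the nontrivial direction, suppose $\xi\ass{H}P$, witnessed by a fractional decomposition $H=\tilde A\tilde B^{-1}$ and $F\in\mc K^\ell$ with $\xi=\tilde BF$, $P=\tilde AF$. By Theorem \ref{th:6.4}(b), since $H=AB^{-1}$ is minimal, there exists a non-degenerate $E\in\Mat_{\ell\times\ell}\mc K[\partial]$ with $\tilde A=AE$ and $\tilde B=BE$. The plan is then to set $G=EF\in\mc K^\ell$ and check directly that $BG=BEF=\tilde BF=\xi$ and $AG=AEF=\tilde AF=P$, so that $\xi\assk{\{A,B\}}{\mc K}P$. The only subtle point is that $E$ is a differential operator with coefficients in $\mc K$ and $F\in\mc K^\ell$, so $G=E(F)$ indeed lies in $\mc K^\ell$ — there is no need to pass to the linear closure $\mc L$. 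Everything is then a routine substitution.

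I expect no real obstacle here: the entire content is the lifting property of Theorem \ref{th:6.4}(b) that any fractional decomposition factors through a minimal one, combined with the observation that the solution $F$ can be pushed forward through $E$ while staying in $\mc K^\ell$. The one place to be mildly careful is the direction of the fractional decomposition: Theorem \ref{th:6.4}(b) is stated for \emph{right} fractional decompositions $H=AB^{-1}$, and Definition \ref{20130703:def1} also uses right fractional decompositions, so the factorization $\tilde A=AE$, $\tilde B=BE$ applies without modification. This lemma will serve as the base case ($n=N=1$) for the inductive proof of Theorem \ref{20130704:prop}.

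\begin{proof}
If $\xi\assk{\{A,B\}}{\mc K}P$, let $G\in\mc K^\ell$ be such that $\xi=BG$ and $P=AG$. Then $H=AB^{-1}$ is a fractional decomposition for $H$, and $G$ witnesses $\xi\ass{H}P$ in the sense of Definition \ref{20130703:def1}.

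Conversely, assume $\xi\ass{H}P$. Then there exist a fractional decomposition $H=\tilde A\tilde B^{-1}$, with $\tilde A,\tilde B\in\Mat_{\ell\times\ell}\mc K[\partial]$ and $\tilde B$ non-degenerate, and an element $F\in\mc K^\ell$ such that $\xi=\tilde BF$ and $P=\tilde AF$. Since $H=AB^{-1}$ is a minimal fractional decomposition, by Theorem \ref{th:6.4}(b) there exists a non-degenerate matrix differential operator $E\in\Mat_{\ell\times\ell}\mc K[\partial]$ such that $\tilde A=AE$ and $\tilde B=BE$. Let $G=EF\in\mc K^\ell$. Then
$$
BG=BEF=\tilde BF=\xi
\,\text{ and }\,
AG=AEF=\tilde AF=P
\,,
$$
so that $\xi\assk{\{A,B\}}{\mc K}P$.
\end{proof}
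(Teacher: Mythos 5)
Your proof is correct and follows the same route as the paper: the ``if'' direction is immediate, and for the converse both you and the paper invoke Theorem \ref{th:6.4}(b) to write $\tilde A=AE$, $\tilde B=BE$ for a non-degenerate $E$, and then push the witness $F$ forward to $EF\in\mc K^\ell$. No gaps.
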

\begin{proof}
The ``if'' part is obvious.
Recall that, by definition, $\xi\ass{H}P$
if and only if there exists a fractional decomposition 
$H=\widetilde{A}\widetilde{B}^{-1}$ for $H$
such that $\xi\assk{\{\widetilde{A},\widetilde{B}\}}{\mc K}P$.
On the other hand,
by Theorem \ref{th:6.4}(b)
there exists a non-degenerate matrix $D\in\Mat_{\ell\times\ell}\mc K[\partial]$
such that $\widetilde{A}=AD$ and $\widetilde{B}=BD$.
Therefore, if $F\in\mc K^\ell$ is a solution for 
the association relation $\xi\assk{\{\widetilde{A},\widetilde{B}\}}{\mc K}P$,
then $DF$ is a solution for $\xi\assk{\{A,B\}}{\mc K}P$.
\end{proof}
\begin{lemma}\label{20130802:prop3}
Let $H\in\Mat_{\ell\times\ell}\mc K(\partial)$,
let $H=AB^{-1}$ be a minimal fractional decomposition for $H$,
and let \eqref{20130611:eq1} be an arbitrary rational expression for $H$.
Then, for every $\xi,P\in\mc K^\ell$,
$$
\xi\assk{\{A,B\}}{\mc L}P
\,\text{ implies }\,
\xi\assk{\{A^\alpha_i,B^\alpha_i\}_{i,\alpha}}{\mc L}P
\,.
$$
\end{lemma}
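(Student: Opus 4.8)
The plan is to induct on the structure of the rational expression \eqref{20130611:eq1}, reducing it step by step to a single fraction $AB^{-1}$ while keeping track of the association relation. Given $\xi \assk{\{A,B\}}{\mc L} P$, there exists $F \in \mc L^\ell$ with $\xi = BF$ and $P = AF$. I need to produce a solution $\{F^\alpha_i\}_{i,\alpha}$ over $\mc L$ for the equations \eqref{20130611:eq2} with the same $\xi$ and $P$. The natural approach mirrors the proof of Lemma \ref{20130801:thm}: use the same reductions, replacing each inner product $A^\alpha_{n-1}(B^\alpha_n)^{-1}$ by applying Lemma \ref{20130703:lem2}(a) to factor out a left gcd, and at each stage use Theorem \ref{CR13} (the $N=2$ strong coprimeness theorem) to lift a solution of the reduced expression back to a solution of the original one.

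First I would handle the base case: a sum of simple fractions, $H = A^1(B^1)^{-1} + \dots + A^N(B^N)^{-1}$. Here I would show, by induction on $N$ together with Lemma \ref{20130705:lem3}(c), that if the sum is a minimal expression then $H = (A^1C^1 + \dots + A^NC^N)B^{-1}$ is a minimal fractional decomposition, where $B = B^1C^1 = \dots = B^NC^N$ is the right l.c.m. But here the hypothesis is weaker — we only know $H = AB^{-1}$ is \emph{some} minimal decomposition and the sum is \emph{arbitrary}. So from $\xi = BF$, $P = AF$ I must construct $F^\alpha \in \mc L^\ell$ with $B^\alpha F^\alpha$ all equal to a common value and $\sum A^\alpha F^\alpha = P$. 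The idea: since $H = \sum A^\alpha (B^\alpha)^{-1}$ as rational operators, for each $\alpha$ write $A^\alpha(B^\alpha)^{-1} = \widetilde{A}^\alpha \widetilde{B}^{-1}$ over a common non-degenerate right denominator $\widetilde{B}$ (a common right multiple of the $B^\alpha$, obtained by iterating Lemma \ref{20130703:lem2}), so $H = (\sum \widetilde{A}^\alpha)\widetilde{B}^{-1}$; then relate $\widetilde{B}$ to $B$ via Theorem \ref{th:6.4}(b) and linear closedness of $\mc L$ (Proposition \ref{CDSK13b:prop1}) to pull the solution $F$ back through $\widetilde{B}$, and then distribute.

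For the inductive step (each summand a product of $n \geq 2$ fractions), I would peel off the innermost factor exactly as in the proof of Theorem \ref{20130801:thm}: for each $\alpha$ let $Q^\alpha$ be the left gcd of $B^\alpha_{n-1}$ and $A^\alpha_n$, write $B^\alpha_{n-1} = Q^\alpha \bar B^\alpha_{n-1}$, $A^\alpha_n = Q^\alpha \bar A^\alpha_n$ with $\bar B^\alpha_{n-1}, \bar A^\alpha_n$ left coprime, form their right l.c.m. $\bar B^\alpha_{n-1} C^\alpha = \bar A^\alpha_n D^\alpha$, and rewrite $H$ with $n-1$ factors per summand as in \eqref{20130801:eq8c}. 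By the inductive hypothesis applied to this shorter expression and the same minimal decomposition $H = AB^{-1}$, a solution $\{G^\alpha_i\}$ over $\mc L$ exists; then for each $\alpha$ set $F^\alpha_i = G^\alpha_i$ for $i \leq n-2$, $F^\alpha_{n-1} = C^\alpha G^\alpha_{n-1}$, $F^\alpha_n = D^\alpha G^\alpha_{n-1}$ — the identity $\bar A^\alpha_n F^\alpha_n = \bar B^\alpha_{n-1} F^\alpha_{n-1}$ forces $A^\alpha_n F^\alpha_n = B^\alpha_{n-1} F^\alpha_{n-1}$ after multiplying by $Q^\alpha$, and all other equations are inherited from $\{G^\alpha_i\}$.

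The main obstacle will be the base case of a sum of simple fractions: the cleanest route is to invoke Lemma \ref{20130705:lem3} to reduce to the single-fraction case, but that lemma assumes the sum is itself a minimal expression, which is \emph{not} hypothesized here. So I must either argue directly — passing to a common right denominator $\widetilde B = B^\alpha C^\alpha$ (the l.c.m.), using Theorem \ref{th:6.4}(b) to write $\widetilde B = BE$ for non-degenerate $E$, using Proposition \ref{CDSK13b:prop1} over the linearly closed field $\mc L$ to find $\widetilde F$ with $F = E\widetilde F$, checking $\xi = \widetilde B \widetilde F$ and $P = (\sum \widetilde A^\alpha)\widetilde F$, and finally setting $F^\alpha = C^\alpha \widetilde F$ — or else first reduce further down to $n=N=1$ (a plain equality of fractions $AB^{-1} = A'B'^{-1}$, where the claim is immediate from Theorem \ref{th:6.4}(b) and linear closedness) and build the sum case on top of it. Either way, the only subtlety is making sure the lifted solution lands over $\mc L$ and not a larger field, which is exactly what linear closedness of $\mc L$ guarantees.
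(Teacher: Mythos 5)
Your proposal is correct, and its engine is the same as the paper's: rewrite the whole expression over a single right denominator, compare the resulting fraction with the minimal one $AB^{-1}$ via Theorem \ref{th:6.4}(b), divide the given $F$ by the non-degenerate factor $D$ using linear closedness of $\mc L$ (Proposition \ref{CDSK13b:prop1}), and push the resulting vector forward through the quotient operators. The difference is organizational: the paper collapses each product $A^\alpha_1(B^\alpha_1)^{-1}\cdots A^\alpha_n(B^\alpha_n)^{-1}$ in one shot using Lemma \ref{20130801:lem} (the chain $B^\alpha_iX^\alpha_i=A^\alpha_{i+1}X^\alpha_{i+1}$), takes the right l.c.m. of the $B^\alpha_nX^\alpha_n$, and then writes the full solution explicitly as $Z^\alpha=C^\alpha Z$, $Z^\alpha_i=X^\alpha_iZ^\alpha$; you instead induct on $n$, peeling the innermost factor of each summand as in the proof of Theorem \ref{20130801:thm}, and you invoke the Theorem \ref{th:6.4}(b)/linear-closedness comparison only in the base case $n=1$ (which you work out correctly: $\widetilde B=B^\alpha C^\alpha$, $\widetilde B=BE$, $F=E\widetilde F$, $F^\alpha=C^\alpha\widetilde F$). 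Both routes are valid; the paper's avoids the induction and makes the final solution completely explicit, while yours reuses the reduction already set up for Theorem \ref{20130801:thm}. Two small trims to your write-up: in this direction (from the minimal fraction to the arbitrary expression) no coprimality is ever needed, so the appeal to Theorem \ref{CR13} in your opening paragraph is superfluous --- your own explicit assignment $F^\alpha_{n-1}=C^\alpha G^\alpha_{n-1}$, $F^\alpha_n=D^\alpha G^\alpha_{n-1}$ never uses it --- and for the same reason extracting the left g.c.d. $Q^\alpha$ is unnecessary here: a plain right l.c.m. of $B^\alpha_{n-1}$ and $A^\alpha_n$ suffices, since no degree counting is involved in this lemma.
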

\begin{proof}
Consider the rational expression \eqref{20130611:eq1}.
For every $\alpha\in\mc A$,
we can apply Lemma \ref{20130801:lem} to get matrices
$X^\alpha_1,\dots,X^\alpha_n\in\Mat_{\ell\times\ell}\mc K[\partial]$,
with $X^\alpha_n$ non-degenerate,
such that
\begin{equation}\label{20130801:eq1c}
B^\alpha_iX^\alpha_i=A^\alpha_{i+1}X^\alpha_{i+1}
\text{ for all } i=1,\dots,n-1,\,\alpha\in\mc A
\,.
\end{equation}
Then the rational matrix $H$ admits
the following new rational expression:
\begin{equation}\label{20130801:eq1d}
H=\sum_{\alpha\in\mc A}
(A^\alpha_1X^\alpha_1)(B^\alpha_nX^\alpha_n)^{-1}
\,.
\end{equation}
Next, let
\begin{equation}\label{20130802:eq3}
\widetilde{B}=B^1_nX^1_nC^1=\dots=B^N_nX^N_nC^N\,,
\end{equation}
be the least right common multiple of $B^1_nX^1_n,\dots,B^N_nX^N_n$.
We thus get the fractional decomposition $H=\widetilde{A}\widetilde{B}^{-1}$, where:
\begin{equation}\label{20130801:eq1e}
\widetilde{A}=
A^1_1X^1_1C^1+\dots+A^N_1X^N_1C^N
\,.
\end{equation}
By Theorem \ref{th:6.4}(b)
there exists a non-degenerate matrix $D\in\Mat_{\ell\times\ell}\mc K[\partial]$
such that
\begin{equation}\label{20130802:eq4}
\widetilde{A}=AD
\,\text{ and }\,
\widetilde{B}=BD
\end{equation}
By assumption, $\xi\assk{\{A,B\}}{\mc L}P$.
In other words, there exists $F\in\mc L^\ell$ such that $BF=\xi$ and $AF=P$.
Since $D$ is non-degenerate and $\mc L$ is linearly closed,
there exists $Z\in\mc L^\ell$ such that $F=DZ$.
Therefore, $Z$ is a solution for $\xi\assk{\{\widetilde{A},\widetilde{B}\}}{\mc L}P$.
It is straightforward to check, using equations 
\eqref{20130801:eq1c}, \eqref{20130802:eq3} and \eqref{20130801:eq1e},
that, letting
$Z^\alpha=C^\alpha Z,\,\alpha\in\mc A$,
we get a solution for 
$$
\xi\assk{\{A^\alpha_1X^\alpha_1,B^\alpha_nX^\alpha_n\}_{\alpha\in\mc A}}{\mc L}P
\,,
$$
and letting $Z^\alpha_i=X^\alpha_iZ^\alpha,\,i\in\mc I,\alpha\in\mc A$,
we get a solution for 
$$
\xi\assk{\{A^\alpha_i,B^\alpha_i\}_{i\in\mc I,\alpha\in\mc A}}{\mc L}P
\,.
$$
\end{proof}
\begin{lemma}\label{20130802:prop1}
Let $H\in\Mat_{\ell\times\ell}\mc K(\partial)$,
let $H=AB^{-1}$ be a minimal fractional decomposition for $H$,
and let \eqref{20130611:eq1} be a minimal rational expression for $H$.
Then, for every $\xi,P\in\mc K^\ell$, we have
$$
\xi\assk{\{A,B\}}{\mc L}P
\,\text{ if and only if }\,
\xi\assk{\{A^\alpha_i,B^\alpha_i\}_{i,\alpha}}{\mc L}P
\,.
$$
\end{lemma}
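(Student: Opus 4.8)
The plan is to prove the two implications separately, using the machinery already established. For the forward direction ($\xi\assk{\{A,B\}}{\mc L}P$ implies $\xi\assk{\{A^\alpha_i,B^\alpha_i\}_{i,\alpha}}{\mc L}P$), I would simply invoke Lemma \ref{20130802:prop3}: that lemma gives exactly this implication for an \emph{arbitrary} rational expression \eqref{20130611:eq1}, so minimality is not even needed here. The real content is the converse: assuming a solution $\{F^\alpha_i\}_{i,\alpha}$ over $\mc L$ for the association relation attached to the minimal rational expression \eqref{20130611:eq1}, I must produce $F\in\mc L^\ell$ with $BF=\xi$ and $AF=P$.

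The strategy for the converse is to reduce \eqref{20130611:eq1} to a single fraction by the same mechanism used in the proof of Lemma \ref{20130802:prop3}, but now exploiting minimality to control degrees. Concretely, for each $\alpha$ apply Lemma \ref{20130801:lem} to obtain $X^\alpha_1,\dots,X^\alpha_n$ with $X^\alpha_n$ non-degenerate and $B^\alpha_iX^\alpha_i=A^\alpha_{i+1}X^\alpha_{i+1}$, giving the rational expression $H=\sum_\alpha(A^\alpha_1X^\alpha_1)(B^\alpha_nX^\alpha_n)^{-1}$; then take $\widetilde B=B^1_nX^1_nC^1=\dots=B^N_nX^N_nC^N$, the right l.c.m., with $\widetilde A=\sum_\alpha A^\alpha_1X^\alpha_1C^\alpha$, so that $H=\widetilde A\widetilde B^{-1}$. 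Given a solution $\{F^\alpha_i\}$ of the association relation for \eqref{20130611:eq1}, the equations \eqref{20130611:eq2} force $B^\alpha_nX^\alpha_n$ applied to appropriate elements to agree across $\alpha$ (after tracking $F^\alpha_n$ back through the $X^\alpha_i$), hence by Theorem \ref{20130705:thm} — applied to the strongly left coprime family $B^1_nX^1_n,\dots,B^N_nX^N_n$ — there is a common $Z\in\mc L^\ell$ with the $F^\alpha$'s expressed as $C^\alpha Z$, yielding a solution of $\xi\assk{\{\widetilde A,\widetilde B\}}{\mc L}P$, i.e. $\widetilde BZ=\xi$, $\widetilde AZ=P$. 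Finally, since \eqref{20130611:eq1} is minimal, $H=\widetilde A\widetilde B^{-1}$ has $\deg(\widetilde B)=\sdeg(H)=\deg(B)$, so by Theorem \ref{th:6.4}(b) one has $\widetilde A=AD$, $\widetilde B=BD$ with $D$ of degree zero, hence invertible in $\Mat_{\ell\times\ell}\mc K[\partial]$; then $F=DZ$ satisfies $BF=BDZ=\widetilde BZ=\xi$ and $AF=ADZ=\widetilde AZ=P$.

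The step I expect to be the main obstacle is justifying that the solution $\{F^\alpha_i\}$ of the association relation for \eqref{20130611:eq1} actually produces, via the $X^\alpha_i$, a genuine solution of $\xi\assk{\{A^\alpha_1X^\alpha_1,\,B^\alpha_nX^\alpha_n\}_\alpha}{\mc L}P$ to which Theorem \ref{20130705:thm} applies — i.e. verifying that the given $F^\alpha_n$ lie in the image of the $X^\alpha_n$ and that the intermediate relations in \eqref{20130611:eq2} telescope correctly against $B^\alpha_iX^\alpha_i=A^\alpha_{i+1}X^\alpha_{i+1}$. Here I would use that $\mc L$ is linearly closed so each $X^\alpha_n$ is surjective on $\mc L^\ell$, choose preimages, and then check by downward induction on $i$ that the resulting vectors satisfy $B^\alpha_iX^\alpha_i W^\alpha = A^\alpha_{i+1}X^\alpha_{i+1}W^\alpha$, reducing everything to the single-fraction picture; the key point making Theorem \ref{20130705:thm} applicable is that minimality of \eqref{20130611:eq1} — via Lemma \ref{20130705:lem3}(b) applied to the reduced expression, or directly via the degree count $\deg(\widetilde B)=\sum\deg(B^\alpha_nX^\alpha_n)$ forced by minimality — guarantees the family $\{B^\alpha_nX^\alpha_n\}$ is strongly left coprime. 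Once that is in place the remainder is the routine degree bookkeeping and the invocation of Theorem \ref{th:6.4}(b) sketched above.
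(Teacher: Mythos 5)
Your forward implication is fine and is exactly the paper's: Lemma \ref{20130802:prop3} gives it for an arbitrary rational expression, with no minimality needed. The gap is at the step you yourself flagged as the main obstacle, and the fix you propose does not close it. Picking an arbitrary preimage $W^\alpha\in\mc L^\ell$ of $F^\alpha_n$ under $X^\alpha_n$ (possible since $\mc L$ is linearly closed) only gives $B^\alpha_nX^\alpha_nW^\alpha=\xi$; the ``downward induction'' you describe, $B^\alpha_iX^\alpha_iW^\alpha=A^\alpha_{i+1}X^\alpha_{i+1}W^\alpha$, is just the operator identity \eqref{20130801:eq1} applied to $W^\alpha$ and says nothing about the given chain $\{F^\alpha_i\}$. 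What you actually need is $\sum_\alpha A^\alpha_1X^\alpha_1W^\alpha=\sum_\alpha A^\alpha_1F^\alpha_1=P$, and this fails for a general preimage: from $B^\alpha_{n-1}X^\alpha_{n-1}W^\alpha=A^\alpha_nF^\alpha_n=B^\alpha_{n-1}F^\alpha_{n-1}$ you only learn that $X^\alpha_{n-1}W^\alpha-F^\alpha_{n-1}\in\ker{}_{\mc L}B^\alpha_{n-1}$, and this error propagates through $A^\alpha_{n-1},\dots,A^\alpha_1$; minimality makes it worse, not better, since it forces $\ker{}_{\mc L}A^\alpha_1\cap\ker{}_{\mc L}B^\alpha_1=0$, so a nonzero error at level $1$ necessarily changes $P$. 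Already for $N=1$, $n=2$ the preimages of $F_2$ under $X_2=\widetilde{B}_1$ form the coset $Z+\ker{}_{\mc L}\widetilde{B}_1$, and every nonzero shift changes $A_1X_1W$; so your argument only yields $\xi\assk{\{A,B\}}{\mc L}P'$ for some $P'$ that need not equal $P$. You also cannot repair this with Corollary \ref{20130801:cor}: the difference chain $(F^\alpha_i-X^\alpha_iW^\alpha)_{i,\alpha}$ is a solution of the association relation $0\to P-P'$, not of $0\to 0$, so $\mc E=0$ gives nothing.

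The missing mechanism — the one the paper uses — is to lift the given chain \emph{exactly} via Theorem \ref{CR13}, using the left coprimeness that minimality forces at each stage: since the expression is minimal, $A^\alpha_n$ and $B^\alpha_{n-1}$ are left coprime, so $A^\alpha_nF^\alpha_n=B^\alpha_{n-1}F^\alpha_{n-1}$ produces $\widetilde{F}^\alpha_{n-1}$ with $F^\alpha_{n-1}=\widetilde{A}^\alpha_n\widetilde{F}^\alpha_{n-1}$ and $F^\alpha_n=\widetilde{B}^\alpha_{n-1}\widetilde{F}^\alpha_{n-1}$, preserving the endpoint values and hence $P$; the paper iterates this as an induction on $(N,n)$, peeling off one factor at a time, and then treats the sum over $\alpha$ with Lemma \ref{20130705:lem3}, Theorem \ref{20130705:thm} and Theorem \ref{th:6.4} — that last portion is essentially what you wrote, and your degree bookkeeping there (minimality of the collapsed expression, strong left coprimeness of the $B^\alpha_nX^\alpha_n$, and $D$ invertible of degree zero) is sound. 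If you want to keep the ``collapse each product at once'' formulation, you must construct $W^\alpha$ by iterating Theorem \ref{CR13} up the chain, using at each step the left coprimeness (of $A^\alpha_{i+1}$ with the accumulated denominator) forced by the degree count from minimality, rather than by choosing arbitrary preimages.
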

\begin{proof}
The ``only if'' part is given by Lemma \ref{20130802:prop3}, so we only need to prove the ``if'' part.
Assume 
that $\xi\assk{\{A^\alpha_i,B^\alpha_i\}_{i,\alpha}}{\mc L}P$.
We shall prove that $\xi\assk{\{A,B\}}{\mc L}P$
by induction on the ordered pair $(N,n)$.
%
%
In the case $N=n=1$ the statement is obvious since,
by Theorem \ref{th:6.4}, two minimal fractional decompositions for $H$
are obtained from each other by multiplication on the right
by an invertible $\ell\times\ell$ matrix differential operator.

%
Next, we consider the case when $N=1$ and $n\geq2$.
In this case, the rational expression \eqref{20130611:eq1} is 
\begin{equation}\label{20130704:eq3}
H=A_1B_1^{-1}\dots A_{n-1}B_{n-1}^{-1}A_nB_n^{-1}\,,
\end{equation}
and by the minimality assumption we have
\begin{equation}\label{20130704:eq1}
\sdeg(H)=\deg(B_1)+\deg(B_2)+\dots+\deg(B_n)
\,.
\end{equation}
By Lemma \ref{20130703:lem2}(a),
there exist right coprime matrix differential operators
$\widetilde{A}_n,\widetilde{B}_{n-1}\in\Mat_{\ell\times\ell}\mc K[\partial]$,
with $\widetilde{B}_{n-1}$ non-degenerate,
such that
\begin{equation}\label{20130704:eq4}
\text{right l.c.m.}(A_n,B_{n-1})=A_n\widetilde{B}_{n-1}=B_{n-1}\widetilde{A}_n\,,
\end{equation}
and, moreover, 
\begin{equation}\label{20130704:eq2}
\deg(\widetilde{B}_{n-1})=\deg(B_{n-1})
\,,
\end{equation}
since, by minimality of \eqref{20130704:eq3}
$A_n$ and $B_{n-1}$ are left coprime.
Combining equations \eqref{20130704:eq3} and \eqref{20130704:eq4},
we get the following new rational expression for $H$,
with $n-1$ factors:
\begin{equation}\label{20130704:eq5}
H=A_1B_1^{-1}\dots A_{n-2}B_{n-2}^{-1}A_{n-1}\widetilde{A}_n\big(B_n\widetilde{B}_{n-1}\big)^{-1}
\,,
\end{equation}
which is again minimal by \eqref{20130704:eq1} and \eqref{20130704:eq2}.
By the inductive assumption we have:
$$
\xi\assk{\{A_1,B_1,\dots,A_{n-2},B_{n-2},A_{n-1}\widetilde{A}_n,B_n\widetilde{B}_{n-1}\}}{\mc L}P
\,\text{ implies }\,
\xi\assk{\{A,B\}}{\mc L}P
\,,
$$
and we have to prove that
\begin{equation}\label{20130705:eq1}
\xi\assk{\{A_i,B_i\}_{i\in\mc I}}{\mc L}P
\,\text{ implies }\,
\xi\assk{\{A_1,B_1,\dots,A_{n-2},B_{n-2},A_{n-1}\widetilde{A}_n,B_n\widetilde{B}_{n-1}\}}{\mc L}P
\,.
\end{equation}
A solution for the association relation in the left of \eqref{20130705:eq1}
is an $n$-tuple $F_1,\dots,F_n\in\mc L^{\ell}$ such that
\begin{equation}\label{20130705:eq2}
\begin{array}{l}
\displaystyle{
\vphantom{\Big(}
B_nF_n=\xi
\,\,,\,\,\,\,
A_nF_n=B_{n-1}F_{n-1}
\,\,,\,\,\,\,
A_{n-1}F_{n-1}=B_{n-2}F_{n-2}
\,\,,
} \\
\displaystyle{
\vphantom{\Big(}
A_{n-2}F_{n-2}=B_{n-3}F_{n-3}
\,\,,\,\,\dots\,\,,\,\,
A_2F_2=B_1F_1
\,\,,\,\,\,\,
A_1F_1=P
\,,}
\end{array}
\end{equation}
Since $A_n$ and $B_{n-1}$ are left coprime, 
by the second identity in \eqref{20130705:eq2}
and Theorem \ref{CR13},
there exists $\widetilde{F}_{n-1}\in\mc L^{\ell}$
such that
$F_{n-1}=\widetilde{A}_n\widetilde{F}_{n-1}$
and $F_n=\widetilde{B}_{n-1}\widetilde{F}_{n-1}$.
It is then immediate to check that $F_1,\dots,F_{n-2},\widetilde{F}_{n-1}$
is a solution for the association relation in the right of \eqref{20130705:eq1}.


Next, we consider the general case when $N\geq2$.
In this case, we have
$H=H^1+\dots+H^N$, where
\begin{equation}\label{20130704:eq11}
H^\alpha=A^\alpha_1(B^\alpha_1)^{-1}\dots A^\alpha_n(B^\alpha_n)^{-1}
\,\,,\,\,\,\,
\alpha=1,\dots,N
\,.
\end{equation}
By Proposition \ref{20130703:lem3} it follows that
$$
\sdeg(H)\leq\sum_{\alpha\in\mc A}\sdeg(H^\alpha)
\leq\sum_{\alpha\in\mc A}\sum_{i\in\mc I}\deg(B^\alpha_i)
\,.
$$
Hence, 
since, by assumption, \eqref{20130611:eq1} is a minimal rational expression for $H$,
all inequalities above are in fact equalities.
In particular, \eqref{20130704:eq11} is a minimal rational expression for $H^\alpha$
for every $\alpha$, and
\begin{equation}\label{20130704:eq12}
\sdeg(H)=\sdeg(H^1)+\dots+\sdeg(H^N)
\,.
\end{equation}
For every $\alpha\in\mc A$,
let $H^\alpha=A^\alpha(B^\alpha)^{-1}$ be a minimal fractional decomposition for $H^\alpha$,
and let
\begin{equation}\label{20130704:eq13}
\widetilde{B}=B^1C^1=\dots=B^NC^N
\,,
\end{equation}
be the right least common multiple of $B^1,\dots,B^N$.
Thanks to equation \eqref{20130704:eq12},
we can apply Lemma \ref{20130705:lem3} to conclude that
the matrices $B^1,\dots,B^N$ are strongly left coprime,
and that
\begin{equation}\label{20130704:eq13b}
H=(A^1C^1+\dots A^NC^N)\widetilde{B}^{-1}
\,,
\end{equation}
is a minimal fractional decomposition for $H$.
By definition, 
$\{F^\alpha_i\}_{i\in\mc I,\alpha\in\mc A}\subset\mc L^\ell$ 
is a solution for the association relation
$\xi\assk{\{A^\alpha_i,B^\alpha_i\}_{i\in\mc I,\alpha\in\mc A}}{\mc L}P$
if and only if, for every $\alpha\in\mc A$, 
$\{F^\alpha_i\}_{i\in\mc I}$ 
is a solution for the association relation
$\xi\assk{\{A^\alpha_i,B^\alpha_i\}_{i\in\mc I}}{\mc L}A^\alpha_1F^\alpha_1=:P^\alpha$,
and $P^1+\dots+P^N=P$.
Hence,
\begin{equation}\label{20130704:eq14}
\xi\assk{\{A^\alpha_i,B^\alpha_i\}_{i\in\mc I,\alpha\in\mc A}}{\mc L}P
\,\,\text{ if and only if }\,\,
\xi\assk{\{A^\alpha_i,B^\alpha_i\}_{i\in\mc I}}{\mc L}P^\alpha \,,
\end{equation}
for some $P^1,\dots,P^N\in\mc L^\ell$ such that $P^1+\dots+P^N=P$.
On the other hand, by the case $N=1$
we have that, for every $\alpha\in\mc A$,
\begin{equation}\label{20130704:eq15}
\xi\assk{\{A^\alpha_i,B^\alpha_i\}_{i\in\mc I}}{\mc L}P^\alpha
\,\,\text{ if and only if }\,\,
\xi\assk{\{A^\alpha,B^\alpha\}}{\mc L}P^\alpha
\,.
\end{equation}
By definition, the association relation in the right of \eqref{20130704:eq15}
means that there exists $F^\alpha\in\mc L^\ell$ such that
\begin{equation}\label{20130704:eq16}
B^\alpha F^\alpha=\xi
\,\,\text{ and }\,\,
A^\alpha F^\alpha=P^\alpha
\,.
\end{equation}
Since $B^1,\dots,B^N$ are strongly left coprime,
it follows by the first equation in \eqref{20130704:eq16} and Theorem \ref{20130705:thm}
that there exists $F\in\mc L^\ell$ such that
$F^\alpha=C^\alpha F$ for every $\alpha\in\mc A$.
Hence, $\widetilde{B}F=\xi$, and, by by the second equation in \eqref{20130704:eq14}, 
$$
(A^1C^1+\dots+A^NC^N)F=A^1F^1+\dots+A^NF^N=P\,.
$$
In other words, $F\in\mc L^\ell$ is a solution for the association relation
\begin{equation}\label{20130704:eq17}
\xi\assk{\{A^1C^1+\dots+A^NC^N,\widetilde{B}\}}{\mc L}P
\,.
\end{equation}
Since any minimal fractional decompositions for $H$ differ by multiplication
by an invertible $\ell\times\ell$ matrix differential operator,
we thus get $\xi\assk{\{A,B\}}{\mc L}P$, completing the proof.
\end{proof}
\begin{lemma}\label{20130802:prop2}
Let $H\in\Mat_{\ell\times\ell}\mc K(\partial)$,
and let \eqref{20130611:eq1} be a minimal rational expression for $H$.
Then, for every $\xi,P\in\mc K^\ell$,
$$
\xi\assk{\{A^\alpha_i,B^\alpha_i\}_{i,\alpha}}{\mc L}P
\,\text{ implies }\,
\xi\assk{\{A^\alpha_i,B^\alpha_i\}_{i,\alpha}}{\mc K}P
\,.
$$
\end{lemma}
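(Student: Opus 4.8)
The plan is to reduce everything to the fixed minimal fractional decomposition $H=AB^{-1}$ (with $A,B\in\Mat_{\ell\times\ell}\mc K[\partial]$, $B$ non-degenerate), where the B\'ezout identity gives genuine control over the field of definition of solutions, and then to transport a $\mc K$-rational solution back to the rational expression \eqref{20130611:eq1} using the fact that, by minimality, the auxiliary operator relating the two fractional decompositions is actually invertible in $\Mat_{\ell\times\ell}\mc K[\partial]$, not merely non-degenerate.

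First I would fix a minimal fractional decomposition $H=AB^{-1}$ over $\mc K$. Assume $\xi\assk{\{A^\alpha_i,B^\alpha_i\}_{i,\alpha}}{\mc L}P$. By Lemma \ref{20130802:prop1} (applicable since \eqref{20130611:eq1} is minimal) this is equivalent to $\xi\assk{\{A,B\}}{\mc L}P$, i.e.\ there is $F\in\mc L^\ell$ with $BF=\xi$ and $AF=P$. Since $A$ and $B$ are right coprime, the B\'ezout identity (Theorem \ref{th:6.4}(a)) provides $C,D_0\in\Mat_{\ell\times\ell}\mc K[\partial]$ with $CA+D_0B=\id$; applying this to $F$ gives $F=C(AF)+D_0(BF)=CP+D_0\xi\in\mc K^\ell$. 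Hence already $\xi\assk{\{A,B\}}{\mc K}P$, i.e.\ $\xi\ass{H}P$.

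Next I would run the construction from the proof of Lemma \ref{20130802:prop3}, now tracking fields. Applying Lemma \ref{20130801:lem} to each summand of \eqref{20130611:eq1} yields $X^\alpha_i\in\Mat_{\ell\times\ell}\mc K[\partial]$ with $B^\alpha_iX^\alpha_i=A^\alpha_{i+1}X^\alpha_{i+1}$ for $1\le i\le n-1$, so that $H=\sum_\alpha(A^\alpha_1X^\alpha_1)(B^\alpha_nX^\alpha_n)^{-1}$. Let $\widetilde B=B^\alpha_nX^\alpha_nC^\alpha$ be the right l.c.m.\ of the denominators $B^\alpha_nX^\alpha_n$ (with $C^\alpha\in\Mat_{\ell\times\ell}\mc K[\partial]$) and $\widetilde A=\sum_\alpha A^\alpha_1X^\alpha_1C^\alpha$, so $H=\widetilde A\widetilde B^{-1}$. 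The crucial point is a degree count: iterating Lemma \ref{20130703:lem2}(a) through the construction of Lemma \ref{20130801:lem} shows $\deg(X^\alpha_n)\le\deg(B^\alpha_1)+\dots+\deg(B^\alpha_{n-1})$, hence $\deg(B^\alpha_nX^\alpha_n)\le\sum_i\deg(B^\alpha_i)$, and then Lemma \ref{20130705:lem2} gives $\deg(\widetilde B)\le\sum_{i,\alpha}\deg(B^\alpha_i)=\sdeg(H)$, the last equality by minimality of \eqref{20130611:eq1}. Since $\sdeg(H)\le\deg(\widetilde B)$ always holds (Lemma \ref{20130703:lem1}), we conclude $\deg(\widetilde B)=\sdeg(H)=\deg(B)$, so $\widetilde A\widetilde B^{-1}$ is itself a minimal fractional decomposition. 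By Theorem \ref{th:6.4}(b) there is a non-degenerate $D\in\Mat_{\ell\times\ell}\mc K[\partial]$ with $\widetilde A=AD$ and $\widetilde B=BD$; comparing degrees, $\deg(D)=0$, so $D$ is invertible in $\Mat_{\ell\times\ell}\mc K[\partial]$ by Proposition \ref{cor:3.2}(b).

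Finally, set $Z=D^{-1}F\in\mc K^\ell$ (legitimate since $D^{-1}\in\Mat_{\ell\times\ell}\mc K[\partial]$ and $F\in\mc K^\ell$), then $Z^\alpha=C^\alpha Z\in\mc K^\ell$ and $Z^\alpha_i=X^\alpha_iZ^\alpha\in\mc K^\ell$. A direct verification using $\widetilde B=BD=B^\alpha_nX^\alpha_nC^\alpha$, $\widetilde A=AD=\sum_\alpha A^\alpha_1X^\alpha_1C^\alpha$, $BF=\xi$, $AF=P$, and $B^\alpha_iX^\alpha_i=A^\alpha_{i+1}X^\alpha_{i+1}$ shows that $\{Z^\alpha_i\}_{i,\alpha}$ solves the system \eqref{20130611:eq2}, i.e.\ $\xi\assk{\{A^\alpha_i,B^\alpha_i\}_{i,\alpha}}{\mc K}P$. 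I expect the only genuine difficulty to be the degree count establishing $\deg(\widetilde B)=\deg(B)$, which forces $D$ to be invertible over $\mc K[\partial]$ rather than merely non-degenerate; this is precisely the place (beyond the appeal to Lemma \ref{20130802:prop1}) where the minimality hypothesis on \eqref{20130611:eq1} enters, and the rest is routine bookkeeping.
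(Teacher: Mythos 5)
Your proof is correct, but it takes a genuinely different route from the paper. The paper's argument is Galois-theoretic: since the expression is minimal, Corollary \ref{20130801:cor} gives $\mc E=0$, so the solution $\{F^\alpha_i\}$ over $\mc L$ is \emph{unique}; being unique it is fixed by the relevant differential Galois groups, and descent through the Picard--Vessiot tower (plus ordinary Galois theory to get from $\bar{\mc C}\otimes_{\mc C}\mc K$ down to $\mc K$) shows that the entries of the $F^\alpha_i$ themselves already lie in $\mc K$. You avoid Galois theory altogether: you pass to a minimal fractional decomposition $H=AB^{-1}$ via Lemma \ref{20130802:prop1}, use the B\'ezout identity of Theorem \ref{th:6.4}(a) to write $F=CP+D_0\xi\in\mc K^\ell$, and then rerun the construction of Lemma \ref{20130802:prop3} while observing, by a degree count through Lemma \ref{20130801:lem} and Lemma \ref{20130703:lem2}(a) (namely $\deg(X^\alpha_n)\le\deg(B^\alpha_1)+\dots+\deg(B^\alpha_{n-1})$, hence $\sdeg(H)\le\deg(\widetilde B)\le\sum_{i,\alpha}\deg(B^\alpha_i)=\sdeg(H)$), that minimality forces the connecting operator $D$ of Theorem \ref{th:6.4}(b) to have degree zero, hence to be invertible in $\Mat_{\ell\times\ell}\mc K[\partial]$ by Proposition \ref{cor:3.2}(b); this lets you exhibit the explicit $\mc K$-valued solution $Z^\alpha_i=X^\alpha_iC^\alpha D^{-1}F$. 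Both invocations of minimality in your argument (for Lemma \ref{20130802:prop1} and for the degree count) are legitimate, there is no circularity, and the verification of the system \eqref{20130611:eq2} goes through. As for what each approach buys: yours is constructive and elementary, producing an explicit solution over $\mc K$ without Picard--Vessiot theory; the paper's is shorter given that Theorem \ref{20130801:thm} and Corollary \ref{20130801:cor} are already in place, and it yields the slightly stronger conclusion that the unique $\mc L$-solution itself has all entries in $\mc K$, rather than merely the existence of some $\mc K$-solution.
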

\begin{proof}
Let $\{F^\alpha_i\}_{i\in\mc I,\alpha\in\alpha}\subset\mc L$
be a solution for 
$\xi\assk{\{A^\alpha_i,B^\alpha_i\}_{i,\alpha}}{\mc L}P$.
Since, by assumption, the rational expression \eqref{20130611:eq1} is minimal,
we have from Corollary \ref{20130801:cor}
that the space $\mc E$ of solutions for the association relation
$0\assk{\{A^\alpha_i,B^\alpha_i\}_{i,\alpha}}{\mc L}0$
is zero.
Therefore, by linearity, $\{F^\alpha_i\}_{i\in\mc I,\alpha\in\alpha}\subset\mc L$
must be the unique solution for $\xi\assk{\{A^\alpha_i,B^\alpha_i\}_{i,\alpha}}{\mc L}P$.

We want to prove that, in fact, $F^\alpha_i$ lies in $\mc K^\ell$ for every $i,\alpha$.
For this, we shall use some differential Galois theory (see e.g. \cite{PS03}).
Let $\widetilde{\mc K}=\bar{\mc C}\otimes_{\mc C}\mc K$.
By \cite[Lem.5.12(a)]{CDSK13b}, $\widetilde{\mc K}$ is a differential field extension of $\mc K$,
with field of constants $\bar{\mc C}$,
and the linear closure $\mc L$ is obtained as union of  the Picard-Vessiot composita
$\widetilde{\mc K}=\widetilde{\mc K}_0\subset\widetilde{\mc K}_1\subset\dots\subset\mc L$,
see \cite{Mag94}.
Suppose that, for some $i,\alpha$, one of the entries of $F^\alpha_i$ 
does not lie in $\widetilde{\mc K}$.
Then, by \cite[Lem.5.9]{CDSK13b}, there exists $k$
and a Picard-Vessiot extension $\mc P$ of $\mc K_k$
such that, for every $i,\alpha$, all the entries of $F^\alpha_i$ 
lie in $\mc P$, and not all lie in $\mc K_k$.
Clearly, being the unique solution for the association relation
$\xi\assk{\{A^\alpha_i,B^\alpha_i\}_{i,\alpha}}{\mc L}P$
(with all the matrices $A^\alpha_i,B^\alpha_i$ with coefficients in $\mc K$),
the element $(F^\alpha_i)_{i\in\mc I,\alpha\in\mc A}\in\mc P^{\ell nN}$ 
is fixed by the differential Galois group $Gal(\mc P/\mc K_k)$.
Therefore, by \cite[Prop.5.14]{CDSK13b}
all the entries of $F^\alpha_i$ lie in $\mc K_k$, which is a contradiction.
Therefore, $F^\alpha_i\in\widetilde{\mc K}^\ell$ for every $i,\alpha$.
In order to prove that $F^\alpha_i\in\mc K^\ell$ for every $i,\alpha$,
we apply the ordinary Galoise theory.
Clearly, the entries of $F^\alpha_i$, being elements of $\bar{\mc C}\otimes_{\mc C}\mc K$,
lie in a finite Galois extension of $\mc K$.
Again, being the unique solution for the association relation
$\xi\assk{\{A^\alpha_i,B^\alpha_i\}_{i,\alpha}}{\mc L}P$,
$(F^\alpha_i)_{i\in\mc I,\alpha\in\mc A}$ is fixed by the corresponding Galois group,
and therefore all the entries lie in $\mc K$.
\end{proof}
\begin{corollary}\label{20130802:cor}
Let $H\in\Mat_{\ell\times\ell}\mc K(\partial)$,
and let \eqref{20130611:eq1} be a minimal rational expression for $H$.
Then, the association relation
$$
\xi\assk{\{A^\alpha_i,B^\alpha_i\}_{i,\alpha}}{\mc K_1}P
\,,
$$
is independent of the intermediate differential field $\mc K\subset\mc K_1\subset\mc L$.
\end{corollary}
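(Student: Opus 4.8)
The plan is to deduce the statement directly from Lemma \ref{20130802:prop2}, using only the evident monotonicity of the association relation under field extensions. Fix $\xi,P\in\mc K^\ell$ and an intermediate differential field $\mc K\subset\mc K_1\subset\mc L$. The first observation is the trivial chain of implications: any solution $\{F^\alpha_i\}_{i,\alpha}$ of the association relation over a subfield is \emph{a fortiori} a solution over any larger field, so
$$
\xi\assk{\{A^\alpha_i,B^\alpha_i\}_{i,\alpha}}{\mc K}P
\,\Longrightarrow\,
\xi\assk{\{A^\alpha_i,B^\alpha_i\}_{i,\alpha}}{\mc K_1}P
\,\Longrightarrow\,
\xi\assk{\{A^\alpha_i,B^\alpha_i\}_{i,\alpha}}{\mc L}P
\,.
$$

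Next I would invoke minimality of the rational expression \eqref{20130611:eq1}: by Lemma \ref{20130802:prop2} the last relation in the chain implies the first, i.e. $\xi\assk{\{A^\alpha_i,B^\alpha_i\}_{i,\alpha}}{\mc L}P$ implies $\xi\assk{\{A^\alpha_i,B^\alpha_i\}_{i,\alpha}}{\mc K}P$. Combining this with the two implications above, all three association relations --- over $\mc K$, over $\mc K_1$, and over $\mc L$ --- are equivalent; in particular the one over $\mc K_1$ does not depend on the choice of $\mc K_1$, which is exactly the claim. (Equivalently, one may phrase it as: when a solution over $\mc L$ exists it is unique, because minimality forces $\mc E=0$ by Corollary \ref{20130801:cor}, and Lemma \ref{20130802:prop2} shows that unique solution already has entries in $\mc K\subset\mc K_1$.)

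The conceptual point worth stressing is that all the real work has already been done: Lemma \ref{20130802:prop2} rests on Corollary \ref{20130801:cor} (minimality $\iff$ $\mc E=\mc E^*=0$), which guarantees that an $\mc L$-solution, if it exists, is unique, and then on differential and ordinary Galois descent to push its entries down to $\mc K$. Given that lemma, the present corollary introduces no new difficulty; the only thing to notice is that sandwiching $\mc K_1$ between $\mc K$ and $\mc L$ is enough, so there is no genuine obstacle here beyond correctly citing Lemma \ref{20130802:prop2}.
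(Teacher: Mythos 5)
Your proposal is correct and is essentially identical to the paper's own proof: both rely on the trivial monotonicity of the association relation under field extensions together with Lemma \ref{20130802:prop2} (which gives descent from $\mc L$ back to $\mc K$ in the minimal case), yielding the equivalence of the relations over $\mc K$, $\mc K_1$ and $\mc L$. Nothing further is needed.
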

\begin{proof}
By definition of association relation,
if $\mc K_1\subset\mc K_2$,
then 
$\xi\assk{\{A^\alpha_i,B^\alpha_i\}_{i,\alpha}}{\mc K_1}P$ 
implies
$\xi\assk{\{A^\alpha_i,B^\alpha_i\}_{i,\alpha}}{\mc K_2}P$.
Therefore the statement follows immediately from Lemma \ref{20130802:prop2}.
\end{proof}
\begin{proof}[Proof of Theorem \ref{20130704:prop}]
The first assertion of part (a) is an immediate consequence 
of Lemma \ref{20130802:prop4}, Lemma \ref{20130802:prop1},
and Corollary \ref{20130802:cor}.
Part (b) follows from part (a) and Lemma \ref{20130802:prop3}.
\end{proof}



\end{document}